\documentclass[
  sn-mathphys-num,
  colorlinks,
  linkcolor=darkmagenta,
  citecolor=rufous,
  urlcolor=zaffre,
  hypertexnames=false
]{sn-jnl}

\usepackage[hyperpageref]{backref} 

\usepackage{graphicx}%
\usepackage{multirow}%
\usepackage{amsmath,amssymb,amsfonts}%
\usepackage{amsthm}%
\usepackage[title]{appendix}%
\usepackage{xcolor}%
\usepackage{textcomp}%
\usepackage{manyfoot}%
\usepackage{xurl} 
\usepackage{booktabs}%
\usepackage{algorithm}%
\usepackage{algorithmicx}%
\usepackage{algpseudocode}%
\usepackage{listings}%
\usepackage{paralist}
\usepackage[misc]{ifsym}
\usepackage{multicol}
\usepackage{tikz}
\usepackage{makecell}
\usepackage{epsfig}
\usepackage{lmodern}

\usepackage{mathtools} \mathtoolsset{showonlyrefs}

\numberwithin{equation}{section}
\newtheorem{theorem}{Theorem}[section]

\newtheorem{lemma}[theorem]{Lemma}%
\newtheorem{remark}[theorem]{Remark}%

\newtheorem{definition}{Definition}[section]%
\newtheorem{corollary}[theorem]{Corollary}
\newtheorem{assumption}{A}

\begin{document}
\title[Regularity and singularity of the blow-up]{Regularity and singularity of the blow-up curve for a wave equation with a derivative nonlinearity and a scale-invariant damping}

\author[1]{\fnm{Ahmed} \sur{Bchatnia}}\email{ahmed.bchatnia@fst.utm.tn}
\equalcont{These authors contributed equally to this work.}

\author[2]{\fnm{Makram} \sur{Hamouda}}\email{mmhamouda@iau.edu.sa}
\equalcont{These authors contributed equally to this work.}

\author[1]{\fnm{Firas} \sur{Kaabi}}\email{firaskaabi17@gmail.com}
\equalcont{These authors contributed equally to this work.}

\author[3]{\fnm{Takiko} \sur{Sasaki}}\email{t-sasaki@musashino-u.ac.jp}
\equalcont{These authors contributed equally to this work.}

\author*[4]{\fnm{Hatem} \sur{Zaag}}\email{hatem.zaag@math.cnrs.fr}
\equalcont{These authors contributed equally to this work.}

\affil[1]{\orgdiv{Department of Mathematics}, \orgname{Faculty of Sciences of Tunis}, \orgaddress{\street{University of Tunis El Manar, LR Analyse Non-Lin\'eaire et G\'eom\'etrie, LR21ES08}, \city{El Manar 2}, \postcode{2092}, \state{Tunis}, \country{Tunisia}}}

\affil[2]{\orgdiv{Department of Basic Sciences}, \orgname{Deanship of Preparatory Year and Supporting Studies}, \orgaddress{\street{Imam Abdulrahman Bin Faisal University}, \postcode{P.O. Box 1982}, \state{Dammam}, \country{Saudi Arabia}}}

\affil[3]{\orgdiv{Department of Mathematical Engineering}, \orgname{Faculty of Engineering, Musashino University}, \orgaddress{\street{3--3--3 Ariake}, \postcode{Koto--ku}, \state{Tokyo 135--8181}, \country{Japan}}}

\affil*[4]{\orgdiv{Université Sorbonne Paris Nord}, \orgname{LAGA}, \orgaddress{\street{CNRS (UMR 7539)}, \postcode{F--93430}, \state{Villetaneuse}, \country{France}}}

\abstract{
In this article, we investigate the blow-up behavior of solutions to the one-dimensional damped nonlinear wave equation, namely
$$
\partial_t^2 u - \partial_x^2 u + \frac{\mu}{1 + t} \partial_t u = |\partial_t u|^p \quad (p > 1).
$$
Under the assumption of sufficiently large and smooth initial data, we establish that the blow-up curve is continuously differentiable ($\mathcal{C}^1$). A key step in our analysis involves the characterization of the blow-up profile of the solution. The proof relies on transforming the equation into a first-order system and adapting the techniques of  Sasaki in \cite{Sasaki2018,Sasaki2019} which have elegantly extended  the method of Caffarelli and Friedman \cite{Caffarelli1986} to nonlinear wave equations with time derivative nonlinearity, but without the scale-invariant term ($\mu =0$).
}

\keywords{Nonlinear wave equation, linear damping, blow-up curve, }


\pacs[MSC Classification 2020]{
35B40,    	
35B44,    	
35L05,    	
35L51,   	
35L67,    	
35L71    	
}

\maketitle

\section{Introduction}
We consider the nonlinear wave equation with damping:

\begin{equation}
\label{1.1}
\begin{cases}
\partial_t^2 u - \partial_x^2 u + \dfrac{\mu}{1 + t} \partial_t u = |\partial_t u|^p, & x \in \mathbb{R}, \; t > 0, \\
u(x, 0) = u_0(x), & x \in \mathbb{R}, \\
\partial_t u(x, 0) = u_1(x), & x \in\mathbb{R},
\end{cases}
\end{equation}
where $p > 1$ is a constant and the nonlinearity $s \mapsto s^p$ belongs to $\mathcal{C}^4$ for $s \geq 0$.

It is well-known \cite[Theorem 2.3]{HH2021} that the blow-up for \eqref{1.1} occurs, at least for small data, for the following values of the nonlinearity exponent $p$:
\begin{equation}\label{p-range}
    1< p < p_G(1, \mu) := 1 + \frac{2}{\mu}, \quad \mu>0,
\end{equation}
where $p_G(1, \mu)$ denotes the Glassey exponent, which is a good candidate for characterizing the threshold between blow-up and global existence of \eqref{1.1} in dimension $N=1$. Note that for higher dimensions this ``critical'' exponent is given by $\displaystyle p_G(N, \mu) := 1 + \frac{2}{N-1+\mu}$; see e.g. \cite{HH2021}. 
Throughout this article, we assume that condition \eqref{p-range} is met, in line with the objectives of this study.

Let $R^*$ and $T^*$ be arbitrary positive constants. We define the following sets:
\begin{align}
B_{R^*} &:= \{ x \in \mathbb{R} : |x| < R^* \}, \label{eq:ball} \\
K_{-}(x_0, t_0) &:= \{ (x, t) \in \mathbb{R} \times (0, \infty) : |x - x_0| < t_0 - t \}, \label{eq:cone} \\
K_{R^*, T^*} &:= \bigcup_{x \in B_{R^*}} K_{-}(x, T^*). \label{eq:union}
\end{align}
The blow-up time function is defined by
\begin{equation}
T(x) := \sup \{ t \in (0, T^*) : |\partial_t u(x, t)| < \infty \}, \quad x \in B_{R^*}. \label{eq:blowup_time}
\end{equation}
In this paper, we refer to the set 
$$
\Gamma = \{(x, T(x)) : x \in B_{R^*}\},
$$ 
as the \textit{blow-up curve}. For notational convenience, we shall identify $\Gamma$ with the function $T$ itself. \\
This work has two primary objectives: first, to establish the continuous differentiability ($\mathcal{C}^1-$regularity) of $T$ for appropriately chosen initial conditions. We now review relevant analytical results from prior studies of blow-up curves in nonlinear wave equations. Most existing research has focused on equations of the form
$$
\partial_t^2 u - \partial_x^2 u = F(u), \quad x \in \mathbb{R},\ t > 0,
$$
with the associated blow-up curve defined by
$$
\hat{T}(x) = \sup \{ t \in (0, T^*) : |u(x,t)| < \infty \}.
$$  
We emphasize that our definition of the blow-up curve \eqref{eq:blowup_time} differs from these classical formulations. The starting point of this line of research is the work of Caffarelli and Friedman \cite{Caffarelli1985,Caffarelli1986}, who studied the nonlinear wave equation $\square u = u^{p}$ and proved that, under suitable assumptions on the data, the blow-up set is a $\mathcal{C}^{1}$ space-like surface, which reduces to a $\mathcal{C}^{1}$ blow-up curve $\hat T$ in  one dimensional space. Their approach relies on the positivity of the fundamental solution for the linear wave operator and on a comparison to the associated ODE $w'' = w^{p}$. This yields  bounds on $u$ and its derivatives in terms of the distance to the blow-up boundary. A blow-up scaling combined with the maximum principle then shows that the limiting blow-up cones are flat, and this geometrical property translates into the $\mathcal{C}^{1}-$regularity of $\hat T$.

Later, Godin \cite{Godin2002} studied the one–dimensional Liouville equation $u_{tt}-u_{xx}=e^{u}$ and gave a detailed description of the $\mathcal{C}^\infty$ and analytic regularity of the blow-up curve. Building on the earlier $\mathcal{C}^1-$theory, he showed that the blow-up curve is actually $\mathcal{C}^\infty$ and that, for mixed boundary problems, its analyticity near the boundary depends, in a subtle and discrete way, on the choice of boundary conditions. In the presence of Dirichlet boundary conditions, a refined description of the blow-up curve for one-dimensional nonlinear wave equations was obtained by Ishiwata and Sasaki \cite{IshiwataSasakiDirichlet2020}. The study of blow-up curves was extended to systems of nonlinear wave equations by Uesaka \cite{Uesaka2009}.
 More recently, Sasaki revisited this program for the derivative-type nonlinearity
$\square u = |u_t|^{p}$  in a series of works  \cite{Sasaki2018,Sasaki2019}.
Building on the ideas of Caffarelli and Friedman, Sasaki introduced the characteristic–ODE method, which leads to a modified dynamical
system along the rays $x \pm t=c$ and allows the study of the blow-up geometry in this new setting. In addition, for semilinear equations of the form $\square u = F(u)$, Sasaki established convergence and further regularity properties of the blow-up curve in \cite{SasakiConvergence2021,SasakiRegularity2022}.

In contrast to these smoothness results, Merle and Zaag \cite{Merle2005,Merle2012} demonstrated that singular points can occur in blow-up curves, revealing the more complex behavior possible in these systems. We also point out recent refinements of the blow-up rate for related semilinear wave equations, both in the non-scaling invariant and in the superconformal regimes; see Hamza and Zaag \cite{HamzaZaagNonScaling2021,HamzaZaagSuperconformal2025}. In addition, a number of related blow-up and lifespan results have been obtained for damped wave-type models with derivative or mixed nonlinearities. 
For weakly coupled wave equations with scale-invariant damping and derivative-type terms, sharp blow-up and lifespan estimates were derived  in Hammouda and Hamza works; see \cite{MH2,MH4}. Their works also extend to the generalized Tricomi equation with mixed nonlinearities where 
Blow-up and lifespan estimates were established in \cite{MH55}, while \cite{HHP2} treats a damped wave equation in the Einstein--de Sitter spacetime with a derivative-type nonlinearity. 
These works further highlight how time-dependent damping and derivative nonlinearities influence the blow-up and the lifespan, and they are complementary to our present focus on the geometry and regularity of the blow-up curve.

As mentioned above, several results have been established on $\hat{T}$ when there are no nonlinear terms involving the derivative of the solution. On the other hand, to the best of our knowledge, only two results have been found concerning $\hat{T}$ with nonlinear terms involving the derivative of the solution.

Ohta and Takamura \cite{Ohta1998} investigated the nonlinear wave equation
\begin{equation}
\partial_t^2 u - \partial_x^2 u = (\partial_t u)^2 - (\partial_x u)^2, \quad x \in \mathbb{R},\ t \in \mathbb{R}.
\label{eq:Ohta}
\end{equation}
The above equation admits an exact linearization through the transformation $v(x,t) = \exp\{-u(x,t)\}$, \textit{i.e.}, $u(x,t) = -\log v(x,t)$, which reduces it to the classical wave equation $\partial_t^2 v - \partial_x^2 v = 0$. Although this linearization enables a complete analysis of the blow-up curve for \eqref{eq:Ohta}, this approach is not applicable to our main equation \eqref{1.1}.

A more general framework was developed by Hamza and Zaag \cite{HamzaZaag2013} for equations of the form,
$$
\partial_t^2 u - \partial_x^2 u = |u|^{p-1}u + F_1(u) + F_2\left(x, t, \partial_x u, \tfrac{x}{|x|} \cdot \partial_t u\right), \quad x \in \mathbb{R},\ t \in \mathbb{R},
$$
where $F_1 \in \mathcal{C}^1(\mathbb{R},\mathbb{R})$ and $F_2 \in \mathcal{C}^1(\mathbb{R}^4,\mathbb{R})$ satisfy specific growth conditions. First, $F_1$ satisfies $|F_1(u)| \leq M(1 + |u|^q)$ for some $M > 0$ and $q < p$. Second, $F_2$ satisfies $|F_2(x,t,v,z)| \leq M(1 + |v| + |z|)$ for some $M > 0$. The techniques from \cite{HamzaZaag2013} cannot be directly applied to \eqref{1.1} because our nonlinear term involving $\partial_t u$ fails to satisfy global Lipschitz conditions.

Regarding maximal existence time for \eqref{1.1}, Haraux \cite{Haraux1992} established the existence of unbounded global solutions, while Messikh \cite{Messikh2011} proved that for any $T > 0$, there exist initial data inducing blow-up at $t^* \leq T$. For related blow-up results in the one-dimensional quasilinear setting, we refer to Haruyama and Takamura \cite{HaruyamaTakamuraQuasilinear2025}.
However, the blow-up curve for \eqref{1.1} remains uncharacterized in the literature.  Now, we are considering a new program, where we will be closely following Takiko’s work, with more adaptations.

Unlike the case of the semilinear wave equation $\Box u = |u|^{p-1}u$, we have no variational structure in our PDE. For that reason, the strategy developed by Merle and Zaag in a series of papers \cite{MZajm03, MZimrn05, MZjfa07, MZcmp08} and which extensively uses energy estimates breaks down in our case. Fortunately, we could successfully implement in our case the strategy of Caffarelli and Friedman in \cite{Caffarelli1985} and \cite{Caffarelli1986} and its adaptation by Sasaki in \cite{Sasaki2018, Sasaki2019}, which does not rely on energy estimates.

The proof strategy which led to our primary results is reasoned out in the following manner. The method we utilized is inspired by the works of \cite{Caffarelli1985, Caffarelli1986, Sasaki2019}. To begin with, there is a change of variables such that the equation is expressed in a suitable way in terms of the characteristics. In the absence of damping, this amounts to treating the problem as a system of ordinary differential equations along the characteristic rays 
$x\pm t=c$, which is the starting point of Sasaki's work \cite{Sasaki2018, Sasaki2019}. By this method, we are able to derive an associated system of ODEs that captures the geometry of the blow-up curve in terms of the dynamics of an effective one-dimensional model.

At this juncture, a number of steps follow closely on Sasaki's proof-steps; the most significant one being the characteristic reformulation. The major obstacle, however, is the damping term with coefficient $\mu$, which besides complicating the situation, adds more terms. As a result, the system that we obtain is dissimilar to that in \cite{Sasaki2019}; in particular, the equations along the rays receive an extra $\mu-$dependent term that not only destroys but also complicates the comparison arguments (see e.g. system~\eqref{1.5} below).

In order to overcome this barrier, we put forward some additional structural conditions (see \eqref{condgam}, A\ref{A4}, and A\ref{A5} below) and we resort to a number of technical reductions at different stages of the analysis. The use of these tools permits us, whenever the need arises, to recreate a scenario that is still comparable to the undamped case considered by Sasaki in \cite{Sasaki2019}.

The novel contributions of this paper are twofold. First, we prove $\mathcal{C}^1$ regularity of the blow-up curve for \eqref{1.1}. Second, as a crucial intermediate result, we identify and characterize the blow-up profile of solutions. In fact, we show that the blow-up profile for $u_t$ is approximately of the form
$$
\partial_t u(x,t) \sim \left[ \frac{1}{(p-1)(T-t)} \right]^{1/(p-1)} \quad \text{(main singularity)}.
$$
modulated by a function of $x$ and $t$, but the time-dependent damping $\frac{\mu}{1+t}$ is slowly varying, so near blow-up time $T$ it is roughly constant $\approx \frac{\mu}{1+T}$, thus the blow-up rate is the same as in the constant damping case: $(T-t)^{-1/(p-1)}$.

The spatial profile is expected to be approximately self-similar of the second kind (or peaking) if $\mu$ small, but for large $\mu$ it may be flatter.

The remainder of this article is organized as follows. Section \ref{sec2} presents preliminary computations and establishes key technical lemmas. In Section \ref{sec3}, we state and prove our main result on the $\mathcal{C}^1$ regularity of the blow-up curve. Section \ref{sec4} is devoted to the existence and regularity of solutions to our transformed system, while Section \ref{sec5} analyzes the blow-up rates and establishes Lipschitz continuity of the blow-up curve. The blow-up limits and their properties are examined in Section \ref{sec6}, leading to the proof of differentiability in Section \ref{sec7}. Finally, Section~\ref{section8} establishes the $\mathcal{C}^1$-regularity of the blow-up curve.

\section{Preliminary computations}\label{sec2}
 We transform the original equation \eqref{1.1} into a first-order system that eliminates derivatives from the nonlinear terms, adapting the characteristic method of Caffarelli and Friedman \cite{Caffarelli1985,Caffarelli1986}. Introducing the Riemann invariants:

\begin{equation}
\phi = \partial_t u + \partial_x u, \quad \psi = \partial_t u - \partial_x u,
\end{equation}
we obtain the equivalent system:
\begin{equation}
\label{1.5}
\begin{cases}
D_- \phi = 2^{-p}|\phi + \psi|^p - \dfrac{\mu}{1 + t} \dfrac{\phi + \psi}{2}, & x \in \mathbb{R}, t > 0, \\
D_+ \psi = 2^{-p}|\phi + \psi|^p - \dfrac{\mu}{1 + t} \dfrac{\phi + \psi}{2}, & x \in \mathbb{R}, t > 0, \\
\phi(x, 0) = f(x), \quad \psi(x, 0) = g(x), & x \in \mathbb{R},
\end{cases}
\end{equation}
where the differential operators $D_- = \partial_t - \partial_x$ and $D_+ = \partial_t + \partial_x$ represent characteristic derivatives, and the initial data is given by $f = u_1 + \partial_x u_0$, $g = u_1 - \partial_x u_0$. This formulation moves the nonlinearity from the derivative term $\partial_t u$ in \eqref{1.1} to the undifferentiated terms $\phi$ and $\psi$, while maintaining the damping effect through the coefficient $\mu/(1+t)$.

Consider the coupled ODE system for $(\hat{\phi}, \hat{\psi})$:

\begin{equation}
\label{1.6}
\begin{cases}
\dfrac{d\hat{\phi}}{dt} = 2^{-p}|\hat{\phi} + \hat{\psi}|^p - \dfrac{\mu}{2}(\hat{\phi} + \hat{\psi}), & t > 0, \\[8pt]
\dfrac{d\hat{\psi}}{dt} = 2^{-p}|\hat{\phi} + \hat{\psi}|^p - \dfrac{\mu}{2}(\hat{\phi} + \hat{\psi}), & t > 0, \\[8pt]
\hat{\phi}(0) = \gamma_1, \quad \hat{\psi}(0) = \gamma_2.
\end{cases}
\end{equation}
The behavior of solutions to our PDE system \eqref{1.5} can be understood through analysis of the simpler ODE system \eqref{1.6}, which captures the essential nonlinear dynamics. The following lemma establishes the critical threshold for finite-time blow-up in this reduced system:
\begin{lemma}[Finite-time blow-up]\label{lem21}
For initial data satisfying $\gamma_1 + \gamma_2 > A$, where $A = 2\mu^{1/(p-1)}$, the solution of \eqref{1.6} exhibits a finite-time blow-up, that is, there exists $T_1 = T_1(\gamma_1,\gamma_2) > 0$ such that $$\lim\limits_{t\to T_1^-} (\hat{\phi}(t) + \hat{\psi}(t)) = \infty.$$
\end{lemma}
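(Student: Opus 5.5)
The plan is to reduce \eqref{1.6} to a single scalar ODE for the sum $w := \hat\phi + \hat\psi$. Adding the two equations gives
\begin{equation}
w' = 2^{1-p}|w|^p - \mu w, \qquad w(0) = \gamma_1+\gamma_2 =: w_0 .
\end{equation}
The difference $\hat\phi-\hat\psi$ is constant, so each of $\hat\phi$ and $\hat\psi$ equals $\tfrac12 (w \pm (\gamma_1-\gamma_2))$. The right-hand side $F(w)=2^{1-p}|w|^p-\mu w$ is locally Lipschitz, so local existence and uniqueness hold. Moreover, blow-up of $(\hat\phi,\hat\psi)$ is equivalent to blow-up of $w$.

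For $w>0$ we have $F(w)=w(2^{1-p}w^{p-1}-\mu)$, whose positive zero is $w_*=2\mu^{1/(p-1)}=A$. Since $w_0>A$, we get $F(w_0)>0$. Because $w\equiv A$ is a stationary solution, uniqueness prevents $w$ from ever crossing down to $A$. Hence $w(t)>A$ and $w'>0$ for as long as the solution exists, so $w$ is increasing and $w(t)\ge w_0$.

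To obtain finite-time blow-up, I would use separation of variables. On the maximal interval,
\begin{equation}
t = \int_{w_0}^{w(t)} \frac{ds}{2^{1-p}s^p-\mu s}.
\end{equation}
The integrand is positive and continuous on $[w_0,\infty)$. Since $\mu s\le \mu s\, (s/w_0)^{p-1}$ for $s\ge w_0$, it is bounded by $\bigl(c\, s^{p}\bigr)^{-1}$ with $c=2^{1-p}-\mu w_0^{1-p}>0$; this positivity is exactly the condition $w_0>A$. Because $p>1$, the integral converges, which gives
\begin{equation}
T_1 := \int_{w_0}^{\infty}\frac{ds}{2^{1-p}s^p-\mu s}<\infty .
\end{equation}
If $w$ stayed bounded on $[0,T_1]$, it would extend past $T_1$, and letting $w\to$ its limit in the formula would give a contradiction. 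Since $w$ is monotone and cannot converge to a finite value greater than $A$ (there $F>0$), we conclude $w(t)\to\infty$ as $t\to T_1^-$.

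An equivalent route avoids the integral: compare with $z'=c z^p$, $z(0)=w_0$. The inequality $w'\ge c w^p$ holds once we know $w\ge w_0$, giving $w\ge z$ and blow-up no later than $\frac{w_0^{1-p}}{c(p-1)}$.

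The only mildly delicate point is the invariance $w(t)>A$, equivalently keeping the constant $c$ positive along the trajectory. Uniqueness and the monotonicity argument above settle this, so I expect no real obstacle. As a by-product, the formula for $T_1$ also records an explicit dependence of the blow-up time on $\gamma_1+\gamma_2$.
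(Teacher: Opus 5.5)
Your proof is correct. It shares the paper's first step, reducing \eqref{1.6} to the scalar equation $y' = 2^{1-p}y^p - \mu y$ for the sum, but it proves finite-time blow-up by a different mechanism.

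The paper treats this as a Bernoulli equation. The substitution $v = y^{1-p}$ makes it linear, which gives the explicit solution $v(t) = 2^{1-p}/\mu + \bigl(w_0^{1-p} - 2^{1-p}/\mu\bigr)e^{(p-1)\mu t}$. The blow-up time is the zero of $v$, namely $T_1 = \frac{1}{(p-1)\mu}\ln\frac{2^{1-p}/\mu}{2^{1-p}/\mu - w_0^{1-p}}$.

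You instead show that $\{w > A\}$ is invariant, using uniqueness and the stationary solution $w \equiv A$. You then use the lower bound $F(s) \ge c\,s^p$ for $s \ge w_0$ to get a finite quadrature, or alternatively compare with $z' = c z^p$. Your integral $\int_{w_0}^\infty ds/F(s)$ can be evaluated with $r = s^{p-1}$ and partial fractions, and it gives exactly the paper's logarithm. So you lose nothing quantitatively.

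The paper's route gives the closed-form solution with no estimates. Your route has two advantages:
\begin{itemize}
\item It justifies the positivity of the sum rather than assuming it. The paper silently replaces $|y|^p$ by $y^p$ and works with $y^{1-p}$, both of which need $y > 0$. This can be repaired after the fact by observing that the explicit solution stays positive on $[0,T_1)$ and invoking uniqueness.
\item It does not depend on the equation being exactly solvable. It applies to any right-hand side that is positive beyond the initial value and satisfies $\int^\infty ds/F(s) < \infty$.
\end{itemize}
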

\begin{proof}
The proof proceeds through careful analysis of the summed ODEs. First, adding both equations in \eqref{1.6} yields:

$$
\frac{d}{dt}(\hat{\phi} + \hat{\psi}) = 2^{1-p}|\hat{\phi} + \hat{\psi}|^p - \mu(\hat{\phi} + \hat{\psi}).
$$

Letting $y(t) = \hat{\phi}(t) + \hat{\psi}(t)$, we obtain the Bernoulli equation:

\begin{equation}
\frac{dy}{dt} = 2^{1-p}y^p - \mu y.
\label{bern_eq}
\end{equation}
The substitution $v = y^{1-p}$ converts \eqref{bern_eq} into a linear equation. A simple differentiation gives
$$
\frac{dv}{dt} = (1-p)y^{-p}\frac{dy}{dt} = (1-p)(2^{1-p} - \mu y^{1-p}).
$$
Since $v = y^{1-p}$, this becomes:
$$
\frac{dv}{dt} + (1-p)\mu v = (1-p)2^{1-p}.
$$
The integrating factor method yields the solution:
$$
v(t) = \frac{2^{1-p}}{\mu} + Ce^{-(1-p)\mu t},
$$
where the constant $C$ is determined by initial conditions. Substituting back for $y$, we obtain
$$
y(t) = \left[ \left( (\gamma_1+\gamma_2)^{1-p} - \frac{2^{1-p}}{\mu} \right) e^{-(1-p)\mu t} + \frac{2^{1-p}}{\mu} \right]^{\frac{1}{1-p}}.
$$
The blow-up occurs when the denominator vanishes, giving the finite blow-up time:
$$
T_1 = \frac{1}{(p-1)\mu} \ln\left( \frac{2^{1-p}}{\mu\left[\frac{2^{1-p}}{\mu} - (\gamma_1+\gamma_2)^{1-p}\right]} \right).
$$

This completes the proof, establishing that $y(t) \to +\infty$ as $t \to T_1^-$ when $\gamma_1 + \gamma_2 > 2\mu^{1/(p-1)}$.
\end{proof}

Now, we analyze the characteristic structure of the damped wave equation:
\begin{equation} \label{eq:main}
    \partial_t^2 \phi - \partial_x^2 \phi + \frac{\mu}{1 + t} \partial_t \phi = 0.
\end{equation}

Introducing $u = \partial_t \phi$ and $v = \partial_x \phi$, we obtain:
\begin{align}
    \partial_x u - \partial_t v &= 0 \label{eq:ux-vt} \\
    \partial_t u - \partial_x v &= -\frac{\mu}{1 + t} u \label{eq:system}
\end{align}
The above system has the following symmetric hyperbolic form:
\begin{equation} \label{eq:matrixform}
\partial_t U + A \partial_x U + B U = 0, \quad 
U = \begin{pmatrix} u \\ v \end{pmatrix}, \quad
A = \begin{pmatrix} 0 & -1 \\ -1 & 0 \end{pmatrix}, \quad
B = \begin{pmatrix} \frac{\mu}{1 + t} & 0 \\ 0 & 0 \end{pmatrix}.
\end{equation}

A straightforward characteristic analysis, using the transformation $V = \left((1+t)^\mu u, v \right)^t$, yields the following:
\begin{equation} \label{eq:transformed}
\partial_t V + A \partial_x V = 0.
\end{equation}
Finally, we conclude that the characteristic slopes, coming from $\det(A - \alpha I) = \alpha^2 - 1 = 0$, gives two characteristic families:
\begin{itemize}
    \item Right-going characteristics: $x - t = \text{constant}$ ($\alpha=+1$).
    \item Left-going characteristics: $x + t = \text{constant}$ ($\alpha=-1$).
\end{itemize}
In fact, the damping term $\frac{\mu}{1+t}u_t$ affects only the wave amplitudes along these characteristics.

\section{Main results}\label{sec3}
For our analysis, we consider positive constants $\gamma_1, \gamma_2$ satisfying
\begin{equation}
\label{condgam}
\gamma_1 + \gamma_2 \;>\; \max\!\left(1,\; (\mu\,p\,2^{p})^{\frac{1}{p-1}}\right).
\end{equation}

Throughout this article, we shall assume the following assumptions:
\begin{assumption}\label{A1}
$T_1 < T^*$.
\end{assumption}

\begin{assumption}\label{A2} $f \geq \gamma_1$ and $g \geq \gamma_2$ in $B_{R^* + T^*}$.
\end{assumption}

\begin{assumption}\label{A3} $f, g \in \mathcal{C}^4(\overline{B_{R^* + T^*}})$.
\end{assumption}

\begin{assumption}\label{A4} There exists $\varepsilon_0 > 0$ such that
$$
2^{-p}(\gamma_1 + \gamma_2)^p - \frac{\mu}{2}(\gamma_1 + \gamma_2) \geq (2 + \varepsilon_0) \max_{x \in B_{R^*+T^*}} (|f^\prime(x)| + |g^\prime(x)|).
$$
\end{assumption}
\begin{assumption}\label{A5} There exists $\varepsilon_1 \neq -1$ verifying
\(
2^{-p}(2p-1)\!\left(1+\frac{\varepsilon_1}{2(1+\varepsilon_1)}\right)\!\left(1-\frac{1}{2p}\right)
-2^{-p+1}p-\mu >0
\)
such that
$$
2^{-p}(\gamma_1 + \gamma_2)^p - \frac{\mu}{2}(\gamma_1 + \gamma_2) 
\;\geq\; (2+\varepsilon_1)\max_{x \in B_{R^*+T^*}} \bigl(|f^\prime(x)| + |g^\prime(x)|\bigr).
$$
\end{assumption} 

Before presenting our main results, we provide some remarks on the hypotheses introduced above.
\begin{remark}
These assumptions are consistent and can be satisfied through the following procedure:
\begin{enumerate}
    \item Choose $\gamma_1, \gamma_2 > 0$ with $\gamma_1 + \gamma_2 > 2\mu^{1/(p-1)}$ (satisfying A\ref{A1}).
    \item Select $f,g \in \mathcal{C}^4(\overline{B_{R^* + T^*}})$ with $f \geq \gamma_1$, $g \geq \gamma_2$ (satisfying A\ref{A2}--A\ref{A3}).
    \item Adjust $\gamma_1, \gamma_2$ if needed to satisfy A\ref{A5} while maintaining A\ref{A1}.
    \item Modify $f,g$ to preserve A\ref{A2} without changing their derivative bounds.
\end{enumerate}
This ensures all conditions A\ref{A1}-A\ref{A5} hold simultaneously.
\end{remark}
Having established the preliminary framework, we now present our central findings. The first and most important result characterizes the blow-up curve.

\begin{theorem}[Existence and regularity of blow-up curve]
\label{thm31}
Let $R^*$ and $T^*$ be arbitrary positive numbers. Under the assumptions \ref{A1}-$(A_5)$, there exists a unique function $T \in \mathcal{C}^1(B_{R^*})$ with $0 < T(x) < T^*$ for all $x \in B_{R^*}$, and a unique solution $(\phi, \psi) \in (\mathcal{C}^{3,1}(\Omega))^2$ to \eqref{1.5} satisfying
\begin{equation}
\phi(x,t), \psi(x,t) \to \infty \quad \text{as } t \nearrow T(x),
\label{eq:blowup}
\end{equation}
for each $x \in B_{R^*}$, where $\Omega = \{(x,t) \in \mathbb{R}^2 : x \in B_{R^*}, 0 < t < T(x)\}$.
\end{theorem}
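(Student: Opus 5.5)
The proof follows the Caffarelli--Friedman scheme as adapted by Sasaki \cite{Sasaki2018,Sasaki2019}, carried out in the steps announced in the outline (Sections \ref{sec4}--\ref{section8}).

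\textbf{Step 1: local existence, positivity and monotonicity.} Local existence of a classical solution of \eqref{1.5} in $\mathcal{C}^{3,1}$ follows from a Picard iteration along characteristics, using the $\mathcal{C}^4$ regularity of $f,g$ (A\ref{A3}) and of $s\mapsto s^p$ for $s\ge 0$. Integrating $D_-\phi$ and $D_+\psi$ along the rays $x\pm t=c$, one shows that $\phi\ge\gamma_1$ and $\psi\ge\gamma_2$ persist. The reason is that, once $\phi+\psi>(\mu p 2^p)^{1/(p-1)}$ (see \eqref{condgam}), the right-hand side $2^{-p}s^p-\frac{\mu}{1+t}\frac{s}{2}$ is positive and increasing in $s=\phi+\psi$. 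A comparison argument with the ODE \eqref{1.6} then gives $\phi+\psi\ge \hat\phi+\hat\psi$ on characteristic triangles; the key point is that $\frac{\mu}{1+t}\le\mu$, so the PDE damping is weaker than the ODE damping. Lemma~\ref{lem21} and A\ref{A1} then force blow-up before $T_1<T^*$. Uniqueness of $(\phi,\psi)$ follows from local Lipschitz estimates on $\Omega$. Finite speed of propagation makes $T(x)$ well defined, with $0<T(x)\le T_1<T^*$.

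\textbf{Step 2: derivative control and Lipschitz continuity of $T$.} Differentiate \eqref{1.5} in $x$ and estimate $\partial_x\phi$ and $\partial_x\psi$ against $D_\pm\phi$ and $D_\pm\psi$. Assumption A\ref{A4} says the initial forcing dominates $(2+\varepsilon_0)\max(|f'|+|g'|)$. This lets one propagate inequalities of the form $|\partial_x\phi|+|\partial_x\psi|\le (1-\delta)(\partial_t\phi+\partial_t\psi)$ by a continuity/maximum-principle argument along characteristics. These inequalities imply that the level sets of $\phi+\psi$ are uniformly spacelike, so $T$ is Lipschitz with $|T'|\le 1-\delta$. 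In the same step, comparison with the explicit ODE solution gives the blow-up rate
\[
c\,(T(x)-t)^{-1/(p-1)}\le \phi+\psi \le C\,(T(x)-t)^{-1/(p-1)}.
\]
Here the damping is lower order near the blow-up time, since $\frac{\mu}{1+t}(\phi+\psi)\ll(\phi+\psi)^p$.

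\textbf{Step 3: blow-up limits, differentiability and $\mathcal{C}^1$ (the main obstacle).} Fix $x_0$ and rescale
\[
\phi_\lambda(y,s)=\lambda^{1/(p-1)}\phi(x_0+\lambda y,\,T(x_0)+\lambda s),
\]
and similarly $\psi_\lambda$. In the rescaled equation the damping term carries a factor $\lambda^{p/(p-1)}\cdot\lambda^{-1/(p-1)}\to 0$, so the limits solve the undamped system with the profile $\bigl[(p-1)(T-t)\bigr]^{-1/(p-1)}$ up to constants. The rescaled blow-up curves $T_\lambda(y)=\lambda^{-1}(T(x_0+\lambda y)-T(x_0))$ are uniformly Lipschitz by Step 2. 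Any limit is therefore Lipschitz, and the main task is to prove that it is linear; this is Caffarelli--Friedman's ``flat cone'' property. I expect this to be the hardest step, because the damping destroys the exact monotonicity used by Sasaki at finite $\lambda$. The plan is to use A\ref{A5}, whose coefficient inequality $2^{-p}(2p-1)(\dots)-2^{-p+1}p-\mu>0$ is tailored to absorb the $\mu$-term in the differentiated equations. With it, I would derive a one-sided estimate (a convexity or maximum-principle bound on $\partial_x\phi/\partial_t\phi$) that survives the limit. One then shows that the limiting curve has equal one-sided slopes, which gives differentiability of $T$ at every $x_0$. Finally, continuity of $T'$ follows from uniqueness of the blow-up limit together with compactness: if $x_n\to x_0$, the limits at $x_n$ converge to the limit at $x_0$, hence $T'(x_n)\to T'(x_0)$. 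This yields $T\in\mathcal{C}^1(B_{R^*})$ and completes \eqref{eq:blowup}.
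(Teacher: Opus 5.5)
Your Steps 1 and 2 follow the paper closely: monotone iteration along characteristics, comparison with \eqref{1.6} using $\mu/(1+t)\le\mu$, the gradient cone propagated from A\ref{A4}, the two-sided rate, and the Lipschitz bound on $T$. Your observation that the rescaled damping term carries a factor $\lambda$ is also correct.

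The gap is Step 3, which is where the $\mathcal{C}^1$ claim actually lives: you never say what forces a blow-up limit $T_0$ to be linear. ``A one-sided estimate on $\partial_x\phi/\partial_t\phi$ that survives the limit'' is not a mechanism. A bound on that ratio is first-order information, already supplied by A\ref{A4}. It only makes $T_0$ Lipschitz with constant $<1$, which does not distinguish a flat limit from a cone such as $T_0(y)=-|y|/2$.

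You have also misplaced the role of A\ref{A5}. In the paper it is not used for flatness. It closes the induction for the scale-invariant higher-order bounds $|D_\theta^\alpha\phi|,|D_\theta^\alpha\psi|\le\lambda_\alpha(\phi+\psi)^{p+(\alpha-1)/q}$, $\alpha=2,3$ (Lemma \ref{lem:4.1}). These bounds make $\{\phi_\lambda,\psi_\lambda\}$ precompact in $\mathcal{C}^3_{\mathrm{loc}}$, so that the equations and the signs of directional derivatives pass to the limit. Since the damping vanishes in the limit, the flatness argument itself is Sasaki's undamped one and needs no absorption of $\mu$.

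What is missing is a convexity-plus-classification argument on the limit. Set $w=v_\phi+v_\psi$ and use null coordinates $u=s+y$, $v=s-y$. Then $\partial_u w,\partial_v w\ge0$ (from the gradient cone) and $\partial_{uv}w=\tfrac14(\partial_s^2-\partial_y^2)w=2^{-p-1}p\,w^{p-1}\partial_s w\ge0$ (from the undamped limit system). Hence every sublevel set $\{w\le M\}$ is convex and $T_0$ is concave (Lemma \ref{lem:5.2}). Concavity makes $T_0(\lambda y)/(\lambda y)$ monotone in $\lambda$, so the blow-down of $T_0$ is a two-slope cone (Lemma \ref{lemma:5.3}). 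The paper then translates along the edge $s=\beta y$ and compares with the explicit self-similar solutions $V_{\cdot,\beta}=C_{\cdot,\beta}(\beta y-s)^{-q}$ (Lemmas \ref{lem5.5}--\ref{lem7.8}). This identifies the limit and forces $\alpha=\beta$, giving Theorem \ref{th:3.6}. Concavity alone would not suffice; this Liouville-type step is what excludes the cone.

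Even linearity of one limit does not yet give differentiability. Different sequences $\lambda_n\to0$ may a priori yield different slopes, since the cluster set of $T_\lambda(1)$ as $\lambda\to0$ is an interval. Likewise, your ``uniqueness of blow-up limits plus compactness'' argument for continuity of $T'$ interchanges the limits $x_n\to x_0$ and $\lambda\to0$ without a uniform estimate. The paper handles both points through the sign of directional derivatives. $D_\theta V_{\phi,\alpha}$ is a positive multiple of $\sin(\theta-\theta_\alpha)$, so $\mathcal{C}^1_{\mathrm{loc}}$ convergence gives $D_\theta\phi>0$ near the blow-up point for $\theta$ in a sector determined by the slope. Two slopes differing by more than $2\varepsilon$ would then give $D_{\tilde\theta}\phi>0$ and $D_{\tilde\theta+\pi}\phi=-D_{\tilde\theta}\phi>0$ on a common region (Section \ref{section8}), a contradiction.
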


\begin{remark}
\rm The system \eqref{1.5} is equivalent to the original equation \eqref{1.1} through the transformation
\begin{equation}
    \label{equiv}
u(x,t) = u_0(x) + \frac{1}{2}\int_0^t (\phi + \psi)(x,s) ds,
\end{equation}
and the blow-up conclusion \eqref{eq:blowup} implies that $\partial_t u(x,t) \to \infty$ as $t \nearrow T(x)$. Furthermore, thanks to \eqref{equiv}, the next results, in Theorems \ref{thm3.3}, \ref{thm3.2},  \ref{th:3.6} and \ref{thm3.4} below, apply as well for $u$, the solution of \eqref{1.1}.
\end{remark}

Building on Theorem \ref{thm31}, we establish additional properties of the blow-up curve.
\begin{theorem}[Blow-up rates]
\label{thm3.3}
Let $\phi$ and $\psi$ solutions of \eqref{1.5}. Then, under the assumptions A\ref{A1}--A\ref{A4}, there exist positive constants $C_1, C_2 > 0$ (depending only on $p$ and $\varepsilon_0$) such that the following estimates hold throughout $\Omega$ (defined by \eqref{omega}) with $q = \frac{1}{p-1}$:
\begin{align}
C_1 (\phi + \psi)^p(x,t) &\leq \partial_t \phi(x,t) \leq C_2 (\phi + \psi)^p(x,t), \label{aa} \\
C_1 (T(x) - t)^{-q-1} &\leq \partial_t \phi(x,t) \leq C_2 (T(x) - t)^{-q-1}, \label{bb} \\
C_1 (\phi + \psi)^p(x,t) &\leq \partial_t \psi(x,t) \leq C_2 (\phi + \psi)^p(x,t), \label{cc} \\
C_1 (T(x) - t)^{-q-1} &\leq \partial_t \psi(x,t) \leq C_2 (T(x) - t)^{-q-1}, \label{dd} \\
C_1 (T(x) - t)^{-q} &\leq (\phi + \psi)(x,t) \leq C_2 (T(x) - t)^{-q}. \label{ee}
\end{align}
\end{theorem}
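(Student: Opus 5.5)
We follow Sasaki's scheme \cite{Sasaki2019}, adapted to the damping term. Write $N(x,t) := 2^{-p}(\phi+\psi)^p - \frac{\mu}{2(1+t)}(\phi+\psi)$ for the common right-hand side of \eqref{1.5}.

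\textbf{Step 1 (positivity and monotonicity).} First we show that $\phi \ge \gamma_1$, $\psi \ge \gamma_2$ and $N>0$ on $\Omega$. By \eqref{condgam}, $(\gamma_1+\gamma_2)^{p-1} > \mu p 2^p \ge 2^{p-1}\mu$. Hence $N \ge 2^{-p}s^p - \frac{\mu}{2}s > 0$ whenever $s=\phi+\psi \ge \gamma_1+\gamma_2$. A continuity argument along the characteristics $x\pm t=c$, using A\ref{A2}, then gives that $\phi$ and $\psi$ are nondecreasing along their respective characteristics and stay above $\gamma_1,\gamma_2$. Moreover, on this range the damping term is absorbed: $N \ge c_p (\phi+\psi)^p$ with $c_p = 2^{-p}(1 - 1/(2p))$, and $N \le 2^{-p}(\phi+\psi)^p$.

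\textbf{Step 2 (the key estimate \eqref{aa} and \eqref{cc}).} Note that $\partial_t\phi = N + \partial_x\phi$, and likewise $\partial_t\psi = N - \partial_x\psi$. So it suffices to prove
$$|\partial_x\phi| + |\partial_x\psi| \le (1-\delta)\,N \quad \text{on } \Omega$$
for some $\delta=\delta(\varepsilon_0)>0$. At $t=0$ this is exactly A\ref{A4} with $\delta = \varepsilon_0/(2+\varepsilon_0)$. To propagate it, differentiate \eqref{1.5} in $x$. With $a = \partial_x\phi$ and $b = \partial_x\psi$ this gives
$$D_- a = N_s\,(a+b), \qquad D_+ b = N_s\,(a+b), \qquad N_s = p2^{-p}(\phi+\psi)^{p-1} - \frac{\mu}{2(1+t)}.$$
We then compare $|a|+|b|$ with $N$ by computing $D_\pm N$ along each characteristic. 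These derivatives involve $N_s(2N \pm \text{gradients})$ plus the term $\frac{\mu}{2(1+t)^2}(\phi+\psi)\ge 0$, which has a favourable sign. A maximum-principle argument then runs as follows. Take the first time at which the inequality becomes an equality at some point. Integrate backwards along the two characteristics through that point. The growth of $N$ dominates the growth of $a$ and $b$, because $D_\pm N \ge N_s\,(2-(1-\delta))N$ while $|D_-a|,|D_+b| \le N_s(1-\delta)N$. This yields a contradiction. This propagation step is the main obstacle: the $\mu$-terms in $N_s$ and in $D_\pm N$ break the exact structure of Sasaki's argument, and the lower bound $(\gamma_1+\gamma_2)^{p-1} > \mu p 2^p$ is needed precisely to keep $N_s \ge 2^{-p}p(\phi+\psi)^{p-1}/2 > 0$ and $N$ comparable to $(\phi+\psi)^p$. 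Given the bound, \eqref{aa} and \eqref{cc} follow with $C_1 = \delta c_p$ and $C_2 = 2^{1-p}$.

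\textbf{Step 3 (rates in $T(x)-t$).} Set $w(t) = (\phi+\psi)(x,t)$ at fixed $x$. Adding \eqref{aa} and \eqref{cc} gives $2C_1 w^p \le w' \le 2C_2 w^p$. Integrating from $t$ to $T(x)$, and using that $w \to\infty$ as $t\nearrow T(x)$ (Theorem \ref{thm31}), we get
$$(2C_2(p-1))^{-q}(T(x)-t)^{-q} \le w \le (2C_1(p-1))^{-q}(T(x)-t)^{-q},$$
which is \eqref{ee}. Substituting \eqref{ee} into \eqref{aa} and \eqref{cc}, and noting that $pq = q+1$, yields \eqref{bb} and \eqref{dd}. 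Finally, we rename the constants; they depend only on $p$ and $\varepsilon_0$.
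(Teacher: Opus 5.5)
\textbf{There is a genuine gap in Step 2.} Steps 1 and 3 are fine, and so is the reduction $\partial_t\phi=N+\partial_x\phi$, $\partial_t\psi=N-\partial_x\psi$. The problem is propagating your key bound $|\partial_x\phi|+|\partial_x\psi|\le(1-\delta)N$, which does not work as sketched.

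First, the computation is off. Since $D_-(\phi+\psi)=2(N-\partial_x\psi)$ and $D_+(\phi+\psi)=2(N+\partial_x\phi)$, one has
\[
D_-N=2N_s\,(N-\partial_x\psi)+\frac{\mu(\phi+\psi)}{2(1+t)^2},\qquad D_+N=2N_s\,(N+\partial_x\phi)+\frac{\mu(\phi+\psi)}{2(1+t)^2}.
\]
The gradient carries a factor $2$, so your bootstrap hypothesis only gives $D_\pm N\ge 2\delta N_sN$, not $(1+\delta)N_sN$.

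Second, even your claimed inequality would not close the argument. $|\partial_x\phi|$ grows along $x+t=\mathrm{const}$ and $|\partial_x\psi|$ along $x-t=\mathrm{const}$, each at rate up to $N_s(|\partial_x\phi|+|\partial_x\psi|)\le(1-\delta)N_sN$. Keeping their sum below $(1-\delta)N$ where the two characteristics meet would require $N$ to grow at rate about $2N_sN$ along each of them. With rate $(1+\delta)N_sN$ you lose a factor $(1+\delta)/2<1$.

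Equivalently, at a first touching point you must bound $\partial_x\phi(P')+\partial_x\psi(P'')$ at two different points, which brings in second derivatives. Structurally, in the ratios $\hat a=\partial_x\phi/N$, $\hat b=\partial_x\psi/N$ your region $|\hat a|+|\hat b|\le 1-\delta$ is not a product of intervals. Such sets are in general not invariant when $\hat a$ and $\hat b$ are carried by different characteristic families, because their values at a point come from different initial points.

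The bound is also stronger than the structure supports. On the self-similar profiles $V_{\phi,\alpha},V_{\psi,\alpha}$ one has $\hat a=-\alpha/(1+\alpha)$ and $\hat b=-\alpha/(1-\alpha)$. Hence $|\hat a|+|\hat b|=2|\alpha|/(1-\alpha^2)$, which exceeds $1$ for $|\alpha|>\sqrt2-1$. Also $\partial_t\phi/N=1/(1+\alpha)$, so an $\varepsilon_0$-independent $C_2=2^{1-p}$ should not be expected when slopes near $-1/(1+\varepsilon_0)$ occur.

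\textbf{The missing idea is to propagate the cone condition} $\partial_t\phi\ge(1+\varepsilon_0)|\partial_x\phi|$, $\partial_t\psi\ge(1+\varepsilon_0)|\partial_x\psi|$ instead; this is what the paper does. Set $\lambda=1+\varepsilon_0$ and
\[
J=\partial_t\phi+\lambda\partial_x\phi,\quad \tilde J=\partial_t\phi-\lambda\partial_x\phi,\quad L=\partial_t\psi+\lambda\partial_x\psi,\quad \tilde L=\partial_t\psi-\lambda\partial_x\psi.
\]
These satisfy the cooperative linear system
\[
D_-J=N_s(J+L)+\frac{\mu(\phi+\psi)}{2(1+t)^2},\qquad D_+L=N_s(J+L)+\frac{\mu(\phi+\psi)}{2(1+t)^2},
\]
and the same system holds for $(\tilde J,\tilde L)$. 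Here $N_s>0$ by \eqref{condgam}, and the data at $t=0$ are nonnegative exactly by A4. Each quantity is transported along its own characteristic, with positive coupling only to the other one. The paper runs this induction on the Picard iterates, where it is immediate.

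In your ratio variables this is the asymmetric rectangle
\[
-\frac{1}{2+\varepsilon_0}\le\hat a\le\frac{1}{\varepsilon_0},\qquad -\frac{1}{\varepsilon_0}\le\hat b\le\frac{1}{2+\varepsilon_0}.
\]
It is a product set and is invariant. From $\partial_t\phi=N+\partial_x\phi$ and $|\partial_x\phi|\le\partial_t\phi/(1+\varepsilon_0)$ you get
\[
\frac{1+\varepsilon_0}{2+\varepsilon_0}\,N\le\partial_t\phi\le\frac{1+\varepsilon_0}{\varepsilon_0}\,N,
\]
and similarly for $\psi$. This gives \eqref{aa} and \eqref{cc}, with $C_2$ now depending on $\varepsilon_0$, and your Step 3 then goes through unchanged.
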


\begin{theorem}[Lipschitz continuity]
\label{thm3.2}
The blow-up curve from Theorem \ref{thm31} satisfies the Lipschitz condition
\begin{equation}\label{eq:3.15}
|T(x) - T(y)| \leq \frac{1}{1+\varepsilon_0}|x-y| \quad \forall \ x,y \in B_{R^*},
\end{equation}
where $\varepsilon_0$ is involved in $(A_4)$.
\end{theorem}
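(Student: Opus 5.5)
We prove the Lipschitz bound \eqref{eq:3.15} in the style of Caffarelli--Friedman and Sasaki. The idea is to show that $\phi$ and $\psi$ grow strictly faster in time than they can vary in space. This forces the blow-up curve to lie inside a cone of slope $1/(1+\varepsilon_0)<1$ around each of its points.

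\textbf{Step 1: a derivative comparison.} Set
$$
w_\pm := \partial_t\phi \pm (1+\varepsilon_0)\,\partial_x\phi, \qquad z_\pm := \partial_t\psi \pm (1+\varepsilon_0)\,\partial_x\psi .
$$
The goal is $w_\pm, z_\pm \ge 0$ in $\Omega$. Differentiating \eqref{1.5} in $t$ and in $x$ shows that $(\partial_t\phi,\partial_t\psi)$ and $(\partial_x\phi,\partial_x\psi)$ satisfy linear transport systems along the characteristics. The zeroth-order coefficient is
$$
F'(\phi+\psi) = p\,2^{-p}(\phi+\psi)^{p-1}-\frac{\mu}{2(1+t)}.
$$
The $t$-derivative system also carries the inhomogeneous term $\frac{\mu}{2(1+t)^2}(\phi+\psi)\ge 0$. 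This term has a favourable sign, so it only helps positivity. By \eqref{condgam} and A\ref{A2}, we have $\phi+\psi\ge\gamma_1+\gamma_2>(\mu p2^p)^{1/(p-1)}$, so the coefficient $F'(\phi+\psi)$ is positive. This is exactly what the lower bound on $\gamma_1+\gamma_2$ is for: it gives a cooperative (quasi-monotone) system.

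\textbf{Step 2: positivity at $t=0$ and its propagation.} At $t=0$ the equations give
$$
\partial_t\phi = F(f+g) + \partial_x f \ge 2^{-p}(\gamma_1+\gamma_2)^p-\tfrac{\mu}{2}(\gamma_1+\gamma_2)-|f'| .
$$
The first two terms are increasing in $f+g$ beyond the threshold, which justifies replacing $f+g$ by $\gamma_1+\gamma_2$. By A\ref{A4} this quantity dominates $(2+\varepsilon_0)(|f'|+|g'|)-|f'|$. Hence $\partial_t\phi \ge (1+\varepsilon_0)|f'|$, and similarly for $\psi$, so $w_\pm(\cdot,0)\ge 0$ and $z_\pm(\cdot,0)\ge 0$.

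We then write the transport system satisfied by $(w_\pm,z_\pm)$, integrate along the characteristics $x\pm t=c$, and run a continuity argument on the first time positivity fails. Cooperativity makes the cross terms nonnegative, which closes the argument.

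\textbf{Step 3 (main obstacle): the mixed derivative terms.} The difficulty is that $D_-$ applied to $\partial_x\phi$ produces terms in $\partial_x\psi$, and the combination $(1+\varepsilon_0)\partial_x$ is not itself characteristic. To handle this, I would first use the identity
$$
\partial_t\phi = F + \partial_x\phi \quad (\text{so } \partial_x\phi = \partial_t\phi - F),
$$
together with its analogue for $\psi$, to rewrite $w_\pm$ and $z_\pm$ in terms of $\partial_t\phi$, $\partial_t\psi$ and $F$ only. I would then prove the cleaner inequalities
$$
\partial_t\phi \ge \tfrac{1+\varepsilon_0}{2+\varepsilon_0}\,F, \qquad \partial_t\psi \ge \tfrac{1+\varepsilon_0}{2+\varepsilon_0}\,F,
$$
by comparison, since these quantities satisfy a closed transport system. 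The $\mu/(1+t)$ damping terms are controlled using \eqref{condgam}.

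\textbf{Step 4: conclusion.} Once $|\partial_x\phi|\le \frac{1}{1+\varepsilon_0}\partial_t\phi$ holds, $\phi$ is nondecreasing along every direction $(\pm\frac{1}{1+\varepsilon_0},1)$. Take $x_0$ and a point $(y,t)$ with $t<T(x_0)$ lying in the cone $|y-x_0|\le (1+\varepsilon_0)(T(x_0)-t)$. Moving along such directions from $(y,t)$ reaches points approaching $(x_0,T(x_0))$, where $\phi\to\infty$ by \eqref{eq:blowup}. Hence $\phi$, and likewise $\psi$, must blow up at $y$ no later than at time $T(x_0)-|y-x_0|/(1+\varepsilon_0)$. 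This gives
$$
T(y)\le T(x_0)+\frac{|x_0-y|}{1+\varepsilon_0},
$$
and exchanging the roles of $x_0$ and $y$ yields \eqref{eq:3.15}. This monotonicity also fits with the rates of Theorem \ref{thm3.3}.
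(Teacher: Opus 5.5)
Your Steps 1--2 are sound and amount to the paper's Lemma~\ref{em}, which the paper proves on the iterates and then passes to the limit. The obstacle you describe in Step 3 does not actually arise. Write $F'$ for the derivative of $F$ in its first argument. Then $D_-w_\pm = F'(\phi+\psi)\,(w_\pm+z_\pm)+\frac{\mu(\phi+\psi)}{2(1+t)^2}$, and $D_+z_\pm$ has the same right-hand side, so $(w_\pm,z_\pm)$ is already a closed cooperative system. The Step 3 detour would in any case only give $w_+\ge0$ and $z_-\ge0$. By contrast, $w_-\ge0$ is equivalent to the upper bound $\partial_t\phi\le\frac{1+\varepsilon_0}{\varepsilon_0}F$, which you do not address.

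The genuine gap is Step 4, and it has two parts.

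\textbf{Monotonicity used in the wrong direction.} You go from $(y,t)$ up to $(x_0,T(x_0))$ along directions in which $\phi$ is nondecreasing. This only yields $\phi(y,t)\le\phi$ at points near $(x_0,T(x_0))$, an upper bound at $y$ that cannot force blow-up there. Your intermediate claim, blow-up at $y$ by time $T(x_0)-|y-x_0|/(1+\varepsilon_0)$, would give $T(y)<T(x_0)$ for every $y\neq x_0$. That is absurd by symmetry, and it does not match the inequality you then display. In addition, the directions $(\pm\frac{1}{1+\varepsilon_0},1)$ do not sweep out the cone $|y-x_0|\le(1+\varepsilon_0)(T(x_0)-t)$; the extremal directions are $(\pm(1+\varepsilon_0),1)$. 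The correct version goes forward from $(x_0,s)$: it gives $\phi\bigl(y,s+\frac{|y-x_0|}{1+\varepsilon_0}\bigr)\ge\phi(x_0,s)\to\infty$ as $s\uparrow T(x_0)$.

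\textbf{The segment must stay in $\Omega$.} Even the corrected argument requires the whole segment to lie in $\Omega$. This segment is flatter than the characteristics, so it is not contained in the backward light cone of its endpoint, and domain of dependence does not supply this. You need an extra argument. For instance, first obtain continuity of $T$ from the light-cone bound $|T(x)-T(y)|\le|x-y|$. Then use the lower rate $(\phi+\psi)(x,t)\ge C_1(T(x)-t)^{-q}$ from Theorem~\ref{thm3.3} to show that $\phi+\psi$ cannot stay bounded along a segment that reaches $\partial\Omega$.

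The paper avoids this issue entirely by using the derivative bound only locally. It defines $E(x)$ by $(\phi+\psi)(x,E(x))=M$. Rolle's theorem in small balls inside $\Omega$ then gives $|E'|=|\partial_x(\phi+\psi)|/\partial_t(\phi+\psi)\le\frac{1}{1+\varepsilon_0}$. Finally it uses the blow-up rates to get $T-\varepsilon\le E<T$ and lets $\varepsilon\to0$.
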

Having established the regularity properties of the blow-up curve, we now characterize its specific form under the full set of assumptions:

\begin{theorem}[Linear structure of the limiting blow-up curve]
\label{th:3.6}
Under the assumptions A\ref{A1}--A\ref{A5}, there exists $\alpha \in (-1,1)$ such that:
\begin{align}
T_0(y) &= \alpha y \quad \text{for all } y \in \mathbb{R}, \label{eq:5.4} \\
v_\phi &= V_{\phi,\alpha}, \quad v_\psi = V_{\psi,\alpha}, \label{eq:5.5}
\end{align}
where $v_\phi$ and $v_\psi$ represent the self-similar profiles associated with the solutions $\phi$ and $\psi$,  respectively.
\end{theorem}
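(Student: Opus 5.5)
Given the rescaled blow-up limits from Theorems \ref{thm3.3} and \ref{thm3.2}, my plan is to show that the only possible limiting geometry is a straight line with a traveling-wave profile, in the spirit of Caffarelli--Friedman \cite{Caffarelli1986} and Sasaki \cite{Sasaki2019}. I fix a point $x_0\in B_{R^*}$ and a sequence $\lambda_n\to 0$, and consider the rescalings
\begin{equation}
T_n(y)=\frac{T(x_0+\lambda_n y)-T(x_0)}{\lambda_n},\qquad v_{\phi,n}(y,s)=\lambda_n^{q}\,\phi(x_0+\lambda_n y,\,T(x_0)+\lambda_n s),
\end{equation}
and define $v_{\psi,n}$ in the same way. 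By Theorem \ref{thm3.2}, each $T_n$ is Lipschitz with constant $(1+\varepsilon_0)^{-1}$. By \eqref{ee}, $v_{\phi,n}+v_{\psi,n}$ is comparable to $(T_n(y)-s)^{-q}$, and \eqref{bb}, \eqref{dd} bound the $s$-derivatives. The rescaled system \eqref{1.5} has damping coefficient $\lambda_n\mu/(1+T(x_0)+\lambda_n s)$, which tends to $0$. Arzel\`a--Ascoli, together with the derivative bounds (spatial derivatives are controlled from the characteristic equations via A\ref{A4}), then gives a subsequence with $T_n\to T_0$ locally uniformly. The limit $T_0$ is Lipschitz with constant $<1$ and satisfies $T_0(0)=0$. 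Along the same subsequence $(v_{\phi,n},v_{\psi,n})\to(v_\phi,v_\psi)$ locally uniformly below the graph of $T_0$, and the limit solves the undamped system $D_-v_\phi=D_+v_\psi=2^{-p}(v_\phi+v_\psi)^p$, still obeying the two-sided bounds of Theorem \ref{thm3.3}.

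The second step is to identify the candidate limits $V_{\phi,\alpha},V_{\psi,\alpha}$. For a line $T_0(y)=\alpha y$ with $|\alpha|<1$, I look for solutions of the form $V(y,s)=c(\alpha y-s)^{-q}$ in each component. Substituting gives algebraic relations for the two constants $c_\phi,c_\psi$: the characteristic operators $D_\mp$ act on $(\alpha y-s)$ as $-(1\pm\alpha)$, which produces $q(1\pm\alpha)c_{\phi/\psi}=2^{-p}(c_\phi+c_\psi)^p$ with the appropriate signs. This system has a unique positive solution, and that solution defines $V_{\phi,\alpha},V_{\psi,\alpha}$.

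The core step, and the one I expect to be the main obstacle, is to prove that $T_0$ is linear. I would follow Caffarelli--Friedman's flatness argument. First use the Lipschitz bound to select a supporting cone (or tangent line) direction at points where $T_0$ is differentiable; then use monotonicity/comparison for the limit system to compare $(v_\phi,v_\psi)$ with the explicit profiles $V_{\cdot,\alpha}$ attached to lines lying above or below $T_0$. The difficulty is that the system is not scalar, so a maximum principle must be built from the positive characteristic structure. Here I would use A\ref{A5} as follows. Differentiating the limit equations in $y$ and setting $w=\partial_y v_\phi-\alpha\partial_s v_\phi$ (and similarly for $\psi$), the linearized coefficient $p2^{-p}(v_\phi+v_\psi)^{p-1}$ must dominate the perturbation terms. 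The numerical inequality in A\ref{A5}, involving $(2p-1)(1-\frac1{2p})$, is precisely what makes $w$ have a sign, which forces $w\equiv0$. This yields invariance of $(v_\phi,v_\psi)$ along the direction $(1,\alpha)$. Invariance in turn means $T_0(y)-\alpha y$ is constant, hence zero since $T_0(0)=0$, and \eqref{eq:5.4} follows.

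Finally, once $T_0(y)=\alpha y$ and the profiles depend only on $\alpha y-s$, each profile satisfies a scalar ODE. Uniqueness of its positive solution with blow-up rate $(\alpha y-s)^{-q}$, guaranteed by the bounds in Theorem \ref{thm3.3}, gives \eqref{eq:5.5}. The bound $|\alpha|\le(1+\varepsilon_0)^{-1}<1$ comes from Theorem \ref{thm3.2}.
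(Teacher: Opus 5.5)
Your first two steps (compactness of the rescalings, vanishing of the damping, and the explicit profiles $V_{\phi,\alpha},V_{\psi,\alpha}$) agree with the paper. The core step, linearity of $T_0$, has a genuine gap: you never establish that $T_0$ is \emph{concave} (Lemma~\ref{lem:5.2}), which is the property the paper's argument rests on. In null coordinates $u=s+y$, $v=s-y$, the sum $S=v_\phi+v_\psi$ satisfies $\partial_uS\ge0$, $\partial_vS\ge0$ and $\partial_u\partial_vS=2^{-1-p}pS^{p-1}\partial_sS\ge0$. The paper turns this into convexity of $\{s<T_0(y)\}$.

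Without concavity, your plan has three holes. A tangent line at a point where the Lipschitz function $T_0$ happens to be differentiable carries no global information. There is no line through the origin lying above the whole graph, so you cannot compare $(v_\phi,v_\psi)$ with $V_{\cdot,\alpha}$ on a half-plane. And the slope $\alpha$ is never actually determined.

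With concavity in hand, the paper argues as follows:
\begin{itemize}
\item It blows down once more (Lemma~\ref{lemma:5.3}) to a cone with slopes $\alpha=\inf_{y>0}T_0(y)/y\le\beta=\sup_{y<0}T_0(y)/y$.
\item It identifies the blow-down limit near the ray $s=\beta y$ with $V_{\cdot,\beta}$. The lower bound comes from tangential monotonicity $\partial_\sigma w\ge0$ (Lemmas~\ref{lem5.5}, \ref{c7.5}). The upper bound on the sum comes from a maximum-principle comparison with the shifted profiles $V^\varepsilon_{\cdot,\beta}$ (Lemma~\ref{lem7.6}). The components are separated along characteristics (Lemma~\ref{lem7.8}).
\item It does the same for $\alpha$ and concludes $\alpha=\beta$; concavity then forces $T_0(y)/y$ to be constant.
\end{itemize}

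The mechanism you propose in place of this does not work. In the limit the damping is gone. So for any constant-coefficient derivative $D$, the pair $(Dv_\phi,Dv_\psi)$ solves the homogeneous cooperative system $D_-(Dv_\phi)=c\,(Dv_\phi+Dv_\psi)$, $D_+(Dv_\psi)=c\,(Dv_\phi+Dv_\psi)$, with $c=p2^{-p}(v_\phi+v_\psi)^{p-1}>0$.
\begin{itemize}
\item There are no perturbation terms for A\ref{A5} to dominate. In the paper that $\mu$-dependent inequality is used only in Lemma~\ref{lem:4.1}, on the damped iterates, to get uniform bounds on $D_\theta^2$ and $D_\theta^3$, i.e.\ $\mathcal{C}^3_{\mathrm{loc}}$ compactness of the blow-up sequences.
\item The limit problem lives on $\{s<T_0(y)\}$, with no initial line on which a sign could be imposed and then propagated.
\item A one-signed derivative is not forced to vanish; for instance $\partial_s v_\phi>0$ everywhere. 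You would need the sign for both $D$ and $-D$, which is exactly what has to be proved.
\item $\partial_y-\alpha\partial_s$ differentiates along $(1,-\alpha)$, not along $s=\alpha y$. For the target profile, $(\partial_y-\alpha\partial_s)V_{\phi,\alpha}=-2q\alpha C_{\phi,\alpha}(\alpha y-s)^{-q-1}\neq0$ when $\alpha\neq0$. So the identity $w\equiv0$ you aim for fails even for the answer; the tangential derivative is $\partial_y+\alpha\partial_s$.
\end{itemize}
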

As a consequence of Theorems~\ref{thm3.3}, \ref{thm3.2} and~\ref{th:3.6}, we obtain the $\mathcal{C}^1-$regularity of the blow-up curve in the following result.
\begin{theorem}[$\mathcal{C}^1-$regularity of the blow-up curve]
\label{thm3.4}
The blow-up curve $x \mapsto T(x)$ is of class $\mathcal{C}^1$ in $B_{R^*}$.
\end{theorem}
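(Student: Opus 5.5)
We deduce Theorem \ref{thm3.4} from Theorems \ref{thm3.3}, \ref{thm3.2} and \ref{th:3.6} with a Caffarelli--Friedman type blow-up argument. The route is: first show that $T$ is differentiable at every point, then show that $T'$ is continuous.

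\emph{Rescaling at a point.} Fix $x_0\in B_{R^*}$ and a sequence $\lambda_n\to 0$. Define the rescaled curve
\begin{equation}
T_n(y)=\frac{T(x_0+\lambda_n y)-T(x_0)}{\lambda_n},
\end{equation}
and the rescaled unknowns $\phi_n(y,s)=\lambda_n^{q}\phi(x_0+\lambda_n y,T(x_0)+\lambda_n s)$, with $\psi_n$ defined in the same way. By Theorem \ref{thm3.2}, the $T_n$ are uniformly Lipschitz with constant $(1+\varepsilon_0)^{-1}<1$ and satisfy $T_n(0)=0$. By the two-sided bounds \eqref{aa}--\eqref{ee} of Theorem \ref{thm3.3}, $\phi_n,\psi_n$ and their time derivatives are bounded above and below by powers of the distance to the graph of $T_n$, uniformly in $n$. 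In the rescaled system the damping term becomes $\lambda_n\mu/(1+T(x_0)+\lambda_n s)$ times lower-order terms, so it vanishes in the limit.

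\emph{Passing to the limit.} By Arzel\`a--Ascoli, a subsequence satisfies $T_n\to T_0$ locally uniformly and $(\phi_n,\psi_n)\to(v_\phi,v_\psi)$ locally uniformly below the graph of $T_0$. These are exactly the blow-up limits to which Theorem \ref{th:3.6} applies. That theorem gives $T_0(y)=\alpha y$ with $\alpha\in(-1,1)$ and identifies the profiles as $V_{\phi,\alpha},V_{\psi,\alpha}$.

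\emph{Differentiability.} The key point, and in my view the main obstacle, is that $\alpha$ must not depend on the subsequence. I would argue by contradiction. Suppose two subsequences produce slopes $\alpha\neq\alpha'$. Since the set of limits of a family of uniformly Lipschitz functions along $\lambda\to0$ is connected, every intermediate slope also arises as a limit. On the other hand, the profile $V_{\cdot,\alpha}$ determines $\alpha$, for example through the value $v_\phi(0,-1)$, which varies strictly monotonically in $\alpha$. I would then use the rescaled quantity $\lambda^{q}\phi(x_0,T(x_0)-\lambda)$, which converges along each subsequence to $V_{\phi,\alpha}(0,-1)$. A monotonicity or continuity-in-$\lambda$ argument of Caffarelli--Friedman type should force this quantity to have a single limit, and hence force uniqueness of $\alpha$. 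It follows that $T'(x_0)=\alpha(x_0)$ exists.

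\emph{Continuity of $T'$.} Take $x_k\to x_0$ and suppose $T'(x_k)\to\beta$ along a subsequence. I would perform a diagonal rescaling centred at $x_k$ with scales $\lambda_k\to0$ chosen slowly enough that the rescaled curves around $x_k$ are close to the linear function $T'(x_k)y$. Passing to the limit again yields a blow-up limit which, by Theorem \ref{th:3.6}, is linear. Comparing this limit with the profiles at $x_0$ through the uniform estimates of Theorem \ref{thm3.3} should give $\beta=T'(x_0)$. Hence $T\in\mathcal{C}^1(B_{R^*})$.
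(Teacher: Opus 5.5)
Your reduction to blow-up limits and your use of Theorem~\ref{th:3.6} match the paper. The gap is the step you yourself flag as the main obstacle: it is left open, and the soft arguments you propose cannot close it. Connectedness of the set of limits of $T_\lambda$, together with continuity of $\lambda\mapsto\lambda^{q}\phi(x_0,T(x_0)-\lambda)$, only shows two things: the set of limiting slopes is a closed interval, and the cluster set of that quantity is its continuous image. Nothing rules out a nontrivial interval. For example, $T(x)=c\,x\sin(\log\log(1/|x|))$ with small $c>0$ satisfies $T_\lambda(y)-c\,y\sin(\log\log(1/\lambda))\to 0$ locally uniformly. So every blow-up limit is linear and the limiting slopes fill $[-c,c]$, yet $T$ is not differentiable at $0$. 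A solution whose rescalings look like $V_{\cdot,\alpha(\lambda)}$ with slowly drifting $\alpha(\lambda)$ is compatible with everything in Theorems~\ref{thm3.3}, \ref{thm3.2} and \ref{th:3.6}. Saying that ``a monotonicity or continuity-in-$\lambda$ argument should force a single limit'' therefore is the missing proof. You exhibit no monotone quantity, and the paper stresses that the equation lacks the variational structure that would supply one.

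There is also a slip in how you identify $\alpha$. $V_{\phi,\alpha}(0,-1)=C_{\phi,\alpha}=q^{q}(1-\alpha)^{1+q}(1+\alpha)^{q}$ is not monotone on $(-1,1)$; it peaks at $\alpha=-1/(1+2q)$. The ratio $V_{\phi,\alpha}/V_{\psi,\alpha}=(1-\alpha)/(1+\alpha)$ would identify $\alpha$, but that does not touch the real issue.

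The continuity step has the same defect. $T(x)=c\,x^{2}\sin(1/x)$ has linear blow-up limits at every point while $T'$ is discontinuous at $0$. The estimates of Theorem~\ref{thm3.3} hold uniformly for every slope, so they cannot force $\beta=T'(x_0)$. Moreover, your two demands on $\lambda_k$ are incompatible in general: you need $\lambda_k$ below the (non-uniform) differentiability scale at $x_k$, and also $|x_k-x_0|\ll\lambda_k$.

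The ingredient the paper uses, and which you need, is the sign of directional derivatives. From the explicit profile,
$D_\theta V_{\phi,\alpha}=qC_{\phi,\alpha}\sqrt{1+\alpha^{2}}\,(\alpha y-s)^{-q-1}\sin(\theta-\theta_\alpha)$ with $\theta_\alpha=\arctan\alpha$, which is positive for $\theta\in(\theta_\alpha,\theta_\alpha+\pi)$. Suppose two blow-up sequences gave slopes $\alpha_1<\alpha_2$. The $\mathcal{C}^1_{\mathrm{loc}}$ convergence and the scaling identity $D_\theta\phi_\lambda=\lambda^{q+1}(D_\theta\phi)(\cdots)$ then give $D_\theta\phi>0$ near $(x_0,T(x_0))$ for $\theta\in S_j=(\theta_{\alpha_j}+\varepsilon,\theta_{\alpha_j}+\pi-\varepsilon)$, $j=1,2$. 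Take $\theta^*\in(\theta_{\alpha_1}+\varepsilon,\theta_{\alpha_2}-\varepsilon)$; then $\theta^*\in S_1$ and $\theta^*+\pi\in S_2$. Hence $D_{\theta^*}\phi>0$ and $D_{\theta^*+\pi}\phi=-D_{\theta^*}\phi>0$ on a common region, a contradiction. The paper runs the same sector argument with centres $x_0$ and $x_1$ to obtain continuity of $T'$.

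One caution if you adopt this route: the two positivity regions come from different scales. For them to overlap, positivity must persist toward the tip. In the Caffarelli--Friedman framework this is where the positive coupling in the linearized system for $(D_\theta\phi,D_\theta\psi)$ enters, via a comparison argument along characteristics. Some PDE-based sign or comparison mechanism of this kind is exactly what your proposal is missing.
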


\section{Proof of Theorem \texorpdfstring{\ref{thm31}}{Theorem 1}: Construction via Successive Approximations}\label{sec4}
In this section, we aim to prove the existence and regularity of solutions to system \eqref{1.5}, as stated in Theorem \ref{thm31}. Our approach is based on an iterative scheme. We start with the constant initial functions $\phi_0(x,t) \equiv \gamma_1$ and $\psi_0(x,t) \equiv \gamma_2$, as given in \eqref{1.6}, and then construct recursively the sequences $\{\phi_n\}_{n=0}^\infty$ and $\{\psi_n\}_{n=0}^\infty$ using the characteristic system:
\begin{equation}
\label{2.1}
\begin{cases}
D_- \phi_{n+1} = 2^{-p}|\phi_n + \psi_n|^p - \dfrac{\mu}{1 + t} \dfrac{\phi_n + \psi_n}{2}, & (x,t) \in \mathbb{R} \times \mathbb{R}^+, \\
D_+ \psi_{n+1} = 2^{-p}|\phi_n + \psi_n|^p - \dfrac{\mu}{1 + t} \dfrac{\phi_n + \psi_n}{2}, & (x,t) \in \mathbb{R} \times \mathbb{R}^+, \\
\phi_{n+1}(x, 0) = f(x), \quad \psi_{n+1}(x, 0) = g(x), & x \in \mathbb{R},
\end{cases}
\end{equation}
for all $n \geq 0$. This system admits an equivalent integral formulation, which we will use to carry out our analysis, as follows:
\begin{equation}
\label{2.2}
\begin{aligned}
\phi_{n+1}(x,t) &= f(x+t) + \int_0^t \mathcal{N}_n(x+t-s,s)\,ds, \\
\psi_{n+1}(x,t) &= g(x-t) + \int_0^t \mathcal{N}_n(x-t+s,s)\,ds,
\end{aligned}
\end{equation}
where $\mathcal{N}_n(x,s) := 2^{-p}|\phi_n(x,s) + \psi_n(x,s)|^p - \dfrac{\mu}{1+s}\dfrac{\phi_n(x,s) + \psi_n(x,s)}{2}$ represents the nonlinear coupling term.\\
The integral formulation reveals the characteristic structure of the problem; $\phi_n$ propagates right along the lines $x+t = \text{constant}$ while $\psi_n$ propagates left along the lines $x-t = \text{constant}$, with nonlinear coupling through the interaction terms. Taking advantage of the damping effects from the $\mu/(1+t)$ terms and the controlled growth of the nonlinearities under our assumptions, the convergence of these iterative sequences and the regularity of the limiting functions will be established by analyzing the contractive properties of this recursive system in appropriate function spaces.  

\begin{remark}[Preservation of nonnegativity]
\rm 
For any $F \in \mathcal{C}^1(K_{R^*,T^*})$, the iterative scheme \eqref{2.1}--\eqref{2.2} preserves non-negativity when:
\begin{itemize}
    \item[$(i)$] $F(x,0) \geq 0$ for all $x \in B_{R^* + T^*}$.
    \item[$(ii)$] Either $D_-F(x,t) \geq 0$ or $D_+F(x,t) \geq 0$ throughout $K_{R^*,T^*}$.
\end{itemize}
This property will be crucial for establishing the monotonicity of our approximate solutions.
\end{remark}

We now establish two fundamental technical lemmas that underpin our existence proof.

\begin{lemma}[Monotonicity of Approximations]\label{lem:2.1}
Under assumption $(A_2)$, the successive approximations satisfy:
$$
\phi_{n+1}(x,t) \geq \phi_n(x,t) \geq 0 \quad \text{and} \quad \psi_{n+1}(x,t) \geq \psi_n(x,t) \geq 0,
$$
for all $(x,t) \in K_{R^*,T^*}$ and every $n \in \mathbb{N} \cup \{0\}$. Moreover, this monotonicity holds uniformly across the domain.
\end{lemma}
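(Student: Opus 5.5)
The plan is to argue by induction on $n$, comparing consecutive iterates through the integral formulation \eqref{2.2}. The key observation is that the nonlinear coupling term is a nondecreasing, nonnegative function of $S:=\phi+\psi$ once $S$ is above the threshold in \eqref{condgam}. For $s\ge 0$ and $t\ge 0$, set
$$
h_t(S):=2^{-p}|S|^p-\frac{\mu}{1+t}\,\frac{S}{2},
$$
so that $\mathcal N_n(x,s)=h_s\bigl((\phi_n+\psi_n)(x,s)\bigr)$.

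\textbf{Step 1: properties of $h_t$.} For $S\ge \gamma_1+\gamma_2$ I would verify
$$
h_t'(S)=p2^{-p}S^{p-1}-\frac{\mu}{2(1+t)}\ \ge\ p2^{-p}S^{p-1}-\frac{\mu}{2}\ \ge 0 .
$$
The last inequality holds because $S^{p-1}\ge(\gamma_1+\gamma_2)^{p-1}>\mu p2^{p}\ge \mu 2^{p-1}/p$, using \eqref{condgam} and $p>1$. Similarly $h_t(S)\ge S\bigl(2^{-p}S^{p-1}-\mu/2\bigr)\ge 0$, since $S^{p-1}>\mu p 2^p\ge \mu 2^{p-1}$. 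Hence on $[\gamma_1+\gamma_2,\infty)$ the map $h_t$ is nonnegative and nondecreasing, uniformly in $t\ge0$.

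\textbf{Step 2: geometry of the domain.} For $(x,t)\in K_{R^*,T^*}$, the backward characteristic segments $\{(x+t-s,s)\}_{0\le s\le t}$ and $\{(x-t+s,s)\}_{0\le s\le t}$ remain inside $K_{R^*,T^*}$. Indeed, if $|x-y|<T^*-t$ with $y\in B_{R^*}$, then $|x\pm(t-s)-y|<T^*-s$. Their feet $x\pm t$ lie in $B_{R^*+T^*}$. Consequently (A\ref{A2}) applies to $f(x+t)$ and $g(x-t)$, and each iterate on $K_{R^*,T^*}$ only involves values of the previous iterate on $K_{R^*,T^*}$. By induction each $\phi_n,\psi_n$ is continuous and finite there, being the integral of a continuous function over a compact segment.

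\textbf{Step 3: the induction.} For the base case $n=0$, \eqref{2.2} gives
$$
\phi_1(x,t)-\phi_0=\bigl(f(x+t)-\gamma_1\bigr)+\int_0^t h_s(\gamma_1+\gamma_2)\,ds\ \ge 0 .
$$
This uses (A\ref{A2}) and Step 1, and the same argument with $g$ gives $\psi_1\ge\psi_0$. For the inductive step, assume $\phi_n\ge\phi_{n-1}\ge\gamma_1$ and $\psi_n\ge\psi_{n-1}\ge\gamma_2$ on $K_{R^*,T^*}$. Then $S_n\ge S_{n-1}\ge\gamma_1+\gamma_2$, and subtracting the formulas \eqref{2.2} for consecutive indices gives
$$
\phi_{n+1}(x,t)-\phi_n(x,t)=\int_0^t\Bigl[h_s\bigl(S_n(x+t-s,s)\bigr)-h_s\bigl(S_{n-1}(x+t-s,s)\bigr)\Bigr]ds\ \ge 0 .
$$
The integrand is nonnegative by the monotonicity of $h_s$ from Step 1, with Step 2 guaranteeing that the points lie in $K_{R^*,T^*}$. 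The analogous formula gives $\psi_{n+1}\ge\psi_n$. Nonnegativity follows, since $\phi_n\ge\phi_0=\gamma_1>0$ and $\psi_n\ge\psi_0=\gamma_2>0$. Since nothing in the argument depends on the point, the inequalities hold uniformly over $K_{R^*,T^*}$.

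\textbf{Main obstacle.} The only delicate point is the damping term, which enters with a negative sign. It destroys the automatic monotonicity of the nonlinearity that Sasaki uses for $\mu=0$. The way around it is to keep every iterate above the threshold $\gamma_1+\gamma_2$, where \eqref{condgam} makes $S\mapsto h_t(S)$ both nonnegative and nondecreasing for every $t$. This is why the induction hypothesis must carry the lower bounds $\phi_n\ge\gamma_1$ and $\psi_n\ge\gamma_2$ along, rather than only $\phi_n,\psi_n\ge0$.
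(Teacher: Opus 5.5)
Your proof is correct and follows essentially the same route as the paper. Both arguments run an induction on $n$ through the integral formulation \eqref{2.2}, use that $S\mapsto 2^{-p}S^p-\frac{\mu}{1+t}\frac{S}{2}$ is nonnegative and nondecreasing above a threshold guaranteed by \eqref{condgam}, and carry the lower bound $\phi_n+\psi_n\ge\gamma_1+\gamma_2$ through the induction. Your version is more careful than the paper's on two points: you derive the threshold inequalities directly from \eqref{condgam}, and you check that backward characteristics from $K_{R^*,T^*}$ stay in $K_{R^*,T^*}$ with feet in $B_{R^*+T^*}$.
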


\begin{proof}
We proceed by strong induction on $n$. 

\noindent
We start by proving the case $n=0$. From assumption $(A_2)$ and the integral form \eqref{2.2}, we estimate:
$$
\phi_1(x,t) \geq f(x+t) + \int_0^t \left(2^{-p}|\gamma_1+\gamma_2|^p - \mu\frac{\gamma_1+\gamma_2}{2}\right) ds \geq \gamma_1 = \phi_0(x,t) \geq 0,
$$
for all $(x,t)\in K_{R^*,T^*}$. Similarly, $\psi_1 \geq \psi_0 \geq 0$ in $K_{R^*,T^*}$.

Now, we proceed with the inductive step. Assume that $\phi_k \geq \phi_{k-1} \geq 0$ and $\psi_k \geq \psi_{k-1} \geq 0$ for all $1 \leq k \leq n$. The difference between successive iterates satisfies:
$$
\phi_{n+1} - \phi_n = \int_0^t \left[N(\phi_n,\psi_n) - N(\phi_{n-1},\psi_{n-1})\right](x+t-s,s) ds,
$$
where $\displaystyle N(\phi,\psi) = 2^{-p}|\phi+\psi|^p - \frac{\mu}{1+s}\frac{\phi+\psi}{2}$.

 Consider the function
$$
\zeta(x) = \left(\frac{x}{2}\right)^p - \frac{\mu}{1+t}\frac{x}{2}, \quad t > 0.
$$
For $x > \alpha := 2(\mu/p)^{1/(p-1)}$ (which holds since $\gamma_1+\gamma_2 > 2\Big(\tfrac{4}{3}\,\mu\Big)^{\!\frac{1}{p-1}}$), $\zeta$ is strictly increasing. By the induction hypothesis:
$$
\phi_n + \psi_n \geq \phi_{n-1} + \psi_{n-1} > \gamma_1 + \gamma_2 > \alpha,
$$
and thus:
$$
N(\phi_n,\psi_n) = \zeta(\phi_n+\psi_n) \geq \zeta(\phi_{n-1}+\psi_{n-1}) = N(\phi_{n-1},\psi_{n-1}).
$$

Finally, the integrand in $\phi_{n+1}-\phi_n$ is nonnegative, so $\phi_{n+1} \geq \phi_n \geq 0$. An identical argument shows that $\psi_{n+1} \geq \psi_n \geq 0$, completing thus the induction.
\end{proof}

\begin{lemma}[Characteristic Derivative Bounds]\label{em}
Under the assumptions $(A_2)$-$(A_4)$, the approximate solutions satisfy the following gradient estimates:
$$
\partial_t \phi_n(x,t) \geq (1+\varepsilon_0)|\partial_x \phi_n(x,t)| \quad \text{and} \quad \partial_t \psi_n(x,t) \geq (1+\varepsilon_0)|\partial_x \psi_n(x,t)|,
$$
uniformly for all $(x,t) \in K_{R^*,T^*}$ and every iteration $n \in \mathbb{N} \cup \{0\}$. These inequalities persist throughout the approximation scheme and reflect the dominant behavior of temporal derivatives over spatial derivatives in our system.
\end{lemma}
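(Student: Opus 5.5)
We prove both inequalities simultaneously by induction on $n$, following Sasaki's strategy for the undamped case. Since $\partial_t \pm \partial_x$ are the characteristic derivatives, the claim $\partial_t\phi_n \ge (1+\varepsilon_0)|\partial_x\phi_n|$ is equivalent to the pair
\[
\Bigl(1+\tfrac{\varepsilon_0}{2}\Bigr)D_+\phi_n - \tfrac{\varepsilon_0}{2}D_-\phi_n \ge 0,\qquad \Bigl(1+\tfrac{\varepsilon_0}{2}\Bigr)D_-\phi_n - \tfrac{\varepsilon_0}{2}D_+\phi_n \ge 0,
\]
because $\partial_t=\tfrac12(D_++D_-)$ and $\partial_x=\tfrac12(D_+-D_-)$. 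The same pair characterizes the inequality for $\psi_n$. The base case $n=0$ is trivial, since $\phi_0,\psi_0$ are constants.

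For the inductive step we differentiate the integral formula \eqref{2.2}. Writing $\mathcal N_n=\zeta_s(\phi_n+\psi_n)$, with $\zeta_s(y)=2^{-p}y^p-\frac{\mu}{2(1+s)}y$, we obtain
\[
\partial_x\phi_{n+1}(x,t)=f'(x+t)+\int_0^t \zeta_s'(\phi_n+\psi_n)\,\partial_x(\phi_n+\psi_n)(x+t-s,s)\,ds,
\]
\[
\partial_t\phi_{n+1}(x,t)=f'(x+t)+\mathcal N_n(x,t)+\int_0^t \zeta_s'(\phi_n+\psi_n)\,\partial_x(\phi_n+\psi_n)(x+t-s,s)\,ds .
\]
Rather than comparing these directly, it is cleaner to use $D_-\phi_{n+1}=\mathcal N_n$, so that $\partial_t\phi_{n+1}=\partial_x\phi_{n+1}+\mathcal N_n$. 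The target inequality then reduces to
\[
\mathcal N_n \ge \varepsilon_0|\partial_x\phi_{n+1}|+2\max(0,-\partial_x\phi_{n+1}),
\]
which holds provided $\mathcal N_n(x,t)\ge (2+\varepsilon_0)|\partial_x\phi_{n+1}(x,t)|$.

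The key step is to bound $|\partial_x\phi_{n+1}|$. By the induction hypothesis, $|\partial_x(\phi_n+\psi_n)|\le \frac{1}{1+\varepsilon_0}\partial_t(\phi_n+\psi_n)$. By Lemma~\ref{lem:2.1} and \eqref{condgam}, $\phi_n+\psi_n\ge\gamma_1+\gamma_2>\alpha$, so $\zeta_s'\ge0$ there. Moreover $\zeta_s'(\phi_n+\psi_n)\,\partial_t(\phi_n+\psi_n)=\partial_s\mathcal N_n$ up to the harmless term $\frac{\mu}{2(1+s)^2}(\phi_n+\psi_n)\ge0$. Hence
\[
|\partial_x\phi_{n+1}|\le \max|f'|+\int_0^t \zeta_s'\,|\partial_x(\phi_n+\psi_n)|\,ds.
\]
The integral must then be compared with $\mathcal N_n(x,t)-\mathcal N_n(\cdot,0)$ along the characteristic. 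The obstacle is that the integrand is evaluated along the ray $x+t-s$, whereas $\partial_t$ measures change at fixed $x$. We handle this as Sasaki does, by splitting $\partial_x(\phi_n+\psi_n)$ into characteristic derivatives. On the ray we have $\frac{d}{ds}=D_-$, and
\[
D_-(\phi_n+\psi_n)=\mathcal N_{n-1}+D_-\psi_n,
\]
with both terms controlled by the induction hypothesis. Using $(A_4)$, i.e.\ $\mathcal N\ge\zeta_0(\gamma_1+\gamma_2)\ge(2+\varepsilon_0)\max(|f'|+|g'|)$, we aim for the bound $(2+\varepsilon_0)|\partial_x\phi_{n+1}|\le \mathcal N_n(x,t)$, after also using the monotonicity of $\mathcal N_n$ in time, which holds because $\mathcal N$ is increasing along characteristics by Lemma~\ref{lem:2.1}-type arguments.

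I expect this comparison to be the main difficulty, and the $\mu$-term is where it bites: the explicit $s$-dependence of $\zeta_s$ breaks the exact chain rule $\frac{d}{ds}\mathcal N=\zeta'\frac{d}{ds}(\phi+\psi)$. Fortunately the extra term has a favorable sign, since the damping coefficient decreases in $s$. If the direct estimate fails, I would strengthen the inductive statement to a characteristic-wise form such as
\[
(2+\varepsilon_0)|\partial_x\phi_n|\le \mathcal N_{n-1},
\]
which propagates more naturally. I would then deduce the lemma from it, using $\partial_t\phi_n=\partial_x\phi_n+\mathcal N_{n-1}$. The argument for $\psi_n$ is symmetric, with $D_+$ in place of $D_-$.
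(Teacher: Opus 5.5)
There is a genuine gap. It sits at the step where you replace the exact condition $\mathcal N_n\ge\varepsilon_0|\partial_x\phi_{n+1}|+2\max(0,-\partial_x\phi_{n+1})$ by the symmetric sufficient condition $(2+\varepsilon_0)|\partial_x\phi_{n+1}|\le\mathcal N_n$. The inductive hypothesis cannot deliver this, and the comparison you flag as the main difficulty is left without a working mechanism.

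To see this, work along the ray $(x+t-s,s)$ and set $X=\int_0^t\zeta_s'\,\partial_x(\phi_n+\psi_n)\,ds$ and $Y=\int_0^t\zeta_s'\,D_-(\phi_n+\psi_n)\,ds$. Then $\partial_x\phi_{n+1}=f'(x+t)+X$, $\int_0^t\zeta_s'\,\partial_t(\phi_n+\psi_n)\,ds=X+Y$, and $Y\le\mathcal N_n(x,t)-\mathcal N_n(x+t,0)$.

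With $\lambda=1+\varepsilon_0$, the hypothesis $|\partial_x(\phi_n+\psi_n)|\le\lambda^{-1}\partial_t(\phi_n+\psi_n)$ yields $|X|\le\lambda^{-1}(X+Y)$, i.e.\ $-Y/(2+\varepsilon_0)\le X\le Y/\varepsilon_0$. The upper end is attained when $(\partial_t-\lambda\partial_x)(\phi_n+\psi_n)=0$ along the ray, and nothing in the hypothesis excludes this. Hence for $\partial_x\phi_{n+1}>0$ the best you can get is $\varepsilon_0\,\partial_x\phi_{n+1}\le\mathcal N_n$, weaker than your target by the factor $(2+\varepsilon_0)/\varepsilon_0$. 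Heuristically, on the self-similar profiles your target amounts to $|T'|\le 1/(3+\varepsilon_0)$, which is sharper than the constant $1/(1+\varepsilon_0)$ of Theorem~\ref{thm3.2}.

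Your fallback hypothesis $(2+\varepsilon_0)|\partial_x\phi_n|\le\mathcal N_{n-1}$ does not propagate either. For $K=\mathcal N_n-(2+\varepsilon_0)\partial_x\phi_{n+1}$ one gets $D_-K=(\partial_t-(3+\varepsilon_0)\partial_x)\mathcal N_n$, a derivative in a direction outside the cone. Using $\partial_t\phi_n=\partial_x\phi_n+\mathcal N_{n-1}$ and $\partial_t\psi_n=-\partial_x\psi_n+\mathcal N_{n-1}$, the relevant factor is $2\mathcal N_{n-1}-(2+\varepsilon_0)\partial_x\phi_n-(4+\varepsilon_0)\partial_x\psi_n$. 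Under that hypothesis it can be as small as $-\tfrac{2}{2+\varepsilon_0}\mathcal N_{n-1}$, so the induction does not close.

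The missing idea is to keep the asymmetric target and propagate the two quantities you wrote at the very start, $J_n=(\partial_t+\lambda\partial_x)\phi_n$ and $\tilde J_n=(\partial_t-\lambda\partial_x)\phi_n$, together with their analogues $L_n,\tilde L_n$ for $\psi_n$. This is the paper's proof. Because $\partial_t\pm\lambda\partial_x$ has constant coefficients, it commutes with $D_\mp$, so
\[
D_-J_{n+1}=(\partial_t+\lambda\partial_x)\mathcal N_n=\zeta_t'(\phi_n+\psi_n)\,(J_n+L_n)+\frac{\mu}{2(1+t)^2}(\phi_n+\psi_n)\ge0.
\]
Likewise $D_-\tilde J_{n+1}$, $D_+L_{n+1}$ and $D_+\tilde L_{n+1}$ equal $\zeta_t'$ times $\tilde J_n+\tilde L_n$ or $J_n+L_n$, plus the same nonnegative term.

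At $t=0$ one has $J_{n+1}=\mathcal N_n+(2+\varepsilon_0)f'$, $\tilde J_{n+1}=\mathcal N_n-\varepsilon_0 f'$, $L_{n+1}=\mathcal N_n+\varepsilon_0 g'$ and $\tilde L_{n+1}=\mathcal N_n-(2+\varepsilon_0)g'$. All are nonnegative by $(A_4)$, since $\mathcal N_n(\cdot,0)\ge\zeta_0(\gamma_1+\gamma_2)$. Integrating along characteristics then closes the induction for all four quantities at once.

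In your integral language, this is just the combination of $-Y/(2+\varepsilon_0)\le X\le Y/\varepsilon_0$ with $(2+\varepsilon_0)|f'(x+t)|\le\mathcal N_n(x+t,0)$. That gives $\mathcal N_n+(2+\varepsilon_0)\partial_x\phi_{n+1}\ge0$ and $\mathcal N_n-\varepsilon_0\partial_x\phi_{n+1}\ge0$, which is precisely the condition you derived and then discarded.
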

\begin{proof}
We proceed by carefully analyzing the evolution of the derivatives. Set $\lambda = 1 + \varepsilon_0$ and define for each $n \in \mathbb{N} \cup \{0\}$:
$$
\begin{aligned}
J_n &= \partial_t \phi_n + \lambda \partial_x \phi_n, & \tilde{J}_n &= \partial_t \phi_n - \lambda \partial_x \phi_n, \\
L_n &= \partial_t \psi_n + \lambda \partial_x \psi_n, & \tilde{L}_n &= \partial_t \psi_n - \lambda \partial_x \psi_n.
\end{aligned}
$$
Note that for $n=0$, we have trivial initial conditions: $J_0 = \tilde{J}_0 = L_0 = \tilde{L}_0 = 0$ on $K_{R^*,T^*}$.

Let us start with the case $n=0$. For the first iteration, we compute at $t=0$:
$$
J_1(x,0) = \partial_t \phi_1(x,0) + \lambda \partial_x \phi_1(x,0) = (2+\varepsilon_0)f^\prime(x) + 2^{-p}(\gamma_1+\gamma_2)^p - \frac{\mu}{2}(\gamma_1+\gamma_2),
$$
which is nonnegative by assumption $(A_4)$. The evolution satisfies:
$$
D_- J_1 = \partial_t D_- \phi_1 + \lambda \partial_x D_- \phi_1 = \frac{\mu}{2}(\gamma_1+\gamma_2)\frac{1}{(1+t)^2} \geq 0.
$$
Hence $J_1 \geq 0$ throughout $K_{R^*,T^*}$. Similar computations show $\tilde{J}_1, L_1, \tilde{L}_1 \geq 0$.

Now, assume that $J_n, L_n \geq 0$ for some $n \geq 1$. At $t=0$:
$$\begin{aligned}
J_{n+1}(x,0) &= \partial_t \phi_{n+1}(x,0) + \lambda \partial_x \phi_{n+1}(x,0) \\ &= (2+\varepsilon_0)f^\prime(x) + 2^{-p}(f(x)+g(x))^p - \frac{\mu}{2}(f(x)+g(x)) \geq 0,
\end{aligned}
$$
by $(A_4)$. For $t>0$, we compute:
$$
D_- J_{n+1} = \partial_t(\partial_t \phi_{n+1} + \lambda \partial_x \phi_{n+1}) - \partial_x(\partial_t \phi_{n+1} + \lambda \partial_x \phi_{n+1}),
$$
which expands to:
$$
D_- J_{n+1} = p2^{-p}(\partial_t \phi_n + \partial_t \psi_n)(\phi_n + \psi_n)^{p-1} - \frac{\mu}{1+t}\frac{\partial_t \phi_n + \partial_t \psi_n}{2} + \frac{\mu}{(1+t)^2}\frac{\phi_n + \psi_n}{2} 
$$
$$
- \lambda p2^{-p}(\phi_n + \psi_n)^{p-1}(\partial_x \phi_n + \partial_x \psi_n) + \frac{\mu}{1+t}\frac{\partial_x \phi_n + \partial_x \psi_n}{2}
$$
$$
= \left(\frac{p}{2}\right)^p (\phi_n + \psi_n)^{p-1}(J_n + L_n) - \frac{\mu}{1+t}(J_n + L_n) + \frac{\mu}{(1+t)^2}\frac{\phi_n + \psi_n}{2} \geq 0,
$$
since $\gamma_1 + \gamma_2 \geq \max(\mu^{1/(p-1)}(2/p)^{p/(p-1)},A)$, where $A$ is given by Lemma \ref{lem21}. Thus $J_{n+1} \geq 0$. Parallel arguments hold for $L_{n+1}, \tilde{J}_{n+1}, \tilde{L}_{n+1}$.

\medskip
\noindent
By induction, $J_n, \tilde{J}_n, L_n, \tilde{L}_n \geq 0$ for all $n \in \mathbb{N}$ on $K_{R^*,T^*}$, proving the lemma.
\end{proof}

\subsection*{Proof of Existence and Regularity of $\phi$ and $\psi$.}

Let $(x,t) \in K_{R^*, T^*}$. Since $\{\phi_n(x,t)\}_{n=0}^\infty$ and $\{\psi_n(x,t)\}_{n=0}^\infty$ are increasing sequences in $n$, we have
\begin{equation}
\left\{
  \begin{aligned}
    \lim_{n \to \infty} \phi_n(x,t) &= \sup_{n \in \mathbb{N}} \phi_n(x,t) = \phi(x,t), \\
    \lim_{n \to \infty} \psi_n(x,t) &= \sup_{n \in \mathbb{N}} \psi_n(x,t) = \psi(x,t).
  \end{aligned}
    \label{2.3}
    \right.
\end{equation}
From Lemma~\ref{lem:2.1}, it follows that $\phi$ and $\psi$ increase monotonically in $t$. Hence, there exists a function $T(x)$ such that
$$
  T(x) = \sup \{\, t \in (0, T^*) : (\phi + \psi)(x,t) < \infty \,\}, \qquad x \in B_{R^*}.
$$
We define
\begin{equation}
    \label{omega}
  \Omega = \{\, (x,t) : x \in B_{R^*}, \; 0 < t < T(x) \,\}.
\end{equation}

\begin{remark}
\emph{We will show in Section \ref{sec5} that $T$ is the blow-up curve for $\phi$ and $\psi$.}
\end{remark}

\begin{lemma}\label{lemma 2.3}
  Under the assumptions $(A_2)$--$(A_4)$, the pair $(\phi, \psi)$,  defined by \eqref{2.3}, is the unique solution of \eqref{1.5} in the class $(\mathcal{C}^{3,1}(\Omega))^2$.
\end{lemma}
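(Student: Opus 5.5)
The proof splits into three parts: a local bound on compact subsets of $\Omega$, passage to the limit in \eqref{2.2}, and uniqueness by Gronwall. Regularity is bootstrapped from the integral form.

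\emph{Step 1 (uniform bounds on compacts).} Fix $(x_0,t_0)\in\Omega$ and let $K$ be the closed backward characteristic triangle with apex $(x_0,t_0)$, intersected with $\{t\ge 0\}$. First I show that $\phi+\psi$ is bounded on $K$. By Lemma \ref{lem:2.1}, $\phi$ and $\psi$ are nondecreasing in $t$. By Lemma \ref{em}, $\partial_t\phi_n\ge(1+\varepsilon_0)|\partial_x\phi_n|$, and the same holds for $\psi_n$. Along any direction of slope at most $1$ in absolute value, this comparison passes to the limit as a one-sided Lipschitz-type inequality. It gives $\phi(y,s)\le\phi(x_0,t')$ whenever $|y-x_0|\le (1+\varepsilon_0)(t'-s)$, and the same for $\psi$. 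Since $t_0<T(x_0)$, I pick $t'\in(t_0,T(x_0))$. Then every point of $K$ lies in the backward cone of $(x_0,t')$, so $\phi+\psi\le(\phi+\psi)(x_0,t')<\infty$ on $K$. Hence the monotone sequences $\phi_n,\psi_n$ are uniformly bounded on $K$ by some constant $M$.

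\emph{Step 2 (limit and regularity).} On $K$ the nonlinearity $\mathcal N_n$ is bounded. Monotone convergence in \eqref{2.2} shows that $(\phi,\psi)$ satisfies the integral system with $\mathcal N(\phi,\psi)$. For regularity, I differentiate \eqref{2.2} up to four times in $x$ and $t$. Because $\phi_n+\psi_n\ge\gamma_1+\gamma_2>0$, the map $s\mapsto s^p$ is $\mathcal C^4$ on the relevant range, and $f,g\in\mathcal C^4$ by $(A_3)$. A Gronwall argument on the differentiated iterates, carried out successively for $k=1,2,3$ derivatives, gives bounds on $\partial^k\phi_n,\partial^k\psi_n$ in $K$ that are uniform in $n$. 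The constants depend only on $M$, $t_0$, $\mu$, $p$ and $\|f,g\|_{\mathcal C^4}$. The same argument applied to the fourth-order derivatives bounds them as well, which makes the third derivatives uniformly Lipschitz.

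Arzel\`a--Ascoli then gives $\mathcal C^3$ convergence along a subsequence. Since the whole sequence converges pointwise, the full sequence converges, and the limit lies in $\mathcal C^{3,1}(K)$. The integral equations can then be differentiated, so $(\phi,\psi)$ solves \eqref{1.5} classically. Covering $\Omega$ by such triangles gives $(\phi,\psi)\in(\mathcal C^{3,1}(\Omega))^2$.

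\emph{Step 3 (uniqueness).} Let $(\tilde\phi,\tilde\psi)$ be another $\mathcal C^{3,1}$ solution in $\Omega$ with the same data. On a triangle $K$ as above, both solutions are bounded. The right-hand side of \eqref{1.5} is then Lipschitz in $(\phi,\psi)$ with a constant depending on the bound and on $\mu$. Set $w(t)=\sup_{K\cap\{s\le t\}}(|\phi-\tilde\phi|+|\psi-\tilde\psi|)$. Subtracting the integral forms gives $w(t)\le C\int_0^t w$, so $w\equiv0$ by Gronwall.

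\emph{Main obstacle.} The delicate point is Step 1: turning the derivative cone condition of Lemma \ref{em} into finiteness of $\phi,\psi$ on the full backward triangle of each point of $\Omega$, using only that $\phi+\psi$ is finite at the apex column $x=x_0$. I will do this first at the level of the iterates, where the functions are smooth and the directional-derivative argument is rigorous, and then pass to the monotone limit. This avoids needing any regularity of $\phi$ itself before it is established.
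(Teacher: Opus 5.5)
Your proposal is correct. Steps 2 and 3 follow the paper's outline: uniform-in-$n$ bounds on the iterates and their derivatives from linear integral inequalities along characteristics, then a Gronwall uniqueness argument. Step 1, however, takes a genuinely different route.

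The paper proves the local bound \eqref{eq:2.4} by a Caffarelli--Friedman-type contradiction. If $\phi+\psi$ were unbounded on the backward cone of $(\tilde x,\tilde t)$, the set $Y_{\tilde x}$ of points whose blow-up time falls inside that cone would have positive measure. Integrating $\mathcal N_n$ along the two characteristics through $(\tilde x,\tilde t)$ would then make $(\phi_{n+1}+\psi_{n+1})(\tilde x,\tilde t)$ diverge.

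You instead apply the cone condition of Lemma~\ref{em} directly to the smooth iterates. They are nondecreasing along every segment with $|dx/dt|\le 1$ traversed forward in time, so $\phi_n\le\phi(x_0,t')$ and $\psi_n\le\psi(x_0,t')$ on the whole backward triangle. This is shorter and fully rigorous. It also avoids the paper's unjustified identification of the failure of \eqref{eq:2.4} with $m(Y_{\tilde x})>0$, which is made before $T$ is known to be continuous. It costs nothing, since Lemma~\ref{em} holds under the same hypotheses $(A_2)$--$(A_4)$.

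Two small corrections to Step 1:
\begin{itemize}
\item Only slopes $\le 1$ are needed. Allowing slope $1+\varepsilon_0$ could take the segment outside the convex set $K_{R^*,T^*}$ where Lemma~\ref{em} applies.
\item Monotonicity in $t$ comes from Lemma~\ref{em}, not from Lemma~\ref{lem:2.1}, which concerns monotonicity in $n$.
\end{itemize}

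In Step 2, the paper shows the derivative sequences are Cauchy via factorial estimates. You instead bound derivatives up to order four and combine Arzel\`a--Ascoli with the known pointwise limit. Both give $\mathcal C^{3,1}$. Your use of the lower bound $\phi_n+\psi_n\ge\gamma_1+\gamma_2$ to control $s\mapsto s^p$ is more careful than the paper's factor $C_0^{p-2}$, which is not an upper bound when $p<2$.

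Finally, as in the paper, your uniqueness step tacitly needs the competitor to be defined on the whole backward triangle. That triangle exits $B_{R^*}\times(0,\infty)$ when $x_0$ is near $\pm R^*$. Uniqueness should therefore be read on the domain of dependence (inside $K_{R^*,T^*}$) rather than literally on $\Omega$.
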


\begin{proof}
  Define the set
  $$
    B(t) = \bigl\{ x \in B_{R^*+T^*}: \lvert x - \tilde{x} \rvert \le \tilde{t}-t \bigr\}, \quad (\tilde{x}, \tilde{t}) \in \Omega.
  $$

  \medskip
  \noindent\textbf{(Proof of regularity).}  
  We first show that \((\phi, \psi)\) is a \(\mathcal{C}^{3,1}(\overline{\Omega})^2\) solution of \eqref{1.5}.  
  The proof is carried out in two steps.

  \smallskip
  \noindent\textbf{Step 1.}  
  Fix \((x, \tilde{t}\,) \in \Omega\).  
  We claim that there exists a constant \(M_0 > 0\) such that
  \begin{equation}
    \label{eq:2.4}
    \lVert \phi + \psi \rVert_{L^\infty(B(t))} \le M_0, \quad \text{for all } t \in [0, \tilde{t}\,].
  \end{equation}

  \noindent We argue by contradiction. Define  
  $$
    Y_{\tilde{x}} = \{ x \in B_{R^*} : \lvert x - \tilde{x} \rvert \le \tilde{t} - T(x) \},
  $$
  where \(m\) denotes the 1-dimensional Lebesgue measure.  
  Suppose \eqref{eq:2.4} fails, i.e., \(m(Y_{\tilde{x}}) > 0\).  
  By the monotonicity of \(\phi_n + \psi_n\) in \(t\), we have  
  $$
    (x, t) \notin \Omega \quad \text{if } x \in Y_{\tilde{x}} \text{ and } t = x + \tilde{t} - \tilde{x} \text{ or } t = -x + \tilde{t} + \tilde{x}.
  $$
  Hence, either \(m(Y_{\tilde{x},+}) > 0\) or \(m(Y_{\tilde{x},-}) > 0\), where  
  $$
    Y_{\tilde{x},+} = \{ s \in (0, \tilde{t}) : s = x + \tilde{t} - \tilde{x}, \ x \in Y_{\tilde{x}} \},  
  $$
  $$
    Y_{\tilde{x},-} = \{ s \in (0, \tilde{t}) : s = -x + \tilde{t} + \tilde{x}, \ x \in Y_{\tilde{x}} \}.
  $$
  However, evaluating the iteration yields  
  $$
    \infty > (\phi_{n+1} + \psi_{n+1})(\tilde{x}, \tilde{t}) 
    \geq \int_{Y_{\tilde{x},+}} \left( 2^{-p} \lvert \phi_n + \psi_n \rvert^p - \frac{\mu}{1+t} \frac{\phi_n + \psi_n}{2} \right)(x + \tilde{t} - s, s) \, ds  
  $$
  $$
    + 2^{-p} \int_{Y_{\tilde{x},-}} \left( 2^{-p} \lvert \phi_n + \psi_n \rvert^p - \frac{\mu}{1+t} \frac{\phi_n + \psi_n}{2} \right)(-x + \tilde{t} + \tilde{x}, s) \, ds \to \infty \quad \text{as } n \to \infty,
  $$
  which is a contradiction. Thus, \eqref{eq:2.4} holds.

   \noindent\textbf{Step 2.} 
We now prove that $(\phi, \psi)$ is a $\mathcal{C}^{3,1}(\overline{\Omega})^2$ solution of \eqref{1.5}.  

Fix $(\tilde{x}, \tilde{t}) \in \Omega$ and set $K = K(x, \tilde{t})$.  
By localizing the problem, it suffices to show that $(\phi, \psi)$ is a $\mathcal{C}^{3,1}(\overline{K})^2$ solution of \eqref{1.5}.  

First, we establish the uniform convergence of $\phi_n$ to $\phi$ and $\psi_n$ to $\psi$ in $K = K(x, \tilde{t})$.  

By Step~1, there exists a constant $C_0 = C_0(x, \tilde{t}) > 0$ such that
\begin{equation}
  \label{eq:2.51}
  \|\phi_n + \psi_n\|_{L^\infty(B(t))} \leq C_0 \quad \text{for all } t \in [0, \tilde{t}\,] \text{ and } n \in \mathbb{N}.
\end{equation}

For $t \in [0, \tilde{t}\,]$ and $n \in \mathbb{N}$, we estimate the differences:
\begin{align*}
&\|\phi_{n+1} - \phi_n\|_{L^\infty(B(t))} + \|\psi_{n+1} - \psi_n\|_{L^\infty(B(t))} \\
&\quad \leq (p C_0^{p-1} + \mu) \int_0^t \left( 
\|\phi_n - \phi_{n-1}\|_{L^\infty(B(s_1))} + \|\psi_n - \psi_{n-1}\|_{L^\infty(B(s_1))} 
\right) ds_1 \\
&\quad \leq (p C_0^{p-1} + \mu)^2 \int_0^t \int_0^{s_1} \left( 
\|\phi_{n-1} - \phi_{n-2}\|_{L^\infty(B(s_2))} + \|\psi_{n-1} - \psi_{n-2}\|_{L^\infty(B(s_2))} 
\right) ds_2 ds_1.
\end{align*}

Iterating this estimate yields
$$
\|\phi_{n+1} - \phi_n\|_{L^\infty(B(t))} + \|\psi_{n+1} - \psi_n\|_{L^\infty(B(t))} 
\leq 4 C_0 \frac{\big((p C_0^{p-1} + \mu) T\big)^n}{n!} \to 0 \quad \text{as } n \to \infty.
$$

From \eqref{2.3}, we conclude the uniform convergence:
$$
\|\phi_n - \phi\|_{L^\infty(K_{x, \tilde{t}})} + \|\psi_n - \psi\|_{L^\infty(K_{x, \tilde{t}})} \to 0 \quad \text{as } n \to \infty.
$$

We now establish that $\phi, \psi \in W^{1,\infty}(K, (\tilde{x}, \tilde{t}))$. 

The directional derivatives satisfy the following recursive relations for $n \in \mathbb{N} \cup \{0\}$:
$$
\begin{cases}
D_- D_\theta \phi_{n+1} = p 2^{-p} (\phi_n + \psi_n)^{p-1} (D_\theta \phi_n + D_\theta \psi_n), \\
D_+ D_\theta \psi_{n+1} = p 2^{-p} (\phi_n + \psi_n)^{p-1} (D_\theta \phi_n + D_\theta \psi_n),
\end{cases}
$$
where $D_\theta v = (\cos \theta \, \partial_x + \sin \theta \, \partial_t)v$ is the directional derivative.

The initial conditions are given by:
$$
\begin{cases}
D_\theta \phi_{n+1}(x,0) = 
\begin{cases}
(\cos\theta+\sin\theta)\,f^\prime(x) + \sin\theta\cdot 2^{-p}(\gamma_1+\gamma_2)^p & (n = 0), \\
(\cos\theta+\sin\theta)\,f^\prime(x) + \sin\theta\cdot 2^{-p}(f+g)^p(x) & (n \geq 1),
\end{cases} \\[6pt]
D_\theta \psi_{n+1}(x,0) = 
\begin{cases}
(\cos\theta-\sin\theta)\,g^\prime(x) + \sin\theta\cdot 2^{-p}(\gamma_1+\gamma_2)^p & (n = 0), \\
(\cos\theta-\sin\theta)\,g^\prime(x) + \sin\theta\cdot 2^{-p}(f+g)^p(x) & (n \geq 1).
\end{cases}
\end{cases}
$$
\noindent Define the function
$$
W(t) = (C_0^p + \mu C_0) \exp\left((pC_0^{p-1} + \mu + 1) t\right),
$$
which satisfies the integral equation
\begin{equation}
W(t) = C_0^p + \mu C_0 + \int_0^t (pC_0^{p-1} + \mu + 1) W(s) \, ds.
\label{eq:2.4a}
\end{equation}

We prove by induction that for all $t \in [0, \tilde{t}\,]$ and $n \in \mathbb{N} \cup \{0\}$:
\begin{equation}
\|D_\theta \phi_n(\cdot, t)\|_{L^\infty(B(t))} < W(t), \quad 
\|D_\theta \psi_n(\cdot, t)\|_{L^\infty(B(t))} < W(t).
\label{eq:2.5}
\end{equation}

First, observe that $D_\theta \phi_0 = D_\theta \psi_0 = 0 \leq W(t)$ for all $t \geq 0$. Assuming the inequalities hold for $n$, we establish the bound for $n+1$, that is
\begin{equation}
\|2^{-p} (\phi_n + \psi_n)^{p-1} (\cdot, t) (D_\theta \phi_n + D_\theta \psi_n) (\cdot, t)\|_{L^\infty(B(t))} \leq C_0^{p-1} W(t).
\label{eq:2.6}
\end{equation}
From assumption $(A_4)$, we derive the key estimate:
\begin{equation}
\begin{aligned}
\|D_\theta \phi_{n+1}(\cdot, t)\|_{L^\infty(B(t))} 
&\leq 2 \|f^\prime\|_{L^\infty(B(0))} + 2^{-p} \|f + g\|_{L^\infty(B(0))}^p + \frac{\mu}{2} \|f + g\|_{L^\infty(B(0))} \\
&\quad + \int_0^t p \left\| 2^{-p} (\phi_n + \psi_n)^{p-1} (D_\theta \phi_n + D_\theta \psi_n) \right\|_{L^\infty(B(s))} ds \\
&\quad + \int_0^t \left\| \frac{-\mu}{2(1+s)} (D_\theta \phi_n + D_\theta \psi_n) \right\|_{L^\infty(B(s))} ds \\
&\quad + \int_0^t \left\| \frac{\sin \theta}{2} \cdot \frac{\mu}{(1+s)^2} (\phi_n + \psi_n) \right\|_{L^\infty(B(s))} ds \\
&\leq C_0^p + \mu C_0 + \int_0^t (p C_0^{p-1} + \mu + 1) W(s) \, ds = W(t),
\end{aligned}
\label{eq:2.7}
\end{equation}
for $t \in [0, \tilde{t}\,]$. The same bound holds for $\|D_\theta \psi_{n+1}(\cdot, t)\|_{L^\infty(B(t))}$.

\medskip

Define $C_1 = (C_0^p+\mu C_0) \exp((pC_0^{p-1}+\mu+1) T)$, yielding:
\begin{equation}
\|D_\theta \phi_n(\cdot, t)\|_{L^\infty(B(t))} \leq C_1, \quad \text{and} \quad \|D_\theta \psi_n(\cdot, t)\|_{L^\infty(B(t))} \leq C_1.
\label{eq:2.8}
\end{equation}
The estimate of the difference estimates shows that
\begin{equation}
\begin{aligned}
&\|D_\theta \phi_{n+1} - D_\theta \phi_n\|_{L^\infty(B(t))} + \|D_\theta \psi_{n+1} - D_\theta \psi_n\|_{L^\infty(B(t))} \\
&\leq \int_0^t (\mu + p C_0^{p-1}) \big(\|D_\theta \phi_n - D_\theta \phi_{n-1}\|_{L^\infty(B(s_1))} + \|D_\theta \psi_n - D_\theta \psi_{n-1}\|_{L^\infty(B(s_1))}\big) ds_1 \\
&\quad + \int_0^t \big(\mu + 2p(p - 1) C_1 C_0^{p-2}\big) \big(\|\phi_n - \phi_{n-1}\|_{L^\infty(B(s_1))} + \|\psi_n - \psi_{n-1}\|_{L^\infty(B(s_1))}\big) ds_1 \\
&\leq 4C_1 \frac{\big((\mu + p C_0^{p-1}) T\big)^n}{n!} + 4 C_0 C_2 T^n \big(p C_0^{p-1}\big)^{n-1} \sum_{j=1}^n \frac{1}{j! (n - j)!} \to 0,
\end{aligned}
\label{eq:2.91}
\end{equation}
where $C_2 = \mu+2p(p - 1)C_1C_0^{p-2}$.

\medskip

The uniform convergence implies the existence of limits:
$$
\big(\phi_\theta^{(1)}, \psi_\theta^{(1)}\big) \in \big(L^\infty(K_-(\tilde{x}, \tilde{t}))\big)^2,
$$
with
$$
\|D_\theta \phi_n - \phi_\theta^{(1)}\|_{L^\infty(K_-(\tilde{x}, \tilde{t}))} + \|D_\theta \psi_n - \psi_\theta^{(1)}\|_{L^\infty(K_-(\tilde{x}, \tilde{t}))} \to 0.
$$

This establishes $\big(\phi, \psi\big) \in \big(W^{1, \infty}(K_-(\tilde{x}, \tilde{t}))\big)^2$. Iterating the argument yields the higher regularity:
$$
\big(\phi, \psi\big) \in \big(W^{4, \infty}(K_-(\tilde{x}, \tilde{t}))\big)^2 = \big(\mathcal{C}^{3, 1}(K_-(\tilde{x}, \tilde{t}))\big)^2.
$$

\noindent\textbf{Proof of Uniqueness.}
We now establish the uniqueness of the solution $(\phi, \psi)$ to \eqref{1.5}. 
 
Suppose there exist two solutions $(\phi_1, \psi_1)$ and $(\phi_2, \psi_2)$ with corresponding blow-up curves $T_1$ and $T_2$. Define their domains:
$$
\Omega_j = \{(x, t) : x \in B_{R^*}, \ 0 < t < T_j(x)\} \quad \text{for } j = 1,2.
$$
For any $(\tilde{x}, \tilde{t}) \in \Omega_1 \cap \Omega_2$, we have $K_-(\tilde{x}, \tilde{t}) \subset \Omega_1 \cap \Omega_2$. Following the approach in Step 2, we obtain the estimate:
$$\begin{aligned}
&\sup_{0 \leq t' \leq \tilde{t}} \Big( \|\phi_1 - \phi_2\|_{L^\infty(B(t'))} + \|\psi_1 - \psi_2\|_{L^\infty(B(t'))} \Big)\\
&\leq (t p C_0^{p-1} + \mu) \sup_{0 \leq t' \leq \tilde{t}} \Big( \|\phi_1 - \phi_2\|_{L^\infty(B(t'))} + \|\psi_1 - \psi_2\|_{L^\infty(B(t'))} \Big).
\end{aligned}
$$
For sufficiently small $t > 0$, this implies:
$$
\sup_{0 \leq t' \leq t} \Big( \|\phi_1 - \phi_2\|_{L^\infty(B(t'))} + \|\psi_1 - \psi_2\|_{L^\infty(B(t'))} \Big) = 0.
$$
Since $C_0$ is time-independent, we can iterate this argument to extend the uniqueness to all $t' \in [0, \tilde{t}\,]$:
$$
\sup_{0 \leq t' \leq \tilde{t}} \Big( \|\phi_1 - \phi_2\|_{L^\infty(B(t'))} + \|\psi_1 - \psi_2\|_{L^\infty(B(t'))} \Big) = 0.
$$
This establishes that
 $(\phi_1, \psi_1) \equiv (\phi_2, \psi_2)$ in $\Omega_1 \cap \Omega_2$ and
 $T_1(x) = T_2(x)$ for all $x \in B_{R^*}$,
completing the proof of uniqueness and hence the lemma.
\end{proof}
\begin{lemma}
Under the assumptions A\ref{A1}--A\ref{A4}, the blow-up time satisfies
$$
T(x) < T^* \quad \text{for all } x \in B_{R^*},
$$
where $T(x)$ and $T^*$ are two positive real numbers as defined in \eqref{eq:blowup_time}.
\end{lemma}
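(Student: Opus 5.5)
We prove $T(x) < T^*$ by comparison with the ODE system \eqref{1.6}. We show that along every vertical line $x=\text{const}$ in $K_{R^*,T^*}$, the sum $\phi+\psi$ dominates a function that blows up before $T_1$. Assumption A\ref{A1} then gives $T_1<T^*$.

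\textbf{Step 1: a differential inequality in $t$.} Fix $x\in B_{R^*}$ and set $y(t)=(\phi+\psi)(x,t)$ on $[0,T(x))$. By Lemma \ref{em} (passed to the limit $n\to\infty$ through the $\mathcal C^{3,1}$ convergence of Lemma \ref{lemma 2.3}), we have $\partial_t\phi\ge(1+\varepsilon_0)|\partial_x\phi|$, so $\partial_x\phi\ge -\partial_t\phi/(1+\varepsilon_0)$. Hence
\[
D_-\phi=\partial_t\phi-\partial_x\phi\le\Bigl(1+\tfrac1{1+\varepsilon_0}\Bigr)\partial_t\phi\le 2\,\partial_t\phi ,
\]
and similarly $D_+\psi\le 2\,\partial_t\psi$. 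Using \eqref{1.5}, this gives
\[
y'(t)\ \ge\ \tfrac12\Bigl(2^{1-p}y^p-\tfrac{\mu}{1+t}y\Bigr)\ \ge\ \tfrac12\bigl(2^{1-p}y^p-\mu y\bigr).
\]
The last inequality uses $y>0$ (Lemma \ref{lem:2.1}) and $1+t\ge1$.

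\textbf{Step 2: comparison.} The comparison function solves the Bernoulli equation \eqref{bern_eq} with time rescaled by the factor $\tfrac12$. Its initial value satisfies $y(0)=f(x)+g(x)\ge\gamma_1+\gamma_2>A$ by A\ref{A2} and \eqref{condgam}, and the right-hand side is positive in this range. A standard ODE comparison then shows that $y(t)$ stays above the explicit solution from Lemma \ref{lem21} evaluated at time $t/2$. That solution blows up at $2T_1(\gamma_1,\gamma_2)$. Consequently $T(x)\le 2T_1$.

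\textbf{The obstacle.} The factor $2$ is the difficulty: it is incompatible with A\ref{A1}, which only provides $T_1<T^*$. To avoid losing it, I would instead work with the approximations $\phi_n,\psi_n$ along characteristics and compare each with the ODE iterates started from constants. This requires showing that the constant iterates $\hat\phi_n(t),\hat\psi_n(t)$ (Picard iterates for \eqref{1.6}) satisfy $\phi_n\ge\hat\phi_n$ and $\psi_n\ge\hat\psi_n$ on $K_{R^*,T^*}$, by induction. The induction runs through \eqref{2.2}: $f\ge\gamma_1$, $g\ge\gamma_2$, and $\zeta$ is monotone above $\alpha$, exactly as in Lemma \ref{lem:2.1}. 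There is, however, a mismatch between the damping $\mu/(1+t)$ in \eqref{2.2} and the $\mu$ used in \eqref{1.6}. Since $\mu/(1+t)\le\mu$ and the damping enters with a minus sign, the PDE nonlinearity dominates the ODE one, which is the right direction for the comparison.

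\textbf{Conclusion.} Letting $n\to\infty$ yields $(\phi+\psi)(x,t)\ge\hat\phi(t)+\hat\psi(t)$. The right-hand side tends to $\infty$ as $t\to T_1^-$ by Lemma \ref{lem21}. Therefore $T(x)\le T_1<T^*$ by A\ref{A1}, with no loss of the factor $2$; I regard the Step 1 bound only as a fallback. The key point to verify carefully is that the characteristic cones in the induction remain inside $K_{R^*,T^*}$ for $t<T^*$, which holds because $|x|<R^*$.
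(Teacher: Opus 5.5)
Your proposal is correct, and once Step 1 is set aside it follows the paper's own argument. You compare $\phi_n,\psi_n$ by induction with the Picard iterates of the ODE system \eqref{1.6}, using $f\ge\gamma_1$, $g\ge\gamma_2$, $\mu/(1+t)\le\mu$ and the monotonicity of the nonlinearity above $\alpha$, and then conclude $T(x)\le T_1<T^*$; your explicit passage to the limit $n\to\infty$ spells out a step the paper leaves implicit.
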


\begin{proof}
We establish the result through a comparison argument with auxiliary functions.

Define sequences $\{\tilde{\phi}_n\}$ and $\{\tilde{\psi}_n\}$ by $\tilde{\phi}_0 = \gamma_1$, $\tilde{\psi}_0 = \gamma_2$ and for $n \geq 0$:
\begin{equation}
\begin{cases}
\dfrac{d}{dt}\tilde{\phi}_{n+1} = 2^{-p} |\tilde{\phi}_n + \tilde{\psi}_n|^p - \dfrac{\mu}{2}(\tilde{\phi}_n + \tilde{\psi}_n), & t > 0, \\[6pt]
\dfrac{d}{dt}\tilde{\psi}_{n+1} = 2^{-p} |\tilde{\phi}_n + \tilde{\psi}_n|^p - \dfrac{\mu}{2}(\tilde{\phi}_n + \tilde{\psi}_n), & t > 0, \\[6pt]
\tilde{\phi}_{n+1}(0) = \gamma_1, \quad \tilde{\psi}_{n+1}(0) = \gamma_2.
\end{cases}
\label{eq:2.9}
\end{equation}

For $(x,t) \in K_{R^*,T^*}$, we have:
\begin{align*}
\phi_1(x,t) - \tilde{\phi}_1(t) 
&= f(x+t) - \gamma_1 \\
&\quad + \int_0^t \left[2^{-p}|\phi_0+\psi_0|^p - \frac{\mu}{2(1+s)}(\phi_0+\psi_0)\right](x+t-s,s)\,ds \\
&\quad - \int_0^t \left[2^{-p}|\tilde{\phi}_0+\tilde{\psi}_0|^p - \frac{\mu}{2}(\tilde{\phi}_0+\tilde{\psi}_0)\right]\,ds \\
&\geq f(x+t) - \gamma_1 \geq 0.
\end{align*}
Similarly, $\psi_1(x,t) \geq \tilde{\psi}_1(t)$ in $K_{R^*,T^*}$.

Now, we proceed with the inductive step. For that purpose, we assume that $\phi_n(x,t) \geq \tilde{\phi}_n(t)$ and $\psi_n(x,t) \geq \tilde{\psi}_n(t)$ hold in $K_{R^*,T^*}$. Then, we have
\begin{align*}
\phi_{n+1}(x,t) - \tilde{\phi}_{n+1}(t) 
&\geq f(x+t) - \gamma_1 \\
&\quad + \int_0^t \left[2^{-p}|\phi_n+\psi_n|^p - \frac{\mu}{2}(\phi_n+\psi_n)\right](x+t-s,s)\,ds \\
&\quad - \int_0^t \left[2^{-p}|\tilde{\phi}_n+\tilde{\psi}_n|^p - \frac{\mu}{2}(\tilde{\phi}_n+\tilde{\psi}_n)\right]\,ds \geq 0.
\end{align*}
The same inequality holds for $\psi_{n+1}$. By induction, we conclude that
$$
\phi_n(x,t) \geq \tilde{\phi}_n(t), \quad \psi_n(x,t) \geq \tilde{\psi}_n(t) \quad \text{in } K_{R^*,T^*} \text{ for all } n \in \mathbb{N}.
$$

Since the comparison functions $\tilde{\phi}_n$, $\tilde{\psi}_n$ blow up before $T^*$, the same must hold for $\phi_n$, $\psi_n$, proving that $T(x) < T^*$.

This ends the proof of the lemma.
\end{proof}

\section{Blow-up rates of solutions and Lipschitz continuity of \texorpdfstring{$T$}{T}}\label{sec5}

Our objective is to prove the Lipschitz continuity of the blow-up time $T$ on $B_{R^*}$. The proof relies on Theorem~\ref{thm3.3}, so we begin by proving this theorem.
\begin{proof}[Proof of Theorem \ref{thm3.3}]
We begin by establishing the lower and upper bounds for $\partial_t \phi$. 

First, for all $(x,t) \in K_{R^*,T^*}$, consider the time derivative evolution for the iterative scheme \eqref{2.1}:
\begin{equation}\label{eq:3.6}
\begin{aligned}
D_- \partial_t \phi_{n+1} &= \partial_t D_- \phi_{n+1} = \partial_t \left(2^{-p}|\phi_n + \psi_n|^p\right) \\
&= p2^{-p}(\phi_n + \psi_n)^{p-1}(\partial_t \phi_n + \partial_t \psi_n) \\
&\quad + \frac{\mu}{(1+t)^2}\frac{\phi_n + \psi_n}{2} - \frac{\mu}{1+t}\frac{\partial_t \phi_n + \partial_t \psi_n}{2}.
\end{aligned}
\end{equation}
From Lemma \ref{em}, we obtain the following key inequality:
\begin{equation}\label{eq:3.7}
\begin{aligned}
D_- &\left(2^{-p}(\phi_n + \psi_n)^p - \frac{\mu}{1+t}\frac{\phi_n + \psi_n}{2}\right) \\
&= \left(p2^{-p}(\phi_n + \psi_n)^{p-1} - \frac{\mu}{2(1+t)}\right) 
   (\partial_t \phi_n - \partial_x \phi_n + \partial_t \psi_n - \partial_x \psi_n) \\
&\quad + \frac{\mu}{(1+t)^2}\frac{\phi_n + \psi_n}{2} \\
&\leq p2^{-p+1}(\phi_n + \psi_n)^{p-1}(\partial_t \phi_n + \partial_t \psi_n) \\
&\quad + \frac{\mu}{(1+t)^2}\frac{\phi_n + \psi_n}{2} - \frac{\mu}{1+t}(\partial_t \phi_n + \partial_t \psi_n), \quad \forall \ (x,t) \in K_{R^*,T^*}.
\end{aligned}
\end{equation}

The combination of \eqref{eq:3.6} and \eqref{eq:3.7} reveals the fundamental relationship between the time derivatives and the nonlinear terms. This structure allows us to establish the desired bounds through a comparison principle for the iterative scheme, a uniform control of the lower order terms involving $\mu$, and then a careful tracking of the nonlinear interactions between $\phi_n$ and $\psi_n$. This will be subsequently developed in the proof.

We establish the key bounds for $\partial_t \phi_{n+1}$ through careful analysis of the quantity
$$
J_{\phi,n+1} := 2\partial_t\phi_{n+1} - \left(2^{-p}(\phi_n+\psi_n)^p - \frac{\mu}{1+t}\frac{\phi_n+\psi_n}{2}\right).
$$
From \eqref{eq:3.6}--\eqref{eq:3.7}, we derive the crucial inequality
\begin{equation}
\label{eq:3.8}
D_- J_{\phi,n+1} \geq 0, \quad \text{for } (x,t) \in K_{R^*,T^*}.
\end{equation}
At $t=0$, using assumption $(A_4)$, we get  
\begin{equation}
\label{eq:3.9}
\begin{aligned}
J_{\phi,n+1}(x,0) &\geq 2f^\prime(x) + 2^{-p}(\gamma_1 + \gamma_2)^p - \frac{\mu}{2}(\gamma_1 + \gamma_2) \\
&\geq 0, \quad \forall \ x \in B_{R^*}.
\end{aligned}
\end{equation}
This implies $J_{\phi,n} \geq 0$ throughout $K_{R^*,T^*}$ for all $n \geq 1$.\\
Applying Lemma \ref{em}, we obtain the upper estimate
$$
\partial_t\phi_{n+1} \leq \frac{1+\varepsilon_0}{\varepsilon_0} \left(2^{-p}(\phi_n+\psi_n)^p - \frac{\mu}{1+t}\frac{\phi_n+\psi_n}{2}\right).
$$
Combining this with the nonnegativity of $J_{\phi,n}$ yields the two-sided bound
\begin{equation}
\label{eq:3.10}
2^{-p-2}(\phi_n+\psi_n)^p  \leq \partial_t\phi_{n+1} \leq \frac{1+\varepsilon_0}{\varepsilon_0} 2^{-p}(\phi_{n+1}+\psi_{n+1})^p,
\quad (x,t) \in K_{R^*,T^*}.
\end{equation}
The condition $\gamma_1+\gamma_2 > (\mu\,p\,2^{p})^{\frac{1}{p-1}}
$ ensures that $$2^{-p}(\phi_n+\psi_n)^p - \frac{\mu}{1+t}\frac{\phi_n+\psi_n}{2}\geq 2^{-p-2}(\phi_n+\psi_n)^p.$$ 
The estimate \eqref{eq:3.10} establishes \eqref{aa} and the analogous result for $\psi$ follows similarly through identical arguments.
We now establish the precise blow-up rates for the solution.
From \eqref{eq:3.10}, we have the key inequality:
$$
\frac{\partial(\phi+\psi)}{\partial t}(x,t) \leq 2^{-p+1}(1+\varepsilon_0)\varepsilon_0^{-1}(\phi+\psi)^p(x,t), \quad (x,t)\in\Omega.
$$
For any fixed $x_0 \in B_{R^*}$, integrating from $\tau$ to $T(x_0)$ yields:
$$
\int_{(\phi+\psi)(x_0,\tau)}^{\infty} \frac{du}{u^p} \leq 2^{-p+1}(1+\varepsilon_0)\varepsilon_0^{-1} (T(x_0)-\tau).
$$
This integration gives the upper bound in \eqref{bb} with constant:
$$
C_2 = 2^{p-1}(p-1)^{-1/(p-1)}.
$$
The lower bound follows similarly from the inequality:
$$
\frac{\partial(\phi+\psi)}{\partial t} \geq 2^{-p-2}(\phi+\psi)^p.
$$
Integration provides the lower bound in \eqref{bb} with:
$$
C_1 = 2\bigl((p-1)\varepsilon_0^{-1}(1+\varepsilon_0)\bigr)^{-1/(p-1)}.
$$
The estimates \eqref{bb} establish \eqref{ee} with $q=1/(p-1)$. The derivative estimates \eqref{cc} and \eqref{dd} follow by combining these results with the previous bounds on $\partial_t\phi$ and $\partial_t\psi$. 
\end{proof}
Having established the blow-up rates of solutions, we now prove the Lipschitz continuity of the blow-up time $T(x)$.
\begin{proof}[Proof of Theorem \ref{thm3.2}]
Fix an arbitrary $\varepsilon > 0$. From the lower bound in \eqref{ee}, there exists a constant $C_1 = C_1(p,\varepsilon_0) > 0$ such that for all $x \in B_{R^*}$ and $t \in [T(x)-\varepsilon, T(x)]$ we have:
$$
C_1(T(x)-t)^{-q} \leq (\phi+\psi)(x,t).
$$
In particular, at $t = T(x) - \varepsilon$ this yields:
$$
(\phi+\psi)(x, T(x)-\varepsilon) \geq C_1 \varepsilon^{-q}.
$$
Choose $M \geq C_1 \varepsilon^{-q}$ and define for each $x \in B_{R^*}$ the time $E(x)$ as the solution to:
$$
(\phi+\psi)(x, E(x)) = M, \quad \text{with } \ T(x)-\varepsilon \leq E(x) <  T(x).
$$
The existence of such $E(x)$ follows from the continuity of $(\phi+\psi)(x,\cdot)$, the blow-up property $\lim_{t\to T(x)^-} (\phi+\psi)(x,t) = +\infty$ and the intermediate value theorem.\\
Building upon the existence of the level set function $E(x)$ established previously, we now analyze its regularity properties to prove the Lipschitz continuity of $T(x)$.

\noindent\textbf{$(i)$ Continuity of the Level Set Function $E$.}
Let $\{x_n\}_n$ be a sequence converging to $x' \in B_{R^*}$, and set $t_n = E(x_n)$. For any convergent subsequence $t_{n_k} \to \eta$, the continuity of $\phi + \psi$ implies:
$$
(\phi + \psi)(x', \eta) = \lim_{k \to \infty} (\phi + \psi)(x_{n_k}, t_{n_k}) = M.
$$
Since $\partial_t(\phi + \psi) > 0$ in $\Omega$, the level set is unique, forcing $\eta = E(x')$. Thus $E$ is continuous at $x'$.

\medskip
\noindent\textbf{$(ii)$ Local Lipschitz Property of $E$.}
Fix $x' \in B_{R^*}$ and choose $h'>0$ such that $B(x',h') \subset \Omega$. By continuity, there exists $0<h''\le h'$ for which $(x,E(x))\in B(x',h'')$ for all $x\in (x'-h'',x'+h'')$. For any $x_1,x_2\in (x'-h'',x'+h'')$, define the auxiliary function
\(
H(\xi) := (\phi + \psi)\big(x_1 + \xi(x_2 - x_1), E(x_1) + \xi k\big), 
\) where $0 \leq \xi \leq 1$ and $ k := E(x_2) - E(x_1).$
Since $H(0) = H(1) = M$, Rolle's Theorem yields $\xi' \in (0,1)$ satisfying:
\begin{equation}
\label{eq:3.16}
(x_2 - x_1)\,\partial_x(\phi + \psi)(x_{\xi'}, t_{\xi'}) + k\,\partial_t(\phi + \psi)(x_{\xi'}, t_{\xi'}) = 0,
\end{equation}
where $(x_{\xi'}, t_{\xi'}) = (x_1 + \xi'(x_2 - x_1), E(x_1) + \xi' k)$. From Lemma \ref{em} and \eqref{eq:3.16}, we obtain:
$$
|E(x_2) - E(x_1)| = \left|\frac{\partial_x(\phi + \psi)}{\partial_t(\phi + \psi)}(x_{\xi'}, t_{\xi'})\right|\,|x_2 - x_1| \leq \frac{1}{1 + \varepsilon_0} |x_2 - x_1|.
$$

\medskip
\noindent\textbf{$(iii)$ Differentiability of $E$.}
The difference quotient satisfies:
$$
\frac{E(x + h) - E(x)}{h} = -\frac{\partial_x(\phi + \psi)}{\partial_t(\phi + \psi)}\big(x + \xi h, E(x) + \xi(E(x + h) - E(x))\big).
$$
By continuity of $\partial_x(\varphi + \psi)$, $\partial_t(\varphi + \psi)$ and $E$, we see that 
\(
E \in \mathcal{C}^{1}(B_{R^*})
\)
 with:
$$
E'(x) = -\frac{\partial_x(\phi + \psi)}{\partial_t(\phi + \psi)}(x, E(x)), \qquad x\in B_{R^*}.
$$
The mean value theorem then gives the global Lipschitz bound:
\begin{equation}
\label{eq:3.17}
|E(x') - E(x'')| \leq \frac{1}{1 + \varepsilon_0} |x' - x''| \quad \forall \ x', x'' \in B_{R^*}.
\end{equation}

\medskip
\noindent\textbf{$(iv)$ Lipschitz Continuity of $T$.}
For any $x', x'' \in B_{R^*}$ and $\varepsilon > 0$, we have
$$
|T(x') - T(x'')| \leq |T(x') - E(x')| + |E(x') - E(x'')| + |E(x'') - T(x'')| \leq 2\varepsilon + \frac{1}{1 + \varepsilon_0} |x' - x''|.
$$
Taking $\varepsilon \to 0^+$ establishes the desired inequality \eqref{eq:3.15}.
\end{proof}


To better understand the behavior near the blow-up surface, we first introduce a distance metric:

\begin{definition}\label{def:3.3}
By $d(x,t)$, we denote the distance from a point $(x,t)\in\Omega$ to $\Gamma=\{(x,T(x)):\,x\in B_{R^*}\}$.
\end{definition}

Having defined the distance function, we can now relate it to the time-to-blow-up:

\begin{remark}\label{rem:3.4}
\rm It follows from Lemma \ref{lemma 2.3} that
$$
\frac{T(x)-t}{\sqrt{2}}\;\leq\;d(x,t)\;\leq\;T(x)-t.
$$
By replacing $T(x)-t$ by $d(x,t)$ in Theorem \ref{thm3.3} we obtain the following corollary.
\end{remark}

This relationship between distance and time allows us to state more precise blow-up estimates:

\begin{corollary}\label{cor:3.5}
Assume that A\ref{A1}--A\ref{A4} hold. Then, there exist positive constants $C_1$ and $C_2$, depending only on $p$ and $\varepsilon_0$, such that
\begin{align}
C_1d^{-q}(x,t) &\le (\phi+\psi)(x,t) \le C_2d^{-q}(x,t), \label{eq:3.18}\\
C_1d^{-q-1}(x,t) &\le \partial_t\phi(x,t) \le C_2d^{-q-1}(x,t), \label{eq:3.19}\\
C_1d^{-q-1}(x,t) &\le \partial_t\psi(x,t) \le C_2d^{-q-1}(x,t), \label{eq:3.20}
\end{align}
where $q=1/(p-1)$.
\end{corollary}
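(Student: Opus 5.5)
The plan is to combine Theorem~\ref{thm3.3} with the two-sided comparison between $d(x,t)$ and $T(x)-t$ recorded in Remark~\ref{rem:3.4}. I will first justify that comparison directly from Theorem~\ref{thm3.2}, rather than from Lemma~\ref{lemma 2.3}. Fix $(x,t)\in\Omega$. The upper bound $d(x,t)\le T(x)-t$ is immediate, since $(x,T(x))\in\Gamma$ lies at distance $T(x)-t$ from $(x,t)$.

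For the lower bound, take any $(y,T(y))\in\Gamma$ and set $L=\frac{1}{1+\varepsilon_0}<1$. The Lipschitz estimate \eqref{eq:3.15} gives
\begin{equation}
T(x)-t \le |T(x)-T(y)| + (T(y)-t) \le L|x-y| + |T(y)-t|.
\end{equation}
By Cauchy--Schwarz, $L|x-y|+|T(y)-t|\le \sqrt{1+L^2}\,\bigl(|x-y|^2+|T(y)-t|^2\bigr)^{1/2}$. Taking the infimum over $y$ yields
\begin{equation}
\frac{T(x)-t}{\sqrt{1+L^2}} \le d(x,t),
\end{equation}
and since $L<1$ we get $\sqrt{1+L^2}\le\sqrt2$. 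This is the inequality of Remark~\ref{rem:3.4}. One subtlety remains: the infimum defining $d$ runs over $y\in B_{R^*}$ only. The Lipschitz inequality holds for all such $y$, so this causes no trouble.

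Next I substitute into \eqref{ee}, \eqref{bb} and \eqref{dd}, using $d\le T(x)-t\le\sqrt2\,d$. Because all exponents $-q$ and $-q-1$ are negative, the lower bounds convert as
\begin{equation}
(T(x)-t)^{-q}\ge (\sqrt2\,d)^{-q}=2^{-q/2}d^{-q},
\end{equation}
and the upper bounds convert as $(T(x)-t)^{-q}\le d^{-q}$. The same holds with $q+1$ in place of $q$.

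So \eqref{eq:3.18} follows from \eqref{ee} with the new constants $C_1 2^{-(q+1)/2}$ and $C_2$. In the same way \eqref{eq:3.19} follows from \eqref{bb}, and \eqref{eq:3.20} follows from \eqref{dd}. All of these constants still depend only on $p$ and $\varepsilon_0$.

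The only step requiring care is the lower bound on $d$, that is, making sure the Lipschitz constant from Theorem~\ref{thm3.2} is strictly below one so that the factor $\sqrt2$ is uniform. Everything after that is bookkeeping of constants.
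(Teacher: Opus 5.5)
Your proof is correct and follows the paper's route: establish $\frac{T(x)-t}{\sqrt2}\le d(x,t)\le T(x)-t$ (Remark~\ref{rem:3.4}), then substitute it into the bounds of Theorem~\ref{thm3.3}, tracking the sign of the negative exponents. You derive the lower bound on $d$ from the Lipschitz estimate \eqref{eq:3.15} with constant $\frac{1}{1+\varepsilon_0}<1$; this is the justification actually needed, since the paper cites Lemma~\ref{lemma 2.3} for the comparison but uses exactly your Lipschitz argument in the proof of Lemma~\ref{lem:scaled-estimates}.
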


We can further refine these estimates to examine each component of the solution separately:

\begin{remark}\label{lem:3.6}
\rm  Under assumptions A\ref{A1}--A\ref{A4}, there exist positive constants $C_1$ and $C_2$ (depending on $p,\mu $ and $\varepsilon_0$) such that for $q=1/(p-1)$, the following bounds hold uniformly in $\Omega$:
\begin{align}
C_1\bigl(T(x)-t\bigr)^{-q} &\le \phi(x,t) \le C_2\bigl(T(x)-t\bigr)^{-q}, \label{eq:3.21R}\\
C_1\bigl(T(x)-t\bigr)^{-q} &\le \psi(x,t) \le C_2\bigl(T(x)-t\bigr)^{-q}. \label{eq:3.22R}
\end{align}
In fact, we have the following estimates for $\phi(x,t)$ (the proof for $\psi(x,t)$ is analogous):  
By Corollary~\ref{cor:3.5} and Lemma~\ref{lemma 2.3}, there exist constants $c_1, c_2 > 0$ such that  
$$
\phi\bigl(x,T(x)-\varepsilon\bigr) \ge \int_{T(x)-2\varepsilon}^{T(x)-\varepsilon} 2^{-p-1}(\phi+\psi)^p \, ds \ge c_2 \varepsilon^{-q},
$$  
where the first inequality follows from the integral expression for $\phi$ and the second from the singularity analysis near $T(x)$. The upper bound $\phi(x,T(x)-\varepsilon) \leq C_2 \varepsilon^{-q}$ is immediate from Theorem~\ref{thm3.3}.
\end{remark}

\section{Linearity of the Blow-up Curve for Limit Solutions}\label{sec7}
In this section, we prepare the analytical tools and estimates that will be instrumental in establishing the $\mathcal{C}^1$-regularity of the blow-up curve  on $B_{R^*}$. The actual proof of this regularity result will be carried out in Section~\ref{section8}.
We will also establish Theorem \ref{th:3.6}, proving that the limiting blow-up curve is linear.

\subsection{Blow-up Limits and Regularity of the Blow-up Time}\label{sec6}
The key to our analysis lies in understanding the directional derivatives of the solutions. We work with the directional derivative operator:
$$
D_\theta = \sin\theta\,\partial_t + \cos\theta\,\partial_x, \quad \text{for } 0 \leq \theta < 2\pi.
$$
The following fundamental estimates control the growth of derivatives as we approach the blow-up surface:

\begin{lemma}\label{lem:4.1}
Under the assumptions A\ref{A1}--A\ref{A5}, there exist positive constants $\lambda_\alpha$ and $\lambda_\alpha^{*}$ (depending only on $p$, $\varepsilon_1$, and $\mu$) such that for all $(x,t) \in \Omega$ and $\alpha = 0,1,2,3$, we have:
\begin{equation}\label{eq:4.1}
\begin{aligned}
\max\bigl\{|D_\theta^{\alpha}\phi(x,t)|, |D_\theta^{\alpha}\psi(x,t)|\bigr\}
  &\leq \lambda_\alpha\bigl(\phi+\psi\bigr)^{p+(\alpha-1)/q}(x,t),  \\
  &\leq \lambda_\alpha^{*}\,d(x,t)^{-(pq+\alpha-1)},
\end{aligned}
\end{equation}
where $q=1/(p-1)$ represents the standard blow-up exponent.
\end{lemma}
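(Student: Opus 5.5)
The second inequality in \eqref{eq:4.1} follows from the first by Corollary~\ref{cor:3.5}. Indeed, $(\phi+\psi)\le C_2 d^{-q}$ gives
\[
(\phi+\psi)^{p+(\alpha-1)/q}\le C\, d^{-(pq+\alpha-1)} .
\]
The substance is therefore the first inequality. I would prove it by induction on $\alpha$, working on the approximating sequences $(\phi_n,\psi_n)$ of \eqref{2.1} and passing to the limit via the uniform convergence of derivatives established in Lemma~\ref{lemma 2.3}.

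\textbf{Base cases.} The case $\alpha=0$ is Remark~\ref{lem:3.6}, which gives $\phi,\psi\le \phi+\psi=(\phi+\psi)^{p-1/q}$.

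For $\alpha=1$, write $D_\theta=\sin\theta\,\partial_t+\cos\theta\,\partial_x$. Lemma~\ref{em} gives $|\partial_x\phi|\le \partial_t\phi/(1+\varepsilon_0)$, so $|D_\theta\phi|\le 2\partial_t\phi$. Theorem~\ref{thm3.3}, specifically \eqref{aa} and \eqref{cc}, bounds $\partial_t\phi$ and $\partial_t\psi$ by $C_2(\phi+\psi)^p$, which is the claim with exponent $p$.

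\textbf{Cases $\alpha=2,3$.} Differentiating \eqref{1.5} gives
\[
D_-D^\alpha_\theta\phi=D^\alpha_\theta\bigl(2^{-p}(\phi+\psi)^p-\tfrac{\mu}{2(1+t)}(\phi+\psi)\bigr),
\]
and similarly for $D_+D^\alpha_\theta\psi$. By Fa\`a di Bruno, the right-hand side equals
\[
p2^{-p}(\phi+\psi)^{p-1}\bigl(D^\alpha_\theta\phi+D^\alpha_\theta\psi\bigr)+R_\alpha ,
\]
where $R_\alpha$ consists of products of lower-order derivatives and of the harmless $\mu$-terms, with derivatives of $(1+t)^{-1}$ bounded. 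The induction hypothesis shows that $R_\alpha$ is bounded by $C(\phi+\psi)^{p+\alpha/q}$; the exponents add up because $p-1=1/q$.

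The plan is a comparison argument. I would set $W=\lambda_\alpha(\phi+\psi)^{\beta}$ with $\beta=p+(\alpha-1)/q$ and show that $W\pm D^\alpha_\theta\phi_{n+1}$ and $W\pm D^\alpha_\theta\psi_{n+1}$ are nonnegative. Three ingredients are needed:
\begin{itemize}
\item[(a)] At $t=0$ this holds by A\ref{A3} and by taking $\lambda_\alpha$ large relative to $\gamma_1+\gamma_2>1$.
\item[(b)] Along characteristics, $D_\mp W=\lambda_\alpha\beta(\phi+\psi)^{\beta-1}D_\mp(\phi+\psi)$.
\item[(c)] Lemma~\ref{em} and \eqref{aa} bound $D_\mp(\phi+\psi)$ from below by a multiple of $(\phi+\psi)^p$, namely of $\partial_t(\phi+\psi)(1-(1+\varepsilon_1)^{-1})$.
\end{itemize}
These give $D_\mp W\ge c\lambda_\alpha\beta(\phi+\psi)^{\beta-1+p}$, which must dominate $p2^{-p}(\phi+\psi)^{p-1}\cdot 2W+C(\phi+\psi)^{\beta+1/q}+\mu$-terms. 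Both sides scale like $(\phi+\psi)^{\beta+1/q}$. The comparison thus closes if $c\beta>p2^{1-p}+\mu(\gamma_1+\gamma_2)^{-1/q}$, with $\lambda_\alpha$ chosen large enough to absorb $R_\alpha$.

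\textbf{Main obstacle.} The hard step is this coefficient inequality. It is exactly where A\ref{A5} enters: the expression
\[
2^{-p}(2p-1)\Bigl(1+\tfrac{\varepsilon_1}{2(1+\varepsilon_1)}\Bigr)\Bigl(1-\tfrac1{2p}\Bigr)-2^{-p+1}p-\mu>0
\]
has precisely the shape "growth rate of the barrier minus linearized coefficient minus damping". I would try first to track the constants for $\alpha=1$ and $\beta\ge 2p-1$, which is the worst case, and verify that A\ref{A5} delivers the needed positivity. Since $\beta$ increases with $\alpha$, the cases $\alpha=2,3$ should then follow a fortiori, with the lower-order terms absorbed by enlarging $\lambda_\alpha$ and using $\phi+\psi\ge\gamma_1+\gamma_2>1$.
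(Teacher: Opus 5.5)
Your overall strategy matches the paper's. You run an induction over the iterates with the barrier $\lambda_\alpha(\phi_n+\psi_n)^{p+(\alpha-1)/q}$, control the initial data by A3, propagate along $D_\mp$ using Lemma~\ref{em} and the time-derivative bounds, and use A5 as the coefficient condition; the paper carries this out for $\alpha=2$ and calls $\alpha=3$ identical. The gap is in the one step you defer, and it is genuine. The lower bound you propose in (c) does not produce the constant in A5, and with the constants actually available it cannot close the comparison at all.

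In (c) you apply Lemma~\ref{em} to the whole sum and then \eqref{aa}. At the level of the iterates the available bound is \eqref{eq:3.10}, namely $\partial_t\phi_{n+1},\partial_t\psi_{n+1}\ge 2^{-p-2}(\phi_n+\psi_n)^p$. This gives $c=\frac{\varepsilon_1}{1+\varepsilon_1}2^{-p-1}$, and your requirement $c\beta>p2^{1-p}$ becomes $\frac{\varepsilon_1}{1+\varepsilon_1}(2p-1)>4p$ (resp.\ $\frac{\varepsilon_1}{1+\varepsilon_1}(3p-2)>4p$), which never holds. Even optimizing the constants along your route only yields $c=\frac{2\varepsilon_1}{2+\varepsilon_1}2^{-p}\bigl(1-\frac{1}{2p}\bigr)$. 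That degenerates as $\varepsilon_1\to 0$ and is not the factor appearing in A5, so A5 is not the hypothesis your inequality would need.

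The missing idea is to keep the equation for the component transported along the characteristic you integrate on. Write $D_-(\phi_{n+1}+\psi_{n+1})=D_-\phi_{n+1}+D_-\psi_{n+1}$, and set $S_n=\phi_n+\psi_n$.
\begin{itemize}
\item The first term equals $N_n:=2^{-p}S_n^p-\frac{\mu}{2(1+t)}S_n$ exactly. Since $S_n\ge\gamma_1+\gamma_2$, \eqref{condgam} gives $\mu 2^{p-1}S_n^{1-p}<\frac{1}{2p}$, hence $N_n\ge 2^{-p}\bigl(1-\frac{1}{2p}\bigr)S_n^p$.
\item Use Lemma~\ref{em} only on the transversal term. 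Since $D_+\psi_{n+1}=N_n$ forces $\partial_t\psi_{n+1}\ge\frac{1+\varepsilon_1}{2+\varepsilon_1}N_n$, you get $D_-\psi_{n+1}\ge\frac{\varepsilon_1}{1+\varepsilon_1}\partial_t\psi_{n+1}\ge\frac{\varepsilon_1}{2(1+\varepsilon_1)}N_n$.
\end{itemize}
Together with $S_{n+1}\ge S_n$ (Lemma~\ref{lem:2.1}), this gives
$D_-\bigl[\lambda_2 S_{n+1}^{2p-1}\bigr]\ge\lambda_2(2p-1)\bigl(1+\frac{\varepsilon_1}{2(1+\varepsilon_1)}\bigr)2^{-p}\bigl(1-\frac{1}{2p}\bigr)S_n^{3p-2}$.

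On the other side, the $\lambda_2$-proportional part of $|D_-D_\theta^2\phi_{n+1}|$ is at most $(p2^{1-p}+\mu)\lambda_2 S_n^{3p-2}$, using $S_n\ge 1$ for the damping term. A5 states exactly that the difference of these two coefficients is positive. The remaining terms are $\lambda_2$-independent multiples of $S_n^{3p-2}$ and are absorbed by enlarging $\lambda_2$. The case $\alpha=3$ then follows a fortiori, as you say. Note that the worst case is $\alpha=2$, $\beta=2p-1$, not $\alpha=1$.
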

\begin{proof}
The estimate \eqref{eq:4.1}$_2$ follows immediately from Corollary \ref{cor:3.5} once \eqref{eq:4.1}$_1$ is established. We therefore focus on proving \eqref{eq:4.1}$_1$.\\
From assumption A\ref{A3}, we obtain a uniform bound on the second derivatives:
\begin{equation}\label{eq:4.3}
\max_{x\in B_{R^{*}+T^{*}}} \bigl\{|f^{\prime\prime}(x)| + |g^{\prime\prime}(x)|\bigr\}  \leq\kappa(f+g)^{2p-1}(x),
\end{equation}
for some constant $\kappa > 0$.\\
The estimates for $\alpha=0$ and $\alpha=1$ follow directly from Lemma \ref{em} and Theorem \ref{thm3.3}.

\medskip
\noindent For $\alpha=2$, we first prove the following estimate by induction on $n$:
\begin{equation}\label{eq:4.4}
\begin{aligned}
\max\bigl\{ |D_\theta^{2}\phi_n(x,t)|, |D_\theta^{2}\psi_n(x,t)| \bigr\} 
&\leq \lambda_2(\phi_n + \psi_n)^{2p-1}(x,t), \\
&\quad \forall n \in \mathbb{N}\cup\{0\}, (x,t) \in K_{R^{*}+T^{*}}.
\end{aligned}
\end{equation}
where $\lambda_2 = \lambda_2(p,\mu,\varepsilon_1) > 0$.

\noindent\textit{Case $n=0$.} 
Since $D_\theta\phi_0 = D_\theta\psi_0 = 0$ in $K_{R^{*}+T^{*}}$, the claim \eqref{eq:4.4} holds trivially.
\noindent\textit{Inductive Step.} 
Assume \eqref{eq:4.4} holds for some $n \geq 0$. Using the case $\alpha=1$, we infer that
\begin{equation}\label{eq:4.5}
\begin{aligned}
|D_-D_\theta^{2}\phi_{n+1}(x,t)|
&= \Bigl| D_\theta^{2} \Bigl(2^{-p}(\phi_n+\psi_n)^p - \frac{\mu}{1+t} \frac{\phi_n+\psi_n}{2}\Bigr)(x,t) \Bigr| \\[4pt]
&\leq \bigl(2^{-p+2} p(p-1) \lambda_1^{2} + 2^{-p+1} p\lambda_2 + \mu  + 2\mu \lambda_1 + \mu \lambda_2 \bigr) \\
&\quad \times (\phi_n + \psi_n)^{3p-2}(x,t),
\end{aligned}
\end{equation}
for all $(x,t) \in K_{R^{*}+T^{*}}$.\\
From Lemma \ref{em}, we derive the key inequality:
\begin{equation}\label{eq:4.6}
\begin{aligned}
&D_-\left[\lambda_2(\phi_{n+1}+\psi_{n+1})^{2p-1}\right](x,t) \\
&= \lambda_2(2p-1)(\phi_{n+1}+\psi_{n+1})^{2p-2}(x,t) \cdot D_-(\phi_{n+1}+\psi_{n+1})(x,t) \\
&\geq \lambda_2(2p-1)\!\left(1+\frac{\varepsilon_1}{2(1+\varepsilon_1)}\right)
2^{-p}\!\left(1-\frac{1}{2p}\right)\,
(\phi_n+\psi_n)^{3p-2},
\end{aligned}
\end{equation}
 in $K_{R^{*}+T^{*}}$.\\
Define the auxiliary quantity:
$$
M_n(x,t) = \lambda_2(\phi_n+\psi_n)^{2p-1}(x,t) - D_\theta^{2}\phi_n(x,t).
$$
From  \eqref{eq:4.3}, we obtain the following initial data lower bound:
\begin{equation}\label{eq:4.7}
M_{n+1}(x,0) \geq (\lambda_2 - \kappa)(f+g)^{2p-1}(x), \quad x\in B_{R^{*}+T^{*}}.
\end{equation}
Combining Lemma \ref{em} with \eqref{eq:4.5} and \eqref{eq:4.6} yields
\begin{equation}\label{eq:4.8}
\begin{aligned}
D_- M_{n+1}(x,t) \ge\;&
\Bigg[
\lambda_2\!\left(
2^{-p}(2p-1)\!\left(1+\frac{\varepsilon_1}{2(1+\varepsilon_1)}\right)\!\left(1-\frac{1}{2p}\right)
-2^{-p+1}p-\mu
\right) \\
&\qquad\quad
-\Big(2^{-p+2}p(p-1)\lambda_1^{2}+\mu+2\mu\lambda_1\Big)
\Bigg]\,(\phi_n+\psi_n)^{3p-2}(x,t).
\end{aligned}
\end{equation}
By assumption A\ref{A5}, we have
$$
2^{-p}(2p-1)\!\left(1+\frac{\varepsilon_1}{2(1+\varepsilon_1)}\right)\!\left(1-\frac{1}{2p}\right)
-2^{-p+1}p-\mu >0.
$$
We now select $\lambda_2$ sufficiently large such that
\begin{equation}\label{eq:4.C2-choice}
\lambda_2 \geq \max\left\{
\kappa,\,
\frac{2^{-p+2}p(p-1)\lambda_1^{2}+\mu+2\mu\lambda_1}{2^{-p}(2p-1)\!\left(1+\frac{\varepsilon_1}{2(1+\varepsilon_1)}\right)\!\left(1-\frac{1}{2p}\right)
-2^{-p+1}p-\mu }
\right\}.
\end{equation}

With the above choice for $\lambda_2$, the estimates \eqref{eq:4.7} and \eqref{eq:4.8} infer that $M_{n+1}\geq 0$ in $K_{R^{*}+T^{*}}$, and consequently $M_n\geq 0$ for all $n\in\mathbb{N}\cup\{0\}$. This establishes that
$$
\lambda_2(\phi_n+\psi_n)^{2p-1} \geq D_\theta^{2}\phi_n \quad \text{in } K_{R^{*}+T^{*}}, \quad n\in\mathbb{N}.
$$
\noindent Similarly, since the argument is symmetric in $\phi_n$ and $\psi_n$, we also obtain
\begin{equation*}
\lambda_2(\phi_n+\psi_n)^{2p-1} \ge D_{\theta}^{2}\psi_n \quad \text{in } K_{R^{*}+T^{*}}, \quad n\in\mathbb{N}.
\end{equation*}
After possibly enlarging $\lambda_2$, we obtain the full set of inequalities
$$
\begin{cases}
\lambda_2(\phi_n+\psi_n)^{2p-1} \geq |D_\theta^{2}\phi_n|, \\
\lambda_2(\phi_n+\psi_n)^{2p-1} \geq |D_\theta^{2}\psi_n|,
\end{cases}
\quad (x,t)\in K_{R^{*}+T^{*}},\; n\in\mathbb{N}.
$$
Thus \eqref{eq:4.4} holds true. 
Taking $n$ to infinity we get  \eqref{eq:4.1}$_1$.
The case $\alpha=3$ follows by an identical argument.
\end{proof}
\medskip
Now, to analyze the local behavior near a point \( x_0\in B_{R^{*}} \), we introduce the following scaled functions for \( \lambda>0 \) and \( q=1/(p-1) \):
\begin{align}
\phi_\lambda(y,s)&=\lambda^{q}\,\phi\bigl(x_0+\lambda y,\,T(x_0)+\lambda s\bigr), \label{eq:scaled-phi}\\
\psi_\lambda(y,s)&=\lambda^{q}\,\psi\bigl(x_0+\lambda y,\,T(x_0)+\lambda s\bigr). \label{eq:scaled-psi}
\end{align}
The scaled functions satisfy:
\begin{equation}
\begin{cases}
D_-\phi_\lambda
   =2^{-p}\bigl(\phi_\lambda+\psi_\lambda\bigr)^{p}
     -\dfrac{\mu}{1+t}\dfrac{\phi_\lambda+\psi_\lambda}{2},\\[6pt]
D_+\psi_\lambda
   =2^{-p}\bigl(\phi_\lambda+\psi_\lambda\bigr)^{p}
     -\dfrac{\mu}{1+t}\dfrac{\phi_\lambda+\psi_\lambda}{2},
\end{cases}
\label{fk}
\end{equation}
for \((y,s)\in\Omega_\lambda\), where
\(
\Omega_\lambda
  :=\bigl\{(y,s)\in\mathbb{R}^{2}:
           (x_0+\lambda y,\;T(x_0)+\lambda s)\in\Omega\bigr\}
\).
Furthermore, we introduce the distance \( d_\lambda(y,s) \) from \( (y,s) \) to the scaled blow-up surface
$$
\Gamma_\lambda
  :=\bigl\{(y,s)\in\mathbb{R}^{2}:s=T_\lambda(y)\bigr\},
$$
which plays a crucial role in our analysis. 
The following lemma establishes the relationship between the original and scaled blow-up curves.

\begin{lemma}\label{lem:4.2}
Let $\lambda>0$. The scaled blow-up time \( T_\lambda \) relates to the original \( T \) through:
\begin{equation}
T_\lambda(y)
  =\frac{T(x_0+\lambda y)-T(x_0)}{\lambda}. 
  \label{fkk}
\end{equation}
\end{lemma}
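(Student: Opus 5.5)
The plan is to argue directly from the definitions, since Lemma \ref{lem:4.2} is essentially a change of variables. First we fix what $T_\lambda$ means. It is the blow-up time of the rescaled pair $(\phi_\lambda,\psi_\lambda)$ defined in \eqref{eq:scaled-phi}--\eqref{eq:scaled-psi}:
$$
T_\lambda(y)=\sup\{s:\ (\phi_\lambda+\psi_\lambda)(y,s)<\infty\}.
$$
Equivalently, $\Gamma_\lambda$ is the upper boundary of $\Omega_\lambda$ in the $s$-direction. We work on the admissible range of $y$, namely those with $x_0+\lambda y\in B_{R^*}$.

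The first step is to describe $\Omega_\lambda$ explicitly. By \eqref{omega}, $(x_0+\lambda y,\,T(x_0)+\lambda s)\in\Omega$ holds if and only if
$$
0<T(x_0)+\lambda s<T(x_0+\lambda y).
$$
Since $\lambda>0$, this is equivalent to
$$
-\frac{T(x_0)}{\lambda}<s<\frac{T(x_0+\lambda y)-T(x_0)}{\lambda}.
$$
So for each admissible $y$, the fibre of $\Omega_\lambda$ over $y$ is an interval whose upper endpoint is exactly the right-hand side of \eqref{fkk}.

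The second step is to check that this upper endpoint is indeed the blow-up time of the rescaled solution, and not merely the edge of the domain.
\begin{itemize}
\item For $s$ below the endpoint, the point $(x_0+\lambda y,\,T(x_0)+\lambda s)$ lies in $\Omega$, where $\phi+\psi$ is finite by Lemma \ref{lemma 2.3}. Multiplying by the finite factor $\lambda^{q}$ keeps $(\phi_\lambda+\psi_\lambda)(y,s)$ finite.
\item As $s$ increases to the endpoint, $t=T(x_0)+\lambda s$ increases to $T(x_0+\lambda y)$. The blow-up estimate \eqref{ee} of Theorem \ref{thm3.3} then gives
$$
(\phi_\lambda+\psi_\lambda)(y,s)\ \ge\ C_1\lambda^{q}\bigl(T(x_0+\lambda y)-T(x_0)-\lambda s\bigr)^{-q}\lambda^{0}\ \to\ \infty.
$$
\end{itemize}
Since $\phi_\lambda+\psi_\lambda$ is increasing in $s$, the supremum defining $T_\lambda(y)$ equals this endpoint, which proves \eqref{fkk}.

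The third step is a consistency remark: the rescaled distance $d_\lambda$ satisfies $d_\lambda(y,s)=\lambda^{-1}d(x_0+\lambda y,\,T(x_0)+\lambda s)$. This follows because the affine map $(y,s)\mapsto(x_0+\lambda y,\,T(x_0)+\lambda s)$ sends $\Gamma_\lambda$ onto $\Gamma$ and scales Euclidean distances by $\lambda$. This remark is not needed for \eqref{fkk} itself, but it is useful later.

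I expect no real obstacle. The only delicate point is the second step: showing that $T_\lambda$ coincides with the boundary of $\Omega_\lambda$ and not just with some interior point of it. This is handled by the positivity of $\lambda$ together with the lower blow-up bound \eqref{ee}. A side consequence is that $T_\lambda(0)=0$, and that $T_\lambda$ inherits the Lipschitz constant $(1+\varepsilon_0)^{-1}$ from Theorem \ref{thm3.2}.
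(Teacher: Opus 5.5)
Your proposal is correct and follows essentially the same route as the paper: the change of variables $t=T(x_0)+\lambda s$ combined with the lower blow-up bound shows that the rescaled solution blows up exactly at $s=\bigl(T(x_0+\lambda y)-T(x_0)\bigr)/\lambda$. The paper applies Remark~\ref{lem:3.6} to $\phi$ alone, while you apply \eqref{ee} to $\phi+\psi$; your explicit description of $\Omega_\lambda$ and your check that $\phi_\lambda+\psi_\lambda$ stays finite below the endpoint just spell out what the paper leaves implicit (the stray factor $\lambda^{0}$ in your display is harmless).
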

\begin{proof}
From Remark \ref{lem:3.6}, we have constants \( C_1,C_2>0 \) (depending on \( p \) and \( \varepsilon_1 \)) such that:
$$
\begin{aligned}
\lambda^{q}C_1\,
  \bigl[ T(x_0 + \lambda y) - (T(x_0) + \lambda s) \bigr]^{-q}
&\leq \lambda^{q}\phi\bigl(x_0 + \lambda y,\, T(x_0) + \lambda s\bigr) \\[4pt]
&\leq \lambda^{q}C_2\,
  \bigl[ T(x_0 + \lambda y) - (T(x_0) + \lambda s) \bigr]^{-q}.
\end{aligned}
$$

The key observation is the scaling identity:
\begin{equation}
\lambda^{q}
  \bigl[T(x_0+\lambda y)-(T(x_0)+\lambda s)\bigr]^{-q}
= \left(\frac{T(x_0+\lambda y)-T(x_0)}{\lambda}-s\right)^{-q}.
\end{equation}
As $s \to \frac{T(x_0+\lambda y)-T(x_0)}{\lambda}$, $\phi_\lambda(y,s)\to +\infty$. Hence, $T_\lambda(y)=\frac{T(x_0+\lambda y)-T(x_0)}{\lambda}$.
The same argument applies to \(\psi_\lambda\).\\
\end{proof}
\begin{lemma}[Uniform scaled estimates]\label{lem:scaled-estimates}
Fix $x_0\in B_{R^*}$ and set $q=\tfrac1{p-1}$. For $\lambda>0$, let $\Omega_\lambda=\{(y,s):\,s<T_\lambda(y)\}$ and $d_\lambda(y,s)$ be the Euclidean distance to $\{s=T_\lambda(y)\}$. Under \textup{A\ref{A1}}–\textup{A\ref{A5}}, there exist constants $C_1, C_2>0$ and for $\alpha=0,1,2,3$, constants $C_{3,\alpha},C_{4,\alpha}>0$ (depending on $p,\mu,\varepsilon_1$) such that, for all $(y,s)\in\Omega_\lambda$,
\begin{align}
&C_1(\phi_\lambda+\psi_\lambda)^p \le \partial_s\phi_\lambda \le C_2(\phi_\lambda+\psi_\lambda)^p, \label{4.14}\\
&C_1(\phi_\lambda+\psi_\lambda)^p \le \partial_s\psi_\lambda \le C_2(\phi_\lambda+\psi_\lambda)^p, \\
&C_1\bigl(T_\lambda(y)-s\bigr)^{-q} \le \phi_\lambda(y,s) \le C_2\bigl(T_\lambda(y)-s\bigr)^{-q}, \\
&C_1\bigl(T_\lambda(y)-s\bigr)^{-q} \le \psi_\lambda(y,s) \le C_2\bigl(T_\lambda(y)-s\bigr)^{-q}, \\
&|\partial_y\phi_\lambda| \le \frac{1}{1+\varepsilon_1}\,\partial_s\phi_\lambda,\quad
 |\partial_y\psi_\lambda| \le \frac{1}{1+\varepsilon_1}\,\partial_s\psi_\lambda, \\
&|T_\lambda(y)-T_\lambda(y')| \le \frac{1}{1+\varepsilon_1}\,|y-y'|, \label{20} \\
&\frac{T_\lambda(y)-s}{\sqrt{2}} \le d_\lambda(y,s) \le T_\lambda(y)-s, \\
&\max\{|D_\theta^{\alpha}\phi_\lambda|, |D_\theta^{\alpha}\psi_\lambda|\}
   \le C_{3,\alpha} (\phi_\lambda+\psi_\lambda)^{p+(\alpha-1)/q}
   \le C_{4,\alpha}\,d_\lambda^{-(pq+\alpha-1)}, \label{5.23}
\end{align}
where $D_\theta=\sin\theta\,\partial_s+\cos\theta\,\partial_y$, $\phi_\lambda(y,s)$ and $\psi_\lambda(y,s)$ are defined in \eqref{eq:scaled-phi} and \eqref{eq:scaled-psi}, $T_\lambda(y)=\big(T(x_0+\lambda y)-T(x_0)\big)/\lambda$  and all the above constants are independent of $\lambda$.
\end{lemma}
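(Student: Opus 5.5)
The estimates in Lemma~\ref{lem:scaled-estimates} follow from the unscaled ones by a change of variables. The point to check throughout is that every inequality already proved on $\Omega$ is homogeneous under the scaling $(x,t)\mapsto(x_0+\lambda y,\,T(x_0)+\lambda s)$ with amplitude factor $\lambda^{q}$. I would first record the chain-rule identities
\[
\partial_s\phi_\lambda=\lambda^{q+1}\,\partial_t\phi,\qquad \partial_y\phi_\lambda=\lambda^{q+1}\,\partial_x\phi,\qquad D_\theta^{\alpha}\phi_\lambda=\lambda^{q+\alpha}\,D_\theta^{\alpha}\phi,
\]
with the right-hand sides evaluated at $(x_0+\lambda y,T(x_0)+\lambda s)$, and the same identities for $\psi$. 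I would also record $(\phi_\lambda+\psi_\lambda)^{p}=\lambda^{qp}(\phi+\psi)^{p}$, together with the algebraic relation $qp=q+1$, which is the heart of the homogeneity.

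With these in hand the items go in order. First, \eqref{4.14} and its $\psi$-analogue: multiply \eqref{aa} and \eqref{cc} of Theorem~\ref{thm3.3} by $\lambda^{q+1}=\lambda^{qp}$; the constants are unchanged. Second, the pointwise bounds on $\phi_\lambda$ and $\psi_\lambda$ by $(T_\lambda(y)-s)^{-q}$: use Remark~\ref{lem:3.6} and the scaling identity already written in the proof of Lemma~\ref{lem:4.2}, namely
\[
\lambda^{q}\bigl[T(x_0+\lambda y)-T(x_0)-\lambda s\bigr]^{-q}=(T_\lambda(y)-s)^{-q}.
\]
Third, the gradient bound $|\partial_y\phi_\lambda|\le(1+\varepsilon_1)^{-1}\partial_s\phi_\lambda$: it is Lemma~\ref{em} after multiplying by $\lambda^{q+1}$. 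Here I would pass to the limit $n\to\infty$ using the $\mathcal{C}^1$ convergence from Lemma~\ref{lemma 2.3}, and note that the same induction as in Lemma~\ref{em} runs with $\varepsilon_1$ in place of $\varepsilon_0$ thanks to A\ref{A5}. Fourth, the Lipschitz bound \eqref{20}: by Theorem~\ref{thm3.2} (again with $\varepsilon_1$), $|T_\lambda(y)-T_\lambda(y')|=\lambda^{-1}|T(x_0+\lambda y)-T(x_0+\lambda y')|\le(1+\varepsilon_1)^{-1}|y-y'|$. Fifth, the comparison between $d_\lambda$ and $T_\lambda(y)-s$: either rescale Remark~\ref{rem:3.4}, since distances scale by $\lambda$, or prove it directly from \eqref{20}. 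In the direct proof, the vertical segment gives the upper bound, and the cone $\{(y',s'):\ s'-s\ge|y'-y|\}$ from $(y,s)$ with slope $1>(1+\varepsilon_1)^{-1}$ meets $\Gamma_\lambda$ only at points at distance at least $(T_\lambda(y)-s)/\sqrt2$.

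Finally, \eqref{5.23}: multiply Lemma~\ref{lem:4.1} by $\lambda^{q+\alpha}$. The needed exponent identity is
\[
\lambda^{q+\alpha}(\phi+\psi)^{p+(\alpha-1)/q}=(\phi_\lambda+\psi_\lambda)^{p+(\alpha-1)/q},
\]
which holds because $q\bigl(p+(\alpha-1)/q\bigr)=qp+\alpha-1=q+\alpha$. The second inequality in \eqref{5.23} then follows from the first, the bound $(\phi_\lambda+\psi_\lambda)\le C(T_\lambda(y)-s)^{-q}$, and the $d_\lambda$ comparison. This gives $d_\lambda^{-(pq+\alpha-1)}$, since $q\bigl(p+(\alpha-1)/q\bigr)=pq+\alpha-1$.

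The main obstacle is the damping term. The rescaled system \eqref{fk} is not scale invariant: the coefficient really becomes $\lambda\mu/(1+T(x_0)+\lambda s)$. So one cannot re-derive the estimates from the equation for $(\phi_\lambda,\psi_\lambda)$ with $\lambda$-uniform constants by rerunning the comparison arguments. The plan avoids this by transporting only the already proved inequalities, none of which involve $\mu$ explicitly beyond the constants. Their constants depend only on $p,\mu,\varepsilon_0,\varepsilon_1$, and these are fixed before scaling, which gives independence from $\lambda$. The one delicate point to verify is that Lemma~\ref{lem:4.1}, stated on $\Omega$, covers the whole image of $\Omega_\lambda$, which holds by definition of $\Omega_\lambda$.
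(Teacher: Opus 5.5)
Your proposal is correct and is essentially the paper's own proof. Both arguments transport the already established unscaled estimates (Theorem~\ref{thm3.3}, Remark~\ref{lem:3.6}, Lemma~\ref{em}, Theorem~\ref{thm3.2}, the distance comparison, and Lemma~\ref{lem:4.1}) through the scaling $\phi_\lambda=\lambda^{q}\phi(x_0+\lambda y,T(x_0)+\lambda s)$, using $pq=q+1$ and $q\bigl(p+(\alpha-1)/q\bigr)=q+\alpha$, so every constant is inherited and independent of $\lambda$. Your extra remarks are accurate and are points the paper passes over silently: Lemma~\ref{em} and Theorem~\ref{thm3.2} have to be rerun with $\varepsilon_1$ in place of $\varepsilon_0$ via A\ref{A5}, and the rescaled damping coefficient is really $\lambda\mu/(1+T(x_0)+\lambda s)$, so the estimates should not be re-derived from the scaled system.
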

\begin{proof}
Fix $x_0\in B_{R^*}$ and set $q=\frac{1}{p-1}$. 
We use the scalings
$$
\partial_s\phi_\lambda=\lambda^{q+1}\partial_t\phi,\qquad 
\partial_y\phi_\lambda=\lambda^{q+1}\partial_x\phi,
$$
(and similarly for $\psi_\lambda$) and
$$
\phi_\lambda+\psi_\lambda=\lambda^{q}(\phi+\psi).
$$
By Theorem~\ref{thm3.3} and the identity $pq=q+1$, we have
$$
C_1(\phi+\psi)^p\le\partial_t\phi\le C_2(\phi+\psi)^p,
\qquad
C_1(\phi+\psi)^p\le\partial_t\psi\le C_2(\phi+\psi)^p.
$$
Multiplying by $\lambda^{q+1}$ and using the previous scaling yields
$$
C_1(\phi_\lambda+\psi_\lambda)^p\le\partial_s\phi_\lambda\le C_2(\phi_\lambda+\psi_\lambda)^p,
\qquad
C_1(\phi_\lambda+\psi_\lambda)^p\le\partial_s\psi_\lambda\le C_2(\phi_\lambda+\psi_\lambda)^p.
$$
Next, Remark~\ref{lem:3.6} gives
$$
C_1\,(T(x)-t)^{-q}\le \phi(x,t),\psi(x,t)\le C_2\,(T(x)-t)^{-q}.
$$
Since $T(x_0+\lambda y)-\big(T(x_0)+\lambda s\big)=\lambda\big(T_\lambda(y)-s\big)$, multiplying by $\lambda^{q}$ gives
$$
C_1\,(T_\lambda(y)-s)^{-q}\le \phi_\lambda(y,s),\psi_\lambda(y,s)\le C_2\,(T_\lambda(y)-s)^{-q}.
$$
From Lemma~\ref{em} we have $\partial_t\phi\ge(1+\varepsilon_1)|\partial_x\phi|$ and $\partial_t\psi\ge(1+\varepsilon_1)|\partial_x\psi|$; 
under the scaling this becomes
$$
|\partial_y\phi_\lambda|\le\frac{1}{1+\varepsilon_1}\,\partial_s\phi_\lambda,
\qquad
|\partial_y\psi_\lambda|\le\frac{1}{1+\varepsilon_1}\,\partial_s\psi_\lambda.
$$
The Lipschitz estimate of Theorem~\ref{thm3.2} implies 
$$
|T_\lambda(y)-T_\lambda(y')|\le L\,|y-y'|
\quad\text{with}\quad 
L:=(1+\varepsilon_1)^{-1}<1.
$$
Hence the Euclidean distance $d_\lambda(y,s)$ from $(y,s)$ to the graph $s=T_\lambda(y)$ satisfies the standard geometric bounds
$$
\frac{T_\lambda(y)-s}{\sqrt{1+L^2}}\le d_\lambda(y,s)\le T_\lambda(y)-s,
$$
and since $L<1$ we obtain
$$
\frac{T_\lambda(y)-s}{\sqrt{2}}\le d_\lambda(y,s)\le T_\lambda(y)-s.
$$
Finally, Lemma~\ref{lem:4.1} in original variables yields, for $\alpha=0,1,2,3$,
$$
\max\{|D_\theta^\alpha\phi|,|D_\theta^\alpha\psi|\}\le \lambda_\alpha\,(\phi+\psi)^{p+(\alpha-1)/q}.
$$
Under the scaling $D_\theta$ (a linear combination of $\partial_s$ and $\partial_y$) each derivative contributes a factor $\lambda^{q+1}$, while 
$(\phi+\psi)^{p+(\alpha-1)/q}$ contributes $\lambda^{pq+\alpha-1}=\lambda^{q+\alpha}$, so the inequality is scale-invariant and becomes
$$
\max\{|D_\theta^\alpha\phi_\lambda|,|D_\theta^\alpha\psi_\lambda|\}\le C_{3,\alpha}(\phi_\lambda+\psi_\lambda)^{p+(\alpha-1)/q}.
$$
Using $(\phi_\lambda+\psi_\lambda)\le C\,(T_\lambda-s)^{-q}$ from above and the comparison between $T_\lambda-s$ and $d_\lambda$ gives
$$
\max\{|D_\theta^\alpha\phi_\lambda|,|D_\theta^\alpha\psi_\lambda|\}\le C_{4,\alpha}\,d_\lambda^{-(pq+\alpha-1)}.
$$
All constants depend only on $(p,\mu,\varepsilon_1)$ and are independent of $\lambda$, completing thus the proof.
\end{proof}

\subsection{Linearity of the Blow-up Curve for Limit Solutions}\label{sec6.2}
Our approach to establishing \( T \in \mathcal{C}^1(B_{R^*}) \), which will be the subject of Section \ref{section8},    involves analyzing limits of the scaled functions \( T_{\lambda_n} \), \( \phi_{\lambda_n} \), and \( \psi_{\lambda_n} \).
For that purpose, let us introduce $\{\phi_{\lambda_n}\}$ and $\{\psi_{\lambda_n}\}$ with $\lambda_n \downarrow 0$ which  are called blow-up sequences (see \cite{Caffarelli1986}).

\medskip
Let \(\{I_k\}_{k\ge1}\) be an increasing family of closed intervals covering \(\mathbb{R}\).
By the Lipschitz estimate \eqref{20}, the family \(\{T_\lambda\}_{\lambda>0}\) is equicontinuous on each \(I_k\).
Moreover, there exist \(\lambda_k>0\) and \(M_k>0\) such that
$$
\sup_{0<\lambda<\lambda_k}\ \sup_{y\in I_k}\,|T_\lambda(y)|\ \le\ M_k.
$$
Hence, by Arzelà–Ascoli, for every \(k\) there exists a subsequence \((\lambda^{(k)}_n)_{n\ge 1}\) with
\(T_{\lambda_n^{(k)}}\to T_0^{(k)}\) in \(C(I_k)\).
A diagonal extraction \(\Lambda_n:=\lambda_n^{(n)}\) then yields
$$
T_{\Lambda_n}\ \longrightarrow\ T_0 \qquad\text{locally uniformly on }\mathbb{R}.
$$
We will  show that \(T_0\) is affine, namely \(T_0(y)=\alpha_{x_0}y\), and thus preclude any loss of differentiability for \(T\) at \(x_0\).
If \(T\) were not differentiable at some \(x'\), one could build sequences producing
\begin{equation}\label{eq:4.22}
\limsup_{\lambda_{n'}\to 0}\frac{T_{\lambda_{n'}}(y')}{y'}\;>\;
\liminf_{\lambda_{n''}\to 0}\frac{T_{\lambda_{n''}}(y'')}{y''},
\end{equation}
which would give two distinct affine limits \(T_0^{(1)}(y)=\alpha_{x_0}y\) and \(T_0^{(2)}(y)=\alpha'_{x_0}y\).
This contradiction rules out non-differentiability and ensures the differentiability of \(T\) at \(x_0\).

\medskip
We  examine the limiting behavior of our scaled solutions as \(\lambda \to 0\).
The natural domain for the limiting solutions is
$$
\Omega_0 := \{(y,s) : y \in \mathbb{R},\ s < T_0(y)\},
$$
where \(T_0\) is the uniform limit obtained previously.
\medskip
 From \eqref{20} and \eqref{5.23}, for every compact $K\Subset \Omega_0$ there exists $C=C(K)>0$ such that, for all small $\lambda$ with $K\subset \Omega_\lambda$,
\begin{equation}\tag{0.4}
C^{-1}\bigl(T_\lambda(y)-s\bigr)^{-q}\le \phi_\lambda,\ \psi_\lambda \le
C\,\bigl(T_\lambda(y)-s\bigr)^{-q}\quad \text{on }K,
\end{equation}
\begin{equation}\tag{0.5}
\max_{\theta\in[0,2\pi)}\ \max_{\alpha=0,1,2,3}
\Bigl(\,\bigl|D_\theta^{\alpha}\phi_\lambda\bigr|+\bigl|D_\theta^{\alpha}\psi_\lambda\bigr|\,\Bigr)\le C
\quad \text{on }K.
\end{equation}
Hence $\{\phi_\lambda\}$, $\{\psi_\lambda\}$ and their derivatives up to order $3$ are equibounded and equicontinuous on $K$. Applying Arzelà--Ascoli on an exhausting family of compacts $K\Subset \Omega_0$ and taking a diagonal subsequence of $\{\Lambda_n\}$, we obtain $\tilde\lambda_n\downarrow 0$ and functions $v_\phi, v_\psi$ such that, for every multi-index $\beta$ with $|\beta|\le 3$,
\begin{equation}\tag{0.6}
\partial^\beta \phi_{\tilde\lambda_n}\to \partial^\beta v_\phi,\qquad
\partial^\beta \psi_{\tilde\lambda_n}\to \partial^\beta v_\psi
\quad \text{uniformly on any compact subset of }\Omega_0.
\end{equation}
Equivalently, we have, as $n \to \infty$, 
\begin{equation}\label{eq:4.23}
\begin{cases}
\phi_{\tilde{\lambda}_n} \to v_\phi, \quad \psi_{\tilde{\lambda}_n} \to v_\psi, \\[4pt]
D_\theta\phi_{\tilde{\lambda}_n} \to v_\phi^{1,\theta}, \quad D_\theta\psi_{\tilde{\lambda}_n} \to v_\psi^{1,\theta}, \\[4pt]
D_\theta^2\phi_{\tilde{\lambda}_n} \to v_\phi^{2,\theta}, \quad D_\theta^2\psi_{\tilde{\lambda}_n} \to v_\psi^{2,\theta}, \\[4pt]
D_\theta^3\phi_{\tilde{\lambda}_n} \to v_\phi^{3,\theta}, \quad D_\theta^3\psi_{\tilde{\lambda}_n} \to v_\psi^{3,\theta}.
\end{cases}
\end{equation}
Since $\phi_{\tilde\lambda_n},\ \psi_{\tilde\lambda_n}\in \mathcal{C}^3$ and all derivatives $\partial^\beta$ with $|\beta|\le 3$ converge locally uniformly we have
$$
v_\phi,\ v_\psi \in \mathcal{C}^3(\Omega_0),\qquad
\partial^\beta v_\phi=\lim_{n\to\infty}\partial^\beta \phi_{\tilde\lambda_n},\quad
\partial^\beta v_\psi=\lim_{n\to\infty}\partial^\beta \psi_{\tilde\lambda_n}\quad (|\beta|\le 3).
$$
Thus the convergences hold in $\mathcal{C}^3_{\mathrm{loc}}(\Omega_0)$.
The profiles $v_\phi,\ v_\psi$ are limits of the rescaled functions $\phi_\lambda,\ \psi_\lambda$. Equivalently, along $\tilde\lambda_n\downarrow 0$,
\begin{equation}\tag{0.7}
\phi\bigl(x_0+\tilde\lambda_n y,\ T(x_0)+\tilde\lambda_n s\bigr)
=\tilde\lambda_n^{-q}\,v_\phi(y,s)+o\bigl(\tilde\lambda_n^{-q}\bigr),
\end{equation}
uniformly on compact subsets of $\Omega_0$; the same holds for $\psi$.
By  \eqref{fk} and \eqref{5.23} we get the following limiting system:
\begin{equation}\label{eq:4.24}
\begin{aligned}
D_-v_\phi &= 2^{-p}(v_\phi + v_\psi)^p, \\[4pt]
D_+v_\psi &= 2^{-p}(v_\phi + v_\psi)^p.
\end{aligned}
\end{equation}
The following estimates hold for \((y,s) \in \Omega_0\) with constants \(C_1, C_2, C_{3,\alpha}, C_{4,\alpha}\) depending only on \(p,\mu\) and \(\varepsilon_1\) and the fixed parameters of the construction:
\begin{align}
C_1(v_\phi + v_\psi)^p &\leq \partial_s v_\phi \leq C_2(v_\phi + v_\psi)^p, \label{eq:4.25} \\
C_1(v_\phi + v_\psi)^p &\leq \partial_s v_\psi \leq C_2(v_\phi + v_\psi)^p, \label{eq:4.26} \\
C_1(T_0(y) - s)^{-q} &\leq v_\phi(y,s) \leq C_2(T_0(y) - s)^{-q}, \label{eq:4.27} \\
C_1(T_0(y) - s)^{-q} &\leq v_\psi(y,s) \leq C_2(T_0(y) - s)^{-q}, \label{eq:4.28} \\
|\partial_y v_\phi| &\leq \frac{1}{1+\varepsilon_1}\,\partial_s v_\phi, \quad
|\partial_y v_\psi| \leq \frac{1}{1+\varepsilon_1}\,\partial_s v_\psi, \label{eq:4.29} \\
|T_0(y) - T_0(y_0)| &\leq \frac{|y - y_0|}{1 + \varepsilon_1}, \label{eq:4.30} \\
\frac{T_0(y) - s}{\sqrt{2}} &\leq d_0(y,s) \leq T_0(y) - s, \label{eq:4.31}
\end{align}
where \(d_0(y,s)\) is the distance to the limiting blow-up surface \(\Gamma_0 = \{(y,s) : s = T_0(y)\}\).
For derivatives up to third order (\(\alpha = 0,1,2,3\)):
\begin{equation}\label{eq:4.32}
\begin{aligned}
\max\{|D_\theta^\alpha v_\phi(y,s)|, |D_\theta^\alpha v_\psi(y,s)|\} 
&\leq C_{3,\alpha}\,(v_\phi + v_\psi)^{p+(\alpha-1)/q} \\
&\leq C_{4,\alpha}\,d_0(y,s)^{-(pq+\alpha-1)}.
\end{aligned}
\end{equation}

Now, we consider the limiting system
\begin{equation}\label{eq:5.1}
\left\{
\begin{aligned}
D_-V_\phi &= 2^{-p}(V_\phi + V_\psi)^p, \\
D_+V_\psi &= 2^{-p}(V_\phi + V_\psi)^p,
\end{aligned}
\right.
\end{equation}
with a straight blow-up curve of the form
\begin{equation}\label{eq:5.2}
\Gamma_\alpha := \{(y,s) : s = \alpha y, \ -\infty < y < \infty\},
\end{equation}
where \(\alpha \in \mathbb{R}\) is a constant.
\begin{lemma}
The system \eqref{eq:5.1} admits an explicit similarity solution
\begin{equation}\label{eq:5.3}
\left\{
\begin{aligned}
V_{\phi,\alpha}(y,s) &= C_{\phi,\alpha}\,(\alpha y - s)^{-q}, \\
V_{\psi,\alpha}(y,s) &= C_{\psi,\alpha}\,(\alpha y - s)^{-q},
\end{aligned}
\right.
\end{equation}
with constants given by:
$$
C_{\phi,\alpha}=\frac{1-\alpha}{2}\,A,\qquad
C_{\psi,\alpha}=\frac{1+\alpha}{2}\,A,\qquad
A^{\,p-1}=2^{\,p-1}\,q\,(1-\alpha^2).
$$
\end{lemma}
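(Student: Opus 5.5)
The plan is to verify the claim by direct substitution: posit the ansatz \eqref{eq:5.3} and reduce \eqref{eq:5.1} to algebraic conditions on $C_{\phi,\alpha}$ and $C_{\psi,\alpha}$. Throughout I take $|\alpha|<1$, which is the regime relevant for Theorem~\ref{th:3.6} (compare \eqref{eq:4.30}). I work on the region $\{s<\alpha y\}$, which lies below $\Gamma_\alpha$. There the quantity $z:=\alpha y-s$ is positive, so $z^{-q}$ is smooth and blows up as $(y,s)$ approaches $\Gamma_\alpha$.

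\textbf{Step 1: reduce to algebra.} Since $D_\mp=\partial_s\mp\partial_y$, we have $D_- z=-(1+\alpha)$ and $D_+ z=-(1-\alpha)$. Hence
\begin{equation*}
D_-V_{\phi,\alpha}=q(1+\alpha)C_{\phi,\alpha}\,z^{-q-1},\qquad D_+V_{\psi,\alpha}=q(1-\alpha)C_{\psi,\alpha}\,z^{-q-1}.
\end{equation*}
On the right-hand side, $(V_{\phi,\alpha}+V_{\psi,\alpha})^p=(C_{\phi,\alpha}+C_{\psi,\alpha})^p z^{-pq}$. The identity $pq=q+1$ makes the powers of $z$ on both sides match, so the ansatz is self-similar exactly as intended. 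Writing $A:=C_{\phi,\alpha}+C_{\psi,\alpha}$, the system \eqref{eq:5.1} becomes
\begin{equation*}
q(1+\alpha)C_{\phi,\alpha}=2^{-p}A^p,\qquad q(1-\alpha)C_{\psi,\alpha}=2^{-p}A^p.
\end{equation*}

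\textbf{Step 2: solve the algebraic system.} From the two equations, $C_{\phi,\alpha}:C_{\psi,\alpha}=(1-\alpha):(1+\alpha)$. Combined with $C_{\phi,\alpha}+C_{\psi,\alpha}=A$, this forces $C_{\phi,\alpha}=\frac{1-\alpha}{2}A$ and $C_{\psi,\alpha}=\frac{1+\alpha}{2}A$. Substituting back into either equation gives $q(1-\alpha^2)\frac{A}{2}=2^{-p}A^p$. For $A>0$ this is equivalent to $A^{p-1}=2^{p-1}q(1-\alpha^2)$, and since $|\alpha|<1$ there is a unique positive root $A$. As a consistency check, the constants found are exactly the ones stated, and both are positive. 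Positivity matters because the positive branch $|\phi+\psi|^p=(\phi+\psi)^p$ was used.

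\textbf{Main point needing care.} The only delicate issues are the sign conventions and the role of the damping. The orientation $z=\alpha y-s>0$ must agree with the blow-up lying on the upper side, $s\nearrow\alpha y$, matching $T_0(y)-s$ in \eqref{eq:4.27}. The damping term is already absent in \eqref{eq:5.1}, since it scales away in the limit (cf.\ \eqref{eq:4.24}), so no $\mu$-dependent terms appear here. Finally, I would note that for $|\alpha|\ge 1$ the relation for $A^{p-1}$ has no positive solution. This explains the restriction $\alpha\in(-1,1)$ in Theorem~\ref{th:3.6}.
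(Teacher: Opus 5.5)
Your proposal is correct and matches the paper's approach. The paper states this lemma without a written proof, leaving it as the direct substitution you carry out: $D_-V_{\phi,\alpha}=q(1+\alpha)C_{\phi,\alpha}(\alpha y-s)^{-q-1}$ and $D_+V_{\psi,\alpha}=q(1-\alpha)C_{\psi,\alpha}(\alpha y-s)^{-q-1}$, together with $pq=q+1$, give exactly the stated constants. Your restriction to $|\alpha|<1$, which ensures a positive $A$ exists and both coefficients are positive, is a worthwhile precision that the statement leaves implicit, since it introduces $\alpha\in\mathbb{R}$ in \eqref{eq:5.2}.
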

Before proving  Theorem~\ref{th:3.6},  we require several preparatory results.
\begin{lemma}\label{lem:5.2}
Under the assumptions A\ref{A1}--A\ref{A5}, the limiting blow-up time $T_0$ is concave.
\end{lemma}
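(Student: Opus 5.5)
The plan is to follow the Caffarelli--Friedman/Sasaki argument: concavity of $T_0$ comes from a uniform one-sided bound on the second derivative of the level curves of $v_\phi+v_\psi$, which then passes to the blow-up curve. Near $\Gamma_0$, the level sets $\{v_\phi+v_\psi=M\}$, $M$ large, are graphs $s=E_M(y)$. This follows from \eqref{eq:4.25}--\eqref{eq:4.26}, which give $\partial_s(v_\phi+v_\psi)>0$, together with \eqref{eq:4.27}--\eqref{eq:4.28}. Exactly as in part (iii) of the proof of Theorem~\ref{thm3.2}, $E_M$ is $\mathcal C^2$ with
\[
E_M'=-\frac{\partial_y w}{\partial_s w},\qquad w:=v_\phi+v_\psi ,
\]
and $|E_M'|\le (1+\varepsilon_1)^{-1}$ by \eqref{eq:4.29}. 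Differentiating once more gives
\[
E_M''=-\frac{w_{yy}+2w_{ys}E_M'+w_{ss}(E_M')^2}{w_s}.
\]
This is $-D_\theta^2 w/w_s$ (up to the factor $1+(E_M')^2$), with $\theta$ the tangent direction of the level curve. Since $E_M\to T_0$ locally uniformly as $M\to\infty$, by \eqref{eq:4.27}--\eqref{eq:4.28} and \eqref{eq:4.31}, it suffices to prove $E_M''\le \delta(M)$ with $\delta(M)\to0$. Concavity then passes to the uniform limit.

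The scaling count shows where this bound must come from. Lemma~\ref{lem:4.1} and \eqref{eq:4.32} give $|D_\theta^2 w|\le C w^{2p-1}$, while $w_s\ge 2C_1w^p$, so the crude estimate is only $|E_M''|\le Cw^{p-1}=CM^{p-1}$. This blows up, so one needs a genuinely one-sided estimate of the form
\[
D_\theta^2(\phi+\psi)\ \ge\ -\,c\,(\phi+\psi)^{2p-1}\cdot(\text{small}),
\]
or better a lower bound of $D_\theta^2$ by a multiple of $\partial_t(\phi+\psi)$ times a vanishing factor. I would prove this for the approximations $\phi_n,\psi_n$ by the comparison/induction used for $M_n$ in Lemma~\ref{lem:4.1}. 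The key structural point is that the nonlinearity is convex:
\[
D_\theta^2\bigl(2^{-p}(\phi+\psi)^p\bigr)=p2^{-p}(\phi+\psi)^{p-1}D_\theta^2(\phi+\psi)+p(p-1)2^{-p}(\phi+\psi)^{p-2}\bigl(D_\theta(\phi+\psi)\bigr)^2 .
\]
The last term is nonnegative, so $D_\mp D_\theta^2$ of $\phi_{n+1},\psi_{n+1}$ is bounded below by a positive multiple of $D_\theta^2(\phi_n+\psi_n)$ plus damping terms. Positivity of a lower bound therefore propagates along characteristics, as in Lemma~\ref{lem:2.1}. Concretely, I would show
\[
D_\theta^2\phi_n,\ D_\theta^2\psi_n\ \ge\ -K\bigl[(\phi_n+\psi_n)^{p}+1\bigr]
\]
with $K$ depending on the data through A\ref{A3} and A\ref{A5}. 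The initial bound comes from $f'',g''$ bounded on the compact ball, and the $\mu$-terms are of lower order and are absorbed using \eqref{condgam}.

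After scaling, the right-hand side becomes $-K\bigl[\lambda^{q+1}\cdot\lambda^{-pq}(\phi_\lambda+\psi_\lambda)^p+\lambda^{q+2}\bigr]$. The power of $\lambda$ here must be checked carefully; the aim is for the one-sided bound to become $D_\theta^2(\phi_\lambda+\psi_\lambda)\ge -o(1)\,w_\lambda^{2p-1}$ on compacts, so that in the limit $D_\theta^2 w\ge0$. If this holds, $E_M''\le0$ for the limit profile, so every $E_M$ is concave and hence $T_0$ is concave.

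I expect the main obstacle to be exactly this one-sided second-derivative estimate. Two difficulties arise: the damping terms $\frac{\mu}{(1+t)^2}$ and $\frac{\mu}{1+t}D_\theta(\phi+\psi)$ spoil the clean sign of Sasaki's computation, and the estimate must have a scaling weight strictly weaker than $(\phi+\psi)^{2p-1}$, since otherwise it survives the blow-up limit and gives nothing. My first attempt would be a barrier of the form $K(\phi_n+\psi_n)^p\ge-D_\theta^2\phi_n$, whose characteristic derivative is comparable to $(\phi+\psi)^{2p-1}$, handled as in \eqref{eq:4.6}--\eqref{eq:4.8}, using A\ref{A5} to absorb the $\mu$-terms. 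If that fails, I would fall back on the weaker statement directly at the level of the limit system \eqref{eq:4.24}, where there is no damping and the convexity argument is exact.
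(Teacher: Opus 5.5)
\paragraph{Verdict.} There is a genuine gap: the one-sided second-derivative estimate, which you yourself flag as the main obstacle, is never proved, and the barrier you propose for it cannot work.

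\paragraph{What is right.} Your overall architecture is sound; it is the Caffarelli--Friedman mechanism. A lower bound on $D_\theta^2\phi$, $D_\theta^2\psi$ with weight weaker than $(\phi+\psi)^{2p-1}$ disappears under the blow-up scaling. This gives $D_\theta^2 w\ge0$, hence $E_M''=-(1+(E_M')^2)D_\theta^2 w/\partial_s w\le 0$, and $T_0=\lim E_M$ is concave. Your $\lambda$-count has a slip: second derivatives carry $\lambda^{q+2}$, so a weight $(\phi+\psi)^p$ becomes $\lambda(\phi_\lambda+\psi_\lambda)^p$, which does vanish on compacts.

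\paragraph{Why the power barrier fails.} Write $S_n=\phi_n+\psi_n$ and $N=2^{-p}S^p-\frac{\mu}{2(1+t)}S$.
\begin{itemize}
\item In $D_-D_\theta^2\phi_{n+1}$ the term $D_\theta^2 S_n$ has coefficient $p2^{-p}S_n^{p-1}-\frac{\mu}{2(1+t)}$. So the hypothesis $D_\theta^2\phi_n,D_\theta^2\psi_n\ge-KS_n^p$ allows a negative contribution as large as $2Kp2^{-p}S_n^{2p-1}$.
\item The convexity term $p(p-1)2^{-p}S_n^{p-2}(D_\theta S_n)^2$ does not help. It vanishes exactly in the directions tangent to the level curves, which are the ones entering $E_M''$.
\item On the other side, $D_-(KS^p)=KpS^{p-1}D_-S$ with $D_-S=2N-2\partial_x\psi$. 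Lemma~\ref{em} only gives $\partial_x\psi\le N/(2+\varepsilon_0)$, so $D_-S\ge\frac{2(1+\varepsilon_0)}{2+\varepsilon_0}N$. This is strictly less than the $2^{1-p}S^p$ needed, and both sides are linear in $K$, so the comparison does not close for any $K$.
\item For a power $S^\beta$ the argument closes only if $\beta\ge p\frac{2+\varepsilon_0}{1+\varepsilon_0}$. That is below the critical exponent $2p-1$ only if $\varepsilon_0>1/(p-1)$, which is not assumed. A\ref{A5} is calibrated for $\beta=2p-1$ and cannot rescue $\beta=p$.
\end{itemize}
Your fallback fails too. The system \eqref{eq:4.24} lives on $\Omega_0$, which extends to $s=-\infty$ and has no initial line from which convexity could propagate. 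The sign information must be imported from $t=0$ before taking the limit.

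\paragraph{The missing idea.} Use as barrier the exact solution of the linearized system, $(\partial_t\phi,\partial_t\psi)$. By Theorem~\ref{thm3.3} it is comparable to $S^p$, so the target estimate is the one you wanted; only the device changes. Set $X_n=D_\theta^2\phi_n+K\partial_t\phi_n$, $Y_n=D_\theta^2\psi_n+K\partial_t\psi_n$ and $a_n=p2^{-p}S_n^{p-1}-\frac{\mu}{2(1+t)}>0$. A direct computation gives
\[
D_-X_{n+1}=a_n(X_n+Y_n)+p(p-1)2^{-p}S_n^{p-2}(D_\theta S_n)^2+\frac{\mu\sin\theta}{(1+t)^2}D_\theta S_n-\frac{\mu\sin^2\theta}{(1+t)^3}S_n+\frac{K\mu}{2(1+t)^2}S_n ,
\]
and the same right-hand side for $D_+Y_{n+1}$.
\begin{itemize}
\item Young's inequality absorbs the $D_\theta S_n$ term into the square term. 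The error is at most $\frac{2^{p-2}\mu^2}{p(p-1)}\frac{S_n}{(1+t)^2}$, using $S_n\ge1$ from \eqref{condgam}.
\item All remaining negative terms are then dominated by $\frac{K\mu}{2(1+t)^2}S_n$ once $K\ge2+\frac{2^{p-1}\mu}{p(p-1)}$.
\item At $t=0$, for $n\ge1$, A\ref{A4} gives $\partial_t\phi_n(x,0)\ge\frac{1+\varepsilon_0}{2+\varepsilon_0}\bigl(2^{-p}(\gamma_1+\gamma_2)^p-\frac{\mu}{2}(\gamma_1+\gamma_2)\bigr)>0$. Meanwhile $D_\theta^2\phi_n(x,0)$ is bounded uniformly in $n$ by A\ref{A3}.
\end{itemize}
For $K$ large, induction along characteristics gives $X_n,Y_n\ge0$. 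Hence $D_\theta^2\phi\ge-K\partial_t\phi$ and $D_\theta^2\psi\ge-K\partial_t\psi$. After scaling, $D_\theta^2\phi_\lambda\ge-K\lambda\,\partial_s\phi_\lambda\to0$ on compacts, and the rest of your argument goes through.

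\paragraph{Comparison with the paper.} The paper avoids second derivatives entirely. It uses only $\partial_u w,\partial_v w\ge0$ and $\partial_{uv}w\ge0$ in null coordinates, integrated over characteristic rectangles. That controls segments between causally related points, where monotonicity alone already suffices. For spacelike-separated points the rectangle inequality points the wrong way, and those are exactly the pairs that concavity of $T_0$ concerns. So the directional convexity you are aiming for is the input actually needed.
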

\begin{proof}
Set $w:=v_\phi+v_\psi$. By \eqref{eq:4.25}--\eqref{eq:4.26} we get $\partial_s w>0$ and by \eqref{eq:4.29} we have 
$$
|\partial_y w|\le \frac{1}{1+\varepsilon_1}\,\partial_s w.
$$
In the coordinates
$$
u:=s+y,\quad v:=s-y,\qquad
\partial_u=\tfrac12(\partial_s+\partial_y),\ \partial_v=\tfrac12(\partial_s-\partial_y).
$$
This implies
\begin{equation}\label{100}
\partial_u w\ge0,\qquad \partial_v w\ge0.
\end{equation}
Next, starting from
$$
D_-v_\phi=2^{-p}w^p,\qquad D_+v_\psi=2^{-p}w^p,
$$
 computation gives
$$
(\partial_s^2-\partial_y^2)w
=2^{1-p}p\,w^{p-1}\,\partial_s w\ \ge\ 0.
$$
Hence,
\begin{equation}\label{200}
\partial_{uv}w=\tfrac14(\partial_s^2-\partial_y^2)w\ \ge\ 0.
\end{equation}
Let $M>0$ and define $\mathcal L_M:=\{(y,s): w(y,s)\le M\}$. Then, let $R=[u_1,u_2]\times[v_1,v_2]$ be any rectangle in the $(u,v)–$plane. Integrating \eqref{200} over $R$ yields
\begin{equation}\label{300}
w(u_2,v_2)+w(u_1,v_1)\ \ge\ w(u_2,v_1)+w(u_1,v_2).
\end{equation}
If the two opposite corners $(u_1,v_1)$ and $(u_2,v_2)$ lie in $\mathcal L_M$, then by \eqref{300} the other two corners also satisfy $\le M$, and by the monotonicities in \eqref{100} the whole rectangle $R$ lies in $\mathcal L_M$.\\ In particular, for any two points of $\mathcal L_M$, the segment between them stays in $\mathcal L_M$. Hence $\mathcal L_M$ is convex.\\
Finally, since $w(y,s)\to\infty$ as $s\nearrow T_0(y)$, we have
$$
\Omega_0=\{(y,s): s<T_0(y)\}=\bigcup_{M>0}\mathcal L_M,
$$
an increasing union of convex sets. Therefore, $\Omega_0$ is convex.\\ Because $\Omega_0$ is exactly the set $\{s<T_0(y)\}$, this gives that $T_0$ is concave.
\end{proof}
Now, we define the rescaled functions
$$
v_{\varphi,\lambda}(y,s) \;=\; \lambda^{q}\,v_{\varphi}(\lambda y,\lambda s), 
\qquad
v_{\psi,\lambda}(y,s) \;=\; \lambda^{q}\,v_{\psi}(\lambda y,\lambda s),
$$
with \(q=\frac{1}{p-1}\), for \(\lambda \to \infty\).
A direct calculation shows that the corresponding blow-up curve for these rescaled functions is given by
$$
T_{0,\lambda}(y) \;=\; \frac{T_0(\lambda y)}{\lambda}.
$$

\begin{lemma}\label{lemma:5.3}
Under the assumptions A\ref{A1}--A\ref{A5}, the rescaled blow-up curve converges to a piecewise linear function:
$$
T_{0,\lambda}(y) \;\longrightarrow\; 
\begin{cases}
\alpha\, y & \text{for } y \geq 0, \\[4pt]
\beta\, y  & \text{for } y < 0,
\end{cases}
\quad \text{as } \lambda \to \infty,
$$
where the constants $\alpha$ and $\beta$ satisfy $-1 < \alpha \leq \beta < 1$.
\end{lemma}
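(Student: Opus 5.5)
The plan is to exploit the concavity of $T_0$ (Lemma \ref{lem:5.2}) together with the normalization $T_0(0)=0$ and the uniform Lipschitz bound \eqref{eq:4.30}. First, $T_0(0)=0$: since $T_\lambda(0)=0$ for every $\lambda$ by Lemma \ref{lem:4.2}, this passes to the locally uniform limit. Next, for a concave function vanishing at the origin, the difference quotient $y\mapsto T_0(y)/y$ is nonincreasing on $(0,\infty)$ and on $(-\infty,0)$, and every quotient on the left dominates every quotient on the right. By \eqref{eq:4.30} all these quotients lie in $[-\tfrac{1}{1+\varepsilon_1},\tfrac{1}{1+\varepsilon_1}]$. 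Hence the one-sided limits
$$
\alpha:=\lim_{y\to+\infty}\frac{T_0(y)}{y},\qquad \beta:=\lim_{y\to-\infty}\frac{T_0(y)}{y}
$$
exist as monotone bounded limits. Moreover $\alpha\le \beta$, since $T_0(y)/y\le T_0(y')/y'$ for $y>0>y'$ by concavity. Finally $|\alpha|,|\beta|\le (1+\varepsilon_1)^{-1}<1$, which gives $-1<\alpha\le\beta<1$.

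The convergence statement is then a direct computation. For $y>0$ we have $T_{0,\lambda}(y)=y\cdot \frac{T_0(\lambda y)}{\lambda y}\to \alpha y$ as $\lambda\to\infty$, because $\lambda y\to+\infty$. Symmetrically, for $y<0$ the limit is $\beta y$, and $T_{0,\lambda}(0)=0$. To upgrade to locally uniform convergence, I will use that the $T_{0,\lambda}$ are concave and uniformly $(1+\varepsilon_1)^{-1}$-Lipschitz, since the Lipschitz bound is scale invariant. Pointwise convergence of an equi-Lipschitz family then implies uniform convergence on compacts. Alternatively, a Dini-type argument applies, because the quotient is monotone in $\lambda$ for fixed $y$.

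I also need to check that the rescaled pairs $(v_{\phi,\lambda},v_{\psi,\lambda})$ genuinely have blow-up curve $T_{0,\lambda}(y)=T_0(\lambda y)/\lambda$. This follows exactly as in Lemma \ref{lem:4.2}, using \eqref{eq:4.27}–\eqref{eq:4.28} and the scaling identity $\lambda^q(T_0(\lambda y)-\lambda s)^{-q}=(T_0(\lambda y)/\lambda-s)^{-q}$. The limiting system \eqref{eq:4.24} is invariant under this scaling, so all the estimates \eqref{eq:4.25}–\eqref{eq:4.32} persist with the same constants. This invariance is what allows later extraction of limit profiles.

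The main obstacle is essentially absent here; the only delicate point is the ordering $\alpha\le\beta$. This depends on the sign convention that $T_0$ is \emph{concave}, so that the domain $\Omega_0$ below the graph is convex, as proved in Lemma \ref{lem:5.2}. I will verify it carefully via the three-slope inequality applied at the points $y'<0<y$.
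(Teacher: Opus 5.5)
Your proposal is correct and follows essentially the same route as the paper. Concavity of $T_0$ (Lemma~\ref{lem:5.2}) together with $T_0(0)=0$ makes $T_0(\lambda y)/(\lambda y)$ monotone in $\lambda$ on each half-line; the Lipschitz bound then gives $|\alpha|,|\beta|\le (1+\varepsilon_1)^{-1}$, and the chord-slope ordering of concave functions gives $\alpha\le\beta$. Your derivation of $T_0(0)=0$ from $T_\lambda(0)=0$, and the upgrade from pointwise to locally uniform convergence via the scale-invariant Lipschitz bound, are small refinements that the paper leaves implicit.
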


\begin{proof}
First, \(T_{0,\lambda}(0)=T_0(0)/\lambda=0\).
Fix \(y>0\) and define \(\phi(r):=T_0(r y)\) for \(r>0\).
By Lemma~\ref{lem:5.2}, \(T_0\) is concave; hence \(\phi\) is concave on \((0,\infty)\) with \(\phi(0)=0\).
Therefore the ratio \(r\mapsto \phi(r)/r\) is nonincreasing on \((0,\infty)\).\\
Let \(\{\lambda_n\}_{n\ge1}\) be any monotone increasing sequence with \(\lambda_n\to\infty\).
Then, we have
$$
\frac{T_{0,\lambda_n}(y)}{y}
=\frac{T_0(\lambda_n y)}{\lambda_n y}
=\frac{\phi(\lambda_n)}{\lambda_n}.
$$
is a nonincreasing sequence in \(n\).
Hence, the limit
$$
\alpha:=\lim_{n\to\infty}\frac{T_{0,\lambda_n}(y)}{y}
=\inf_{n}\frac{T_0(\lambda_n y)}{\lambda_n y}
=\inf_{\lambda>0}\frac{T_0(\lambda y)}{\lambda y},
$$
exists and is independent of both \(y>0\) and the choice of the increasing sequence \(\{\lambda_n\}\).
Consequently, we obtain
$$
T_{0,\lambda_n}(y)=\frac{T_0(\lambda_n y)}{\lambda_n}\ \longrightarrow\ \alpha\,y
\qquad\text{as }n\to\infty,\quad (y>0).
$$
For \(y<0\), write \(y=-|y|\) and define \(\psi(r):=T_0(-r|y|)\) for \(r>0\).
Again by concavity, \(r\mapsto \psi(r)/r\) is  nondecreasing on \((0,\infty)\).
With the same increasing sequence \(\{\lambda_n\}\),
$$
\frac{T_{0,\lambda_n}(y)}{y}
=\frac{T_0(-\lambda_n |y|)}{-\lambda_n |y|}
=\frac{\psi(\lambda_n)}{\lambda_n},
$$
is a nondecreasing sequence, so the limit
$$
\beta:=\lim_{n\to\infty}\frac{T_{0,\lambda_n}(y)}{y}
=\sup_{n}\frac{T_0(\lambda_n y)}{\lambda_n y}
=\sup_{\lambda>0}\frac{T_0(\lambda y)}{\lambda y},
$$
exists and is independent of both \(y<0\) and \(\{\lambda_n\}_n\).
Thus, we infer that
$$
T_{0,\lambda_n}(y)\ \longrightarrow\ \beta\,y
\qquad\text{as }n\to\infty,\quad (y<0).
$$
Finally, the Lipschitz bound \eqref{20} yields
\(
\bigl|\tfrac{T_{0,\lambda_n}(y)}{y}\bigr|\le \tfrac{1}{1+\varepsilon_1}
\)
for every \(n\).
Passing to the limit gives
\(|\alpha|,|\beta|\le \tfrac{1}{1+\varepsilon_1}<1\).
Therefore, we get
$$
T_{0,\lambda_n}(y)\to \alpha y \quad (y\ge0), 
\qquad
T_{0,\lambda_n}(y)\to \beta y \quad (y<0),
$$
with \(-1<\alpha\le \beta<1\).
\end{proof}
Now, We Consider the rescaled functions $v_{\phi,\lambda}$ and $v_{\psi,\lambda}$ in the domain
$$
\Omega_{0,\lambda} = \{(y,s) \in \mathbb{R}^2 : (\lambda y, \lambda s) \in \Omega_0\}.
$$
We Define the piecewise linear function and corresponding domain:
\begin{equation}\label{tild}
\tilde{T}_0(y) = 
\begin{cases}
\alpha y & \text{for } y \geq 0, \\
\beta y & \text{for } y < 0,
\end{cases}
\qquad
\tilde{\Omega}_0 = \{(y,s) \in \mathbb{R}^2 : s < \tilde{T}_0(y)\}.
\end{equation}
There exists a sequence $\{\lambda_n\}$ such that the following convergences hold locally uniformly in $\tilde{\Omega}_0$:
$$
v_{\phi,\lambda_n} \to w_\phi \quad \text{and} \quad v_{\psi,\lambda_n} \to w_\psi \quad \text{as } \lambda_n \to \infty,
$$
where $w_\phi, w_\psi \in \mathcal{C}^3(\tilde{\Omega}_0)$. 
\begin{remark}
Following the proof technique of Lemma \ref{lem:5.2}, we deduce that $\tilde{T}_0$ is concave. This implies that $\alpha$ and $\beta$ must satisfy either:
\begin{itemize}
\item $\alpha$ and $\beta$ have the same sign, or
\item $-1 < \alpha \leq 0 \leq \beta < 1$.
\end{itemize}
To establish Lemma \ref{th:3.6}, it suffices to prove:
\begin{equation}\label{7.7}
\alpha = \beta, \quad w_\phi = V_{\phi, \beta}, \quad w_\psi = V_{\psi, \beta}.
\end{equation}
\end{remark}
For simplicity, we assume $0 < \alpha \leq \beta < 1$. We introduce the following notation:
\begin{equation}\label{ob}
    \begin{aligned}
l_\beta &= \{(y,s)\in\mathbb{R}^2:\ y<0,\ s=\beta y\},\\
l_\alpha &= \{(y,s)\in\mathbb{R}^2:\ y\ge 0,\ s=\alpha y\},\\
\Omega_\beta &= \{(y,s)\in\mathbb{R}^2:\ \beta^{-1}y < s < \beta y\},\\
\tilde{\Omega}_\beta &= \{(y,s)\in\mathbb{R}^2:\ s<\beta y\ (y\le 0),\ \ s<-y\ (y>0)\},\\
\tilde{\Omega}_\alpha &= \{(y,s)\in\mathbb{R}^2:\ s<y\ (y\le 0),\ \ s<\alpha y\ (y>0)\}.
 \end{aligned}
\end{equation}
Additionally, define the directional derivatives aligned with the line $s=\beta y$:
$$
\partial_r = \partial_s + \beta \partial_y \quad \, 
\qquad 
\partial_\sigma = \beta \partial_s + \partial_y
$$
\begin{lemma}\label{lem5.5}
Assume A\ref{A1}--A\ref{A5}. Then there exists a sequence $\{\lambda_n\}$ such that 
$$
w_{\phi, \lambda_n} \to V_{\phi, \beta} \quad \text{and} \quad w_{\psi, \lambda_n} \to V_{\psi, \beta} \quad \text{as } \lambda_n \to \infty,
$$
locally uniformly in $\Omega'_0$, where 
$$
\Omega'_0 = \{(y, s) : s < \beta y\},
$$
$$
w_{\phi, \lambda_n}(y, s) = w_\phi(y - \lambda_n,\, s - \beta \lambda_n), \quad \text{and} \quad
w_{\psi, \lambda_n}(y, s) = w_\psi(y - \lambda_n,\, s - \beta \lambda_n).
$$
\end{lemma}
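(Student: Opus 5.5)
We shift along the ray $s=\beta y$ with $y\to-\infty$. There the boundary $\tilde T_0$ of $\tilde\Omega_0$ coincides with the straight line $s=\beta y$. We extract a limit, show it is a solution of \eqref{eq:5.1} with straight blow-up curve $s=\beta y$, and then identify it with the similarity solution $(V_{\phi,\beta},V_{\psi,\beta})$.

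\textbf{Step 1 (compactness).} For $(y,s)$ in a compact subset $K$ of $\Omega'_0$, the point $(y-\lambda_n,s-\beta\lambda_n)$ lies in $\tilde\Omega_0$ once $\lambda_n>y$. Its distance to the graph of $\tilde T_0$ equals the distance of $(y,s)$ to the line $s=\beta y$, because for large $\lambda_n$ the nearby part of $\tilde T_0$ is the half-line $l_\beta$. The functions $w_\phi,w_\psi$ inherit \eqref{eq:4.25}--\eqref{eq:4.32}, with $T_0$ replaced by $\tilde T_0$, by passing to the limit in the rescalings $v_{\phi,\lambda}$; these estimates are scale invariant, so the constants are unchanged. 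Hence the translates $w_{\phi,\lambda_n},w_{\psi,\lambda_n}$ are bounded above and below on $K$ by $C(\beta y-s)^{\mp q}$ up to fixed constants, and their derivatives up to order three are bounded on $K$. Arzel\`a--Ascoli with a diagonal extraction then gives a subsequence converging in $\mathcal C^2_{\rm loc}(\Omega'_0)$ to some $(W_\phi,W_\psi)$. This limit solves \eqref{eq:5.1}, because translation commutes with $D_\pm$. It also satisfies the two-sided bounds with $(\beta y-s)^{-q}$, so it blows up exactly on $s=\beta y$.

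\textbf{Step 2 (invariance along the line).} We show that $W$ depends only on $\beta y-s$, i.e.\ $\partial_\sigma$-type translation invariance along the direction $(1,\beta)$. The route is monotonicity along the ray. By \eqref{eq:4.25} and \eqref{eq:4.29}, $w$ is nondecreasing in every direction of a cone containing $(1,\beta)$ in its interior. In that case $w_\phi(y-\lambda,s-\beta\lambda)$ would be monotone in $\lambda$, and the limit would exist along the full family $\lambda\to\infty$, not just a subsequence. Translation by any $h$ along the line then leaves the limit unchanged, so $W(y+h,s+\beta h)=W(y,s)$.

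If $(1,\beta)$ is not inside the monotonicity cone, because $\beta$ exceeds $1/(1+\varepsilon_1)$, we use the quantity $\partial_r w$ instead. Its sign is controlled by the concavity structure $\partial_{uv}w\ge0$ from Lemma~\ref{lem:5.2}. Alternatively we use a sup/inf argument on $\Phi(\lambda)=\sup_K w_{\phi,\lambda}$, and conclude via an integrated identity that $\partial_r W\equiv 0$. I expect this step to be the main obstacle.

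\textbf{Step 3 (identification).} Once $W_\phi=F(\beta y-s)$ and $W_\psi=G(\beta y-s)$, system \eqref{eq:5.1} becomes the ODE pair
\[
-(1+\beta)F'=2^{-p}(F+G)^p,\qquad -(1-\beta)G'=2^{-p}(F+G)^p .
\]
Multiplying the first by $1-\beta$, the second by $1+\beta$, and subtracting gives $(1-\beta)F-(1+\beta)G=\mathrm{const}$, and the bounds $F,G\to0$ at infinity force this constant to be $0$. The resulting scalar Bernoulli equation for $F+G$, with blow-up at $0$, has the unique solution $(F+G)(z)=Az^{-q}$. This yields $W_\phi=V_{\phi,\beta}$ and $W_\psi=V_{\psi,\beta}$. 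The limit is therefore independent of the subsequence, and the convergence holds along the extracted $\{\lambda_n\}$ as claimed.
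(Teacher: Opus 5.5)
Your outline is the same as the paper's: compactness of the translates, monotonicity of $\lambda\mapsto w_\phi(y-\lambda,s-\beta\lambda)$ giving invariance of the limit along $(1,\beta)$, then reduction to an ODE in $\beta y-s$. Steps 1 and 3 are essentially fine, apart from a small slip noted below. The gap is Step 2, which carries the whole content of the lemma.

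From \eqref{eq:4.25} and \eqref{eq:4.29} you only get $e_y\partial_y w+e_s\partial_s w\ge\bigl(e_s-|e_y|/(1+\varepsilon_1)\bigr)\partial_s w$. So $w$ is monotone in the direction $(e_y,e_s)$ only when $e_s\ge |e_y|/(1+\varepsilon_1)$. For $(1,\beta)$ this requires $\beta\ge 1/(1+\varepsilon_1)$, but Lemma~\ref{lemma:5.3} (equivalently \eqref{eq:4.30}) gives $|\beta|\le 1/(1+\varepsilon_1)$. Hence the case you treat as the main one never occurs, and your stated condition for the bad case is reversed. This is not an accident. The derivative of $w$ along a level curve vanishes, so tangents to level curves, and in the limit to $\Gamma_0$, never lie in the open cone where \eqref{eq:4.25} and \eqref{eq:4.29} force strict monotonicity. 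No pointwise gradient bound can yield monotonicity tangentially to the blow-up line.

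What you need is the tangential monotonicity $\partial_\sigma w_\phi,\partial_\sigma w_\psi\ge 0$, with $\partial_\sigma=\beta\partial_s+\partial_y$. This is exactly what the paper's proof invokes at this point, and it then uses it via monotone translates just as you do. You cannot import it from Lemma~\ref{c7.5}, because that lemma's proof relies on the present one. The inequality has to come from the corner geometry of $\tilde T_0$ together with a comparison argument. Every slope of $\tilde T_0$ is at most $\beta$, so for $h>0$ the translate $w(y+h,s+\beta h)$ blows up on $s=\tilde T_0(y+h)-\beta h\le \tilde T_0(y)$. Your fallback ideas (a sign for $\partial_r w$, $\partial_{uv}w\ge 0$, a sup/inf of $\Phi(\lambda)$) do not supply such an argument. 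Moreover, $\partial_r W\equiv 0$ with $\partial_r=\partial_s+\beta\partial_y$ is the wrong target, since $(\beta,1)$ is transversal to $s=\beta y$.

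The smaller slip is in Step 3. Subtracting the two ODEs directly gives $(1+\beta)F'=(1-\beta)G'$, hence $(1+\beta)F-(1-\beta)G=0$ and $F:G=(1-\beta):(1+\beta)$, which matches $C_{\phi,\beta},C_{\psi,\beta}$. Your weighted combination does not cancel the nonlinear terms unless $\beta=0$, and the relation $(1-\beta)F=(1+\beta)G$ would give the swapped ratio.
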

\begin{proof}
The translation $(y,s)\mapsto(y-\lambda, s-\beta\lambda)$ moves points along the tangent to $s=\beta y$.
Given any compact $K\subset\Omega'_0$, for all $\lambda$ large enough we have 
$K-(\lambda,\beta\lambda)\subset\tilde{\Omega}_0$,
so $w_{\phi,\lambda}, w_{\psi,\lambda}$ are well-defined on $K$.\\
Using the uniform bounds \eqref{eq:4.25}–\eqref{eq:4.32} for $w_{\phi},w_{\psi}$ and the fact that translations preserve these bounds, Arzelà–Ascoli provides a subsequence (still denoted $\lambda_n$) and limit functions $W_\phi,W_\psi\in \mathcal{C}^3(\Omega'_0)$ such that, on each compact $K\subset\Omega'_0$ and for $j=0,1,2,3$,
$$
D_r^j w_{\phi,\lambda_n} \to D_r^j W_\phi, 
\qquad 
D_\sigma^j w_{\phi,\lambda_n} \to D_\sigma^j W_\phi,
$$
and likewise for $w_{\psi,\lambda_n}$.\\
The pair $(w_{\phi,\lambda},w_{\psi,\lambda})$ solves the following system:
$$
\begin{cases}
\partial_s w_{\phi,\lambda} - \beta \partial_y w_{\phi,\lambda}
= 2^{-p}(w_{\phi,\lambda}+w_{\psi,\lambda})^p \;-\; \dfrac{\mu}{1+s-\beta\lambda}\,\dfrac{w_{\phi,\lambda}+w_{\psi,\lambda}}{2},\\[6pt]
\partial_s w_{\psi,\lambda} + \beta \partial_y w_{\psi,\lambda}
= 2^{-p}(w_{\phi,\lambda}+w_{\psi,\lambda})^p \;-\; \dfrac{\mu}{1+s-\beta\lambda}\,\dfrac{w_{\phi,\lambda}+w_{\psi,\lambda}}{2}.
\end{cases}
$$
Passing to the limit, as $\lambda \to \infty$, yields, in $\Omega'_0$,
\begin{equation}\label{7.9}
\begin{cases}
\partial_s W_\phi - \beta \partial_y W_\phi = 2^{-p}(W_\phi + W_\psi)^p, \\
\partial_s W_\psi + \beta \partial_y W_\psi = 2^{-p}(W_\phi + W_\psi)^p,
\end{cases}
\end{equation}
From \eqref{eq:4.25}--\eqref{eq:4.32} transferred to $w_{\phi,\lambda_n},w_{\psi,\lambda_n}$ and then to $W_\phi,W_\psi$, we get:
\begin{itemize}
\item $W_\phi, W_\psi \to 0$ and $D_r W_\phi, D_r W_\psi \to 0$ as $d_0(y,s)\to\infty$ in $\Omega'_0$;
\item the tangential monotonicity $\partial_\sigma w_\phi,\partial_\sigma w_\psi \ge 0$  is preserved by translation and therefore  $\partial_\sigma W_\phi,\partial_\sigma W_\psi \ge 0$ in $\Omega'_0$.
\end{itemize}
Let any compact $K\subset\Omega'_0$ and  $\lambda_n<\lambda_m$. By tangential monotonicity, we have
$$
w_{\phi,\lambda_n}(y,s) \;\le\; w_{\phi,\lambda_m}(y,s) \quad \text{on }K.
$$
Passing to the limits as $n,m\to\infty$ shows that $\partial_\sigma W_\phi\equiv 0$ on $K$. Since $K$ is arbitrary chosen, we obtain that
\begin{equation}\label{7.10}
\partial_\sigma W_\phi = \partial_\sigma W_\psi = 0 \quad \text{in } \Omega'_0.
\end{equation}
Hence, thanks to \eqref{7.10}, both $W_\phi,W_\psi$ depend only on the normal variable 
$\rho:=\beta y - s$ and solve 
\begin{equation}
    \left\{
    \begin{aligned}
    q(1+\beta) W_\phi(\rho) = 2^{-p}\big(W_\phi(\rho)+W_\psi(\rho)\big)^p, \\
q(1-\beta) W_\psi(\rho) = 2^{-p}\big(W_\phi(\rho)+W_\psi(\rho)\big)^p,
\end{aligned}
\right.
\end{equation}
Therefore, $W_\phi=V_{\phi,\beta}$ and $W_\psi=V_{\psi,\beta}$ in $\Omega'_0$, as claimed.
\end{proof}
Having established the convergence properties in Lemma \ref{lem5.5}, we now derive an important monotonicity result for the directional derivatives.
\begin{lemma}\label{c7.5}
Assume A\ref{A1}--A\ref{A5}. Then, we have
$$
\partial_\sigma w_\phi, \partial_\sigma w_\psi \geq 0 \quad \text{in } \tilde{\Omega}_0,
$$
where  $\tilde{\Omega}_0$ is defined in \eqref{tild}.
\end{lemma}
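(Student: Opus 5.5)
We follow the Caffarelli--Friedman/Sasaki reflection argument, applied to the limit profile $(w_\phi,w_\psi)$ on $\tilde\Omega_0$. The limit system has no damping: the $\mu$-term disappears in the rescaling $\lambda\to\infty$, exactly as in \eqref{7.9}. Hence $(w_\phi,w_\psi)$ solves
\begin{equation*}
D_-w_\phi=2^{-p}(w_\phi+w_\psi)^p,\qquad D_+w_\psi=2^{-p}(w_\phi+w_\psi)^p \quad\text{in }\tilde\Omega_0,
\end{equation*}
and it inherits the bounds \eqref{eq:4.25}--\eqref{eq:4.32}.

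We differentiate the system along $\partial_\sigma=\beta\partial_s+\partial_y$, which has constant coefficients and commutes with $D_\pm$. This gives the linear system
\begin{equation*}
D_-(\partial_\sigma w_\phi)=p2^{-p}(w_\phi+w_\psi)^{p-1}(\partial_\sigma w_\phi+\partial_\sigma w_\psi),\qquad D_+(\partial_\sigma w_\psi)=p2^{-p}(w_\phi+w_\psi)^{p-1}(\partial_\sigma w_\phi+\partial_\sigma w_\psi).
\end{equation*}
The coupling coefficient is positive. By the remark on preservation of nonnegativity (a comparison argument along backward characteristics), nonnegativity of $(\partial_\sigma w_\phi,\partial_\sigma w_\psi)$ therefore propagates forward in $s$ inside any backward characteristic triangle. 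It thus suffices to show that $\partial_\sigma w_\phi,\partial_\sigma w_\psi\ge 0$ at the "bottom", that is, on a far-past region of $\tilde\Omega_0$.

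We intend to prove $\partial_\sigma w\ge0$ using the geometry of $\tilde T_0$, as follows.

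\textbf{Case $y<0$.} Here the blow-up line is $s=\beta y$. Lemma \ref{lem5.5} shows that, translated along this line, $w$ converges to the travelling profile $V_{\cdot,\beta}$, which depends only on $\beta y-s$. Since $\partial_\sigma(\beta y-s)=\beta-\beta=0$, we get $\partial_\sigma V_{\cdot,\beta}=0$. Consequently $\partial_\sigma w$ tends to zero in that region, and does not have a strict sign there.

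\textbf{Comparison through the flat profile.} To get a sign, we compare $w$ with its translate $w^h(y,s):=w(y+h,s+\beta h)$ for $h>0$. Because $\tilde T_0$ is concave with slopes $\alpha\le\beta$, we have $\tilde T_0(y+h)-\beta h\le \tilde T_0(y)$. Hence $w^h$ is defined on $\tilde\Omega_0$ (its domain contains $\tilde\Omega_0$) and blows up no earlier than $w$; heuristically, $w^h\le w$ would give the wrong sign. So we instead set up the comparison with $w^{-h}$. The key point is that on the left region both $w$ and $w^{-h}$ share the same blow-up line $s=\beta y$. On the right region, $w^{-h}$ has blow-up line $s=\alpha y+(\alpha-\beta)h\cdot(\ldots)$, which lies below that of $w$, since $\alpha\le\beta$.

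We then apply the maximum principle of the positive linear system to the differences $w_\phi-w_\phi^{-h}$ and $w_\psi-w_\psi^{-h}$. The data are taken in the far past, $s\to-\infty$, where both functions decay like $d^{-q}$ and approach the same self-similar profiles. We expect to use Lemma \ref{lem5.5}, together with its analogue for the $\alpha$-line, to control these boundary values.

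\textbf{Conclusion.} The comparison yields $w^{-h}\le w$ for every $h>0$. Dividing by $h$ and letting $h\to0$ gives $\partial_\sigma w\ge0$ for both components.

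\textbf{Main obstacle.} The hardest step is justifying the comparison on the unbounded, noncompact domain $\tilde\Omega_0$. The maximum principle needs boundary data in the far past, but there the difference only tends to zero and has no strict sign. We plan to handle this with a perturbation: compare $w$ with $(1-\delta)w^{-h}$, or shift in $s$ by $\delta$. This makes the inequality strict near the boundary, after which we let $\delta\to0$. The decay estimates \eqref{eq:4.27}--\eqref{eq:4.32} should make the far-field contributions vanish uniformly.
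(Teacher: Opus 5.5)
\textbf{There is a genuine gap, at exactly the step you call the main obstacle, and your proposed remedies do not close it.}

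Comparing $w$ with $w^{-h}(y,s)=w(y-h,s-\beta h)$ through the cooperative linear system for $(w_\phi-w^{-h}_\phi,\,w_\psi-w^{-h}_\psi)$ needs the ordering $w\ge w^{-h}$ on the base of every backward characteristic triangle. But $\tilde\Omega_0$ has no bottom.
\begin{itemize}
\item For a triangle of height $L$, every base point lies at distance $\gtrsim L$ from $\partial\tilde\Omega_0$. There \eqref{eq:4.32} only gives $|w-w^{-h}|=O(h\,d^{-q-1})$, with no sign.
\item Lemma~\ref{lem5.5} does not help. It describes $w$ at \emph{bounded} distance from the $\beta$-line, far along it. Even there it only yields $w-w^{-h}\to0$, since both approach the same $\sigma$-invariant profile.
\item The factor $(1-\delta)$ fails. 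Far away, $(1-\delta)w^{-h}$ is indeed below $w$. But $(1-\delta)^p<1-\delta$ gives $D_-\bigl((1-\delta)w^{-h}_\phi\bigr)>2^{-p}\bigl((1-\delta)(w^{-h}_\phi+w^{-h}_\psi)\bigr)^p$. So it is a strict \emph{super}solution, the difference picks up a negative source, and nonnegativity is not propagated.
\item Conversely, $(1+\delta)w^{-h}$ is a subsolution, but $w-(1+\delta)w^{-h}\approx-\delta w^{-h}<0$ far away.
\item A shift in $s$ is again an exact solution, with no sign at infinity.
\end{itemize}
Nor can the decay bounds make the far-field contribution vanish. An argument that uses only smallness of the difference on the base is symmetric in the two solutions. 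It would therefore force the exact solutions $V_{\phi,\beta}(y,s)$ and $V_{\phi,\beta}(y,s-\delta)$ to coincide, even though they differ by only $O(\delta L^{-q-1})$ at distance $L$ from $s=\beta y$. What is missing is an actual sign of $w-w^{-h}$ (equivalently, of $\partial_\sigma w$) at infinity, and that is essentially the statement itself.

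\textbf{Your domain bookkeeping is also inverted.} The inequality $\tilde T_0(y+h)-\beta h\le\tilde T_0(y)$ means the domain of $w^h$ is \emph{contained} in $\tilde\Omega_0$, so $w^h$ blows up no later than $w$. For $y\ge h$, the blow-up line of $w^{-h}$ is $s=\alpha y+(\beta-\alpha)h$, which lies \emph{above} $s=\alpha y$. Your target $w^{-h}\le w$ (equivalently $w\le w^h$) is correct, but not for the reason you give.

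\textbf{The paper's route is different.} It uses no maximum principle. For fixed $(y,s)\in\tilde\Omega_0$ it sets $h_\phi(\lambda)=\partial_\sigma w_\phi(y-\lambda,s-\beta\lambda)$. It gets $h_\phi(\lambda)\to0$ from Lemma~\ref{lem5.5} together with $\partial_\sigma V_{\phi,\beta}=0$. It then uses $\partial_\sigma^2 w_\phi\ge0$ to make $h_\phi$ nonincreasing, whence $h_\phi(0)\ge0$. So the information at infinity is taken along a single ray parallel to the $\beta$-line, and the sign comes from convexity in $\sigma$; your plan has no counterpart to this.

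Be aware, however, that the paper asserts $\partial_\sigma^2 w\ge0$ without proof. Moreover, its proof of Lemma~\ref{lem5.5} already invokes the tangential monotonicity $\partial_\sigma w\ge0$. Relying on Lemma~\ref{lem5.5} for far-field data, as you intend, is therefore circular as the paper stands. Either route needs an independent source for the sign.
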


\begin{proof}
By Lemma \ref{lem5.5}, for every fixed $(y,s)\in\tilde{\Omega}_0$ the functions
$$
w_{\phi,\lambda}(y,s)=w_\phi(y-\lambda,s-\beta\lambda),\qquad
w_{\psi,\lambda}(y,s)=w_\psi(y-\lambda,s-\beta\lambda)
$$
converge (locally uniformly) as $\lambda\to\infty$ to the profiles $V_{\phi,\beta},V_{\psi,\beta}$. Hence,
$$
\partial_\sigma w_{\phi,\lambda}(y,s)=\partial_\sigma w_\phi(y-\lambda,s-\beta\lambda)\;\longrightarrow\;0,
\quad
\partial_\sigma w_{\psi,\lambda}(y,s)\;\longrightarrow\;0,
$$
as $\lambda\to\infty$.\\
Moreover, we have
$$
\partial_\sigma^2 w_\phi\ge 0,\qquad \partial_\sigma^2 w_\psi\ge 0 \quad \text{in }\tilde{\Omega}_0.
$$
Now, let \((y,s)\in\tilde{\Omega}_0\) and define $h_\phi(\lambda):=\partial_\sigma w_\phi(y-\lambda,s-\beta\lambda)$.\\ Then, $h_\phi'(\lambda)=-\partial_\sigma^2 w_\phi(y-\lambda,s-\beta\lambda)\le 0$, so $h_\phi$ is nonincreasing (in $\lambda$) and $\lim_{\lambda\to\infty}h_\phi(\lambda)=0$. Therefore, $h_\phi(0)=\partial_\sigma w_\phi(y,s)\ge 0$. 

The same argument applies to $w_\psi$. This ends the proof of this lemma.
\end{proof}
Building on these monotonicity properties, we can now establish a key comparison inequality between the limiting solutions.
\begin{lemma}\label{lem7.6}
Assume A\ref{A1}--A\ref{A5}. Then, we have
$$
V_{\phi, \beta} + V_{\psi, \beta} \geq w_\phi + w_\psi \quad \text{in } \Omega_\beta,
$$
where  $\Omega_\beta$ is defined in \eqref{ob}.
\end{lemma}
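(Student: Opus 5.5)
The plan is to follow Caffarelli--Friedman and Sasaki: compare $w_\phi+w_\psi$ with the exact self-similar solution along the direction $\partial_\sigma$, using the monotonicity of Lemma~\ref{c7.5} and the limit of Lemma~\ref{lem5.5}.

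\textbf{Step 1 (monotone translation).} Fix $(y,s)\in\Omega_\beta$. For $\lambda\ge 0$, the translated point $(y-\lambda, s-\beta\lambda)$ moves backwards along the direction parallel to $l_\beta$. For $(y,s)$ in the wedge $\Omega_\beta$, this translated point stays inside $\tilde\Omega_0$. Indeed, it stays below $l_\beta$ for $y-\lambda<0$, and below $l_\alpha$ as long as $y-\lambda\ge 0$, because $\alpha\le\beta$ and the starting point lies below $s=\beta y$ and above $s=\beta^{-1}y$.

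Set
\[
h(\lambda):=(w_\phi+w_\psi)(y-\lambda,s-\beta\lambda).
\]
Then $h'(\lambda)=-\partial_\sigma(w_\phi+w_\psi)$, up to the normalising factor between $\partial_\sigma$ and the translation direction $(1,\beta)$. By Lemma~\ref{c7.5}, $h$ is nonincreasing in $\lambda$. Hence
\[
(w_\phi+w_\psi)(y,s)=h(0)\le \lim_{\lambda\to\infty}h(\lambda).
\]
This gives the wrong direction for a direct conclusion. I will therefore use the monotonicity on the \emph{other side}, or rather identify the limit correctly, as follows.

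\textbf{Step 2 (identifying the limit).} By Lemma~\ref{lem5.5}, $h(\lambda_n)\to (V_{\phi,\beta}+V_{\psi,\beta})(y,s)$ along the sequence $\lambda_n$. Since $h$ is monotone, the full limit exists and equals this value.

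Here the direction must be checked carefully. In the paper's convention, $\partial_\sigma=\beta\partial_s+\partial_y$ points from the translated point $(y-\lambda,s-\beta\lambda)$ towards $(y,s)$. So $\partial_\sigma\ge 0$ means $w$ increases toward $(y,s)$, which gives $h(0)\ge h(\lambda)$. I expect the intended argument to be that $h$ is \emph{nondecreasing} toward the boundary. The inequality $V\ge w$ must then come from a comparison, not from monotonicity alone.

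So I would instead argue by a maximum-principle comparison. Define
\[
z:=V_{\phi,\beta}+V_{\psi,\beta}-w_\phi-w_\psi
\]
on $\Omega_\beta$. Both pairs satisfy the limiting system \eqref{eq:5.1}, with the $\beta$-rescaled form as in \eqref{7.9}. Hence $\Phi:=V_{\phi,\beta}-w_\phi$ and $\Psi:=V_{\psi,\beta}-w_\psi$ satisfy the linear system
\[
D_-\Phi=D_+\Psi=c(y,s)(\Phi+\Psi),\qquad c\ge 0,
\]
with $c$ given by the mean value theorem applied to $x\mapsto 2^{-p}x^p$.

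\textbf{Step 3 (boundary data and positivity propagation).} On the part of $\partial\Omega_\beta$ lying on $l_\beta$, we have $V_{\cdot,\beta}=+\infty$, while $w$ stays finite there for $y>0$, since $\tilde T_0(y)=\alpha y<\beta y$. So $\Phi,\Psi\to+\infty$ on that part. On the other boundary line $s=\beta^{-1}y$, I would use the characteristic integral representation. Integrating $D_-\Phi$ and $D_+\Psi$ backwards along characteristics from a point of $\Omega_\beta$ reaches either $l_\beta$, or the far region where both $V$ and $w$ tend to $0$ (by the decay stated in Lemma~\ref{lem5.5} and \eqref{eq:4.27}). Positivity then propagates as in the preservation-of-nonnegativity remark of Section~\ref{sec4}: a first point where $\Phi+\Psi$ becomes negative leads to a contradiction, because $c\ge 0$.

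\textbf{Main obstacle.} The main difficulty is controlling the boundary data on the characteristic side $s=\beta^{-1}y$ and at infinity. There one needs $\Phi,\Psi\ge 0$, or at least $\liminf\ge 0$, which requires a quantitative form of Lemma~\ref{lem5.5}: the convergence of $w$ to $V_\beta$ along translates, combined with the monotonicity of Lemma~\ref{c7.5}. My first attempt would be to show $w\le V_\beta$ on a translated copy of the wedge using the monotone limit from Steps 1 and 2. I would then shift back using invariance of \eqref{eq:5.1} under translations along $l_\beta$.
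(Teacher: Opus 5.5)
Your proposal has a genuine gap: it never produces the upper bound $w_\phi+w_\psi\le V_{\phi,\beta}+V_{\psi,\beta}$, and that upper bound is the whole content of the lemma.

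As you realise in Step~2 (the display in Step~1 has the inequality the wrong way round), Lemmas~\ref{c7.5} and~\ref{lem5.5} give $h(0)\ge\lim_{\lambda\to\infty}h(\lambda)$, i.e.\ $w_\phi+w_\psi\ge V_{\phi,\beta}+V_{\psi,\beta}$. That is the \emph{opposite} inequality, the one the paper uses in Lemma~\ref{lem7.7}. Translating along $l_\beta$ cannot reverse it, so the remedy sketched under ``Main obstacle'' cannot work. The replacement comparison in Steps~2--3 fails at two concrete points.

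(i) The boundary data on $l_\beta$ are misidentified. For $0<\beta<1$ one has $\beta^{-1}y<\beta y$ only when $y<0$, so $\Omega_\beta\subset\{y<0\}$. Its upper edge is $l_\beta$, on which $\tilde T_0(y)=\beta y$. Hence $w_\phi,w_\psi$ blow up on exactly the same line as $V_{\phi,\beta},V_{\psi,\beta}$, and $\Phi,\Psi$ are of the form $\infty-\infty$ there. The finiteness of $w$ on $s=\beta y$ for $y>0$ concerns points outside $\Omega_\beta$.

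(ii) The positivity argument for $D_-\Phi=D_+\Psi=c(\Phi+\Psi)$ with $c\ge0$ transports a sign only forward in $s$, from data in the past. But $l_\beta$ lies in the future of every point of $\Omega_\beta$, so backward characteristics never reach it.
\begin{itemize}
\item The backward $D_+$-characteristics run to $s\to-\infty$ inside $\Omega_\beta$. There $\Phi,\Psi\to0$, which gives no sign: $c$ is of order $(\beta y-s)^{-1}$, not integrable along these rays, so decay at $-\infty$ does not by itself force one.
\item The backward $D_-$-characteristics, which carry $\Phi$, leave $\Omega_\beta$ through the lower edge $s=\beta^{-1}y$, where nothing is known.
\end{itemize}
Nor can you start from the top edge and integrate backward: with $c\ge0$, nonnegativity of $\Phi,\Psi$ is not preserved backward in $s$.

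The paper handles point (i) with a device your proposal lacks. It compares $S=w_\phi+w_\psi$ not with $V_{\phi,\beta}+V_{\psi,\beta}$ but with a shifted barrier $S_\varepsilon$ built from $C_{\cdot,\beta}(\beta y-\varepsilon-s)^{-q}$. For $y<0$, the singular line $s=\beta y-\varepsilon$ of $S_\varepsilon$ lies strictly below $l_\beta$, where $S$ is finite, so $S-S_\varepsilon\to-\infty$ there. The paper then works with the scalar second-order equation for $S$ instead of the first-order system. It argues at a positive interior maximum of $S-S_\varepsilon$ and finally lets $\varepsilon\downarrow0$.

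Be aware that this last step also needs more than a pointwise maximum principle. The operator $\partial_\tau^2-\partial_\zeta^2$ is hyperbolic: at an interior maximum $\partial_\tau^2(S-S_\varepsilon)\le0$ but $-\partial_\zeta^2(S-S_\varepsilon)\ge0$, so the left-hand side has no definite sign. Either way, what is missing from your argument is a comparison principle that takes its inequality from the singular upper edge (after an $\varepsilon$-shift), and your proposal does not supply one.
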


\begin{proof}
Recall that
$$
\partial_s w_\phi - \beta \partial_y w_\phi = 2^{-p}(w_\phi+w_\psi)^p,\qquad
\partial_s w_\psi + \beta \partial_y w_\psi = 2^{-p}(w_\phi+w_\psi)^p.
$$
Denote $S:=w_\phi+w_\psi$. A direct computation gives the identity
\begin{equation}\label{eq:tau-sigma-identity}
(\partial_\tau^2-\partial_\sigma^2)S
= (1-\beta^2)\,(\partial_s^2-\partial_y^2)S
= 2^{-p+1}p\,(1-\beta^2)\,S^{p-1}\,\partial_s S.
\end{equation}
Let $\varepsilon>0$. Now, define the regularized profiles
$$
V_{\phi,\beta}^\varepsilon(y,s)=C_{\phi,\beta}\,(\beta y-\varepsilon-s)^{-q},\qquad
V_{\psi,\beta}^\varepsilon(y,s)=C_{\psi,\beta}\,(\beta y-\varepsilon-s)^{-q},
$$
and set $S_\varepsilon:=V_{\phi,\beta}^\varepsilon+V_{\psi,\beta}^\varepsilon$. Then, $S_\varepsilon$ satisfies, in $\Omega_\beta^\varepsilon:=\{(y,s):\ s<\beta y-\varepsilon\}\subset\Omega_\beta$,
\begin{equation}\label{sub}
(\partial_\tau^2-\partial_\sigma^2)S_\varepsilon(y,s)
\;\le\;
2^{-p+1}p\,(1-\beta^2)\,S_\varepsilon^{\,p-1}(y,s)\,\partial_s S_\varepsilon(y,s).
\end{equation}
On the upper boundary $s=\beta y-\varepsilon$, we have $S_\varepsilon(y,s)\to \infty$ while $S(y,s)$ remains finite, hence
$$
S(y,s)-S_\varepsilon(y,s)\to-\infty\quad\text{on } \{s=\beta y-\varepsilon\}.
$$
As $|\beta y - s|\to\infty$ inside $\Omega_\beta^\varepsilon$, both $S$ and $S_\varepsilon$ vanish, and therefore
$$
\big(S-S_\varepsilon\big)(y,s)\to 0 \quad \text{as} \quad |\beta y - s|\to\infty.
$$
Assume by contradiction that $S-S_\varepsilon$ attains a positive local maximum at some $(y_e,s_e)\in\Omega_\beta^\varepsilon$.
Set
$$
\tau:=s+\beta y,\qquad \zeta:=s-\beta y,
$$
so that $s=\tfrac{\tau+\zeta}{2}$ and $y=\tfrac{\tau-\zeta}{2\beta}$, hence
$$
\partial_\tau=\tfrac12\partial_s+\tfrac{1}{2\beta}\partial_y,\qquad
\partial_\zeta=\tfrac12\partial_s-\tfrac{1}{2\beta}\partial_y.
$$
Consider the ray
$$
m_e:=\{(y,s)=(y_e,s_e)+\theta\,(1/\beta,1)\ :\ \theta\in\mathbb{R}\}.
$$
Along $m_e$, one has $\frac{d}{d\theta}\zeta=0$ and $\frac{d}{d\theta}\tau=2$, so the restriction
$\theta\mapsto F\big((y_e,s_e)+\theta(1/\beta,1)\big)$ (with $F:=S-S_\varepsilon$) is a one–variable function in the $\tau$–direction.
Along $m_e$, we have
$$
\frac{d}{d\theta}F=\tfrac1\beta\,\partial_y F+\partial_s F=2\,\partial_\tau F,\qquad
\frac{d^2}{d\theta^2}F=4\,\partial_\tau^2 F.
$$
Since $(y_e,s_e)$ is a local maximum of $F$, the restriction has a local maximum at $\theta=0$, hence
$$
\partial_\tau F(y_e,s_e)=0,\qquad \partial_\tau^2 F(y_e,s_e)\le 0.
$$
Moreover, $\nabla F(y_e,s_e)=0$, so
$$
\partial_\sigma F(y_e,s_e)=\tfrac12\partial_sF-\tfrac{1}{2\beta}\partial_yF=0.
$$

At $(y_e,s_e)$ we thus have
$$
\partial_\tau(S-S_\varepsilon)=0,\qquad \partial_\zeta(S-S_\varepsilon)=0,\qquad
\partial_\tau^2(S-S_\varepsilon)\le 0.
$$
Applying \eqref{eq:tau-sigma-identity} to $S$ and the subsolution inequality \eqref{sub} to $S_\varepsilon$, we obtain at $(y_e,s_e)$:
$$
\begin{aligned}\label{diff eq}
(\partial_\tau^2-\partial_\zeta^2)(S-S_\varepsilon)
&= 2^{-p+1}p(1-\beta^2)\Big(S^{p-1}\partial_s S - S_\varepsilon^{p-1}\partial_s S_\varepsilon\Big)\\
&= 2^{-p+1}p(1-\beta^2)\Big(\partial_s S\,(S^{p-1}-S_\varepsilon^{p-1})
+ S_\varepsilon^{p-1}\,\partial_s(S-S_\varepsilon)\Big).
\end{aligned}
$$
But at the local maximum $\partial_s(S-S_\varepsilon)(y_e,s_e)=0$, and since $S(y_e,s_e)>S_\varepsilon(y_e,s_e)$, the remaining term on the right-hand side in \eqref{diff eq} is positive. This contradicts the fact that, at a local maximum, the left–hand side in \eqref{diff eq} is nonpositive in the $\tau$–direction (we have $\partial_\tau^2(S-S_\varepsilon)\le 0$ and $\partial_\zeta(S-S_\varepsilon)=0$ there). Therefore $S\le S_\varepsilon$ in $\Omega_\beta^\varepsilon$. 

Finally, letting $\varepsilon\downarrow 0$ yields $w_\phi+w_\psi=S\le \lim_{\varepsilon\to 0}S_\varepsilon=V_{\phi,\beta}+V_{\psi,\beta}$ in $\Omega_\beta$.
\end{proof}

Having established the comparison between the solutions in Lemma \ref{lem7.6}, we now prove the exact equality.
\begin{lemma}\label{lem7.7}
Assume A\ref{A1}--A\ref{A5}. Then, we have
$$
w_\phi + w_\psi = V_{\phi, \beta} + V_{\psi, \beta} \quad \text{in } \Omega_\beta.
$$
\end{lemma}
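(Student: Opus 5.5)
Lemma \ref{lem7.6} already gives the inequality $S:=w_\phi+w_\psi\le V_{\phi,\beta}+V_{\psi,\beta}=:S_\beta$ in $\Omega_\beta$. The plan is to prove the reverse inequality with a comparison argument that mirrors Lemma \ref{lem7.6}, using a supersolution-type barrier placed below $S$ instead of above it. I will combine this with the asymptotic information from Lemma \ref{lem5.5} and the monotonicity from Lemma \ref{c7.5}.

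\textbf{Step 1: a lower barrier.} For $\varepsilon>0$, consider the shifted profile
\[
\underline S_\varepsilon(y,s):=(C_{\phi,\beta}+C_{\psi,\beta})\,(\beta y+\varepsilon-s)^{-q}.
\]
It blows up on the line $s=\beta y+\varepsilon$, which lies strictly above $l_\beta$. Hence $\underline S_\varepsilon$ is finite and smooth on $\Omega_\beta$, and it solves the same identity \eqref{eq:tau-sigma-identity} with equality. Since $S_\beta$ is the $\varepsilon\downarrow0$ limit of $\underline S_\varepsilon$, it suffices to show $S\ge\underline S_\varepsilon$ in $\Omega_\beta$ for every $\varepsilon>0$.

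\textbf{Step 2: boundary behaviour.} On the part of $\partial\Omega_\beta$ lying on $l_\beta$ (for $y<0$), $S=+\infty$ because $\tilde T_0(y)=\beta y$ there, while $\underline S_\varepsilon$ stays bounded. So $S-\underline S_\varepsilon\to+\infty$ there.

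On the other boundary ray $s=\beta^{-1}y$, and at infinity in $\Omega_\beta$, I will use Lemma \ref{lem5.5}. Translating along $(1,\beta)$, $S$ converges to $S_\beta$. Combined with the monotonicity $\partial_\sigma w_\phi,\partial_\sigma w_\psi\ge0$ of Lemma \ref{c7.5}, this gives $S(y,s)\ge\lim_{\lambda\to\infty}S(y-\lambda,s-\beta\lambda)=S_\beta(y,s)>\underline S_\varepsilon(y,s)$. I would first try to turn this into the direct argument: moving in the $-\sigma$ direction decreases $S$, and the limit along that direction is exactly $S_\beta$. If that works, it yields $S\ge S_\beta$ pointwise in all of $\Omega_\beta$ with no maximum principle needed. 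Together with Lemma \ref{lem7.6}, this is the claim.

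\textbf{Step 3: fallback, and the main obstacle.} The delicate point in Step 2 is the direction of the monotonicity. A point $(y-\lambda,s-\beta\lambda)$ is reached from $(y,s)$ by moving backwards along $(1,\beta)$, which is parallel to $\partial_r$ rather than $\partial_\sigma$ when $\beta\neq1$. So Lemma \ref{c7.5} may not apply directly along that translation.

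If this fails, I will run the minimum-principle argument from Lemma \ref{lem7.6} in reverse. Suppose $F:=S-\underline S_\varepsilon$ has a negative interior minimum. There $\nabla F=0$ and $\partial_\tau^2F\ge0$. Using \eqref{eq:tau-sigma-identity} for both functions, the right-hand side equals $2^{-p+1}p(1-\beta^2)\partial_sS\,(S^{p-1}-\underline S_\varepsilon^{p-1})<0$, which gives the contradiction. The boundary data at infinity would come from the decay $S,\underline S_\varepsilon\to0$ as $d_0\to\infty$, supplied by \eqref{eq:4.27}--\eqref{eq:4.28}.

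I expect the main obstacle to be controlling $F$ on the ray $s=\beta^{-1}y$, where neither function blows up. There I will use the comparison with $V_{\psi,\beta}$, $V_{\phi,\beta}$ coming from Lemma \ref{lem5.5} together with the concavity of $\tilde T_0$.
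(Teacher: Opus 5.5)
Your Step 2 ``direct argument'' is exactly the paper's proof, and it works. Lemma \ref{c7.5} gives $\partial_\sigma w_\phi,\partial_\sigma w_\psi\ge 0$, and the translation limit of Lemma \ref{lem5.5} then yields $w_\phi\ge V_{\phi,\beta}$ and $w_\psi\ge V_{\psi,\beta}$. Adding these and combining with Lemma \ref{lem7.6} gives the equality.

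The doubt you raise in Step 3 comes from a miscalculation. In $(y,s)$ coordinates the displacement $-\lambda(1,\beta)$ corresponds to the derivative $\partial_y+\beta\partial_s=\partial_\sigma$, the tangent direction of $l_\beta$. By contrast, $\partial_r=\partial_s+\beta\partial_y$ differentiates along $(\beta,1)$. Hence $\frac{d}{d\lambda}w_\phi(y-\lambda,s-\beta\lambda)=-\partial_\sigma w_\phi\le 0$, exactly as you need.

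Two routine checks complete the argument:
\begin{itemize}
\item Points of $\Omega_\beta$ have $y<0$, so the backward ray stays in $\{y<0,\ s<\beta y\}\subset\tilde\Omega_0$, where Lemma \ref{c7.5} applies.
\item You only need $w_\phi(y,s)\ge w_\phi(y-\lambda_n,s-\beta\lambda_n)\to V_{\phi,\beta}(y,s)$ along the subsequence of Lemma \ref{lem5.5}, using that $V_{\cdot,\beta}$ is invariant under this translation.
\end{itemize}

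So commit to Step 2 and drop the fallback. It would not work anyway. At an interior minimum of $F$ both $\partial_\tau^2F\ge 0$ and $\partial_\sigma^2F\ge 0$, so $(\partial_\tau^2-\partial_\sigma^2)F$ has no definite sign and no contradiction follows; a wave operator has no interior-extremum principle. The fallback also leaves the behaviour of $F$ on the ray $s=\beta^{-1}y$ uncontrolled.
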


\begin{proof}
Lemma \ref{c7.5} gives $\partial_\sigma w_\phi, \partial_\sigma w_\psi \ge 0$ in $\tilde{\Omega}_0$, whereas  $\partial_\sigma V_{\phi,\beta}=\partial_\sigma V_{\psi,\beta}=0$.\\ By Lemma \ref{lem5.5}, there exists a  sequence $\{\lambda_n\}_{n\ge 1}$
 such that
$$
w_{\phi,\lambda_n}\to V_{\phi,\beta},\qquad w_{\psi,\lambda_n}\to V_{\psi,\beta}\quad\text{locally uniformly in }\tilde{\Omega}_0.
$$
Passing to the limit along $\lambda_n$ in the monotonicity $\partial_\sigma w_\phi\ge 0$, $\partial_\sigma w_\psi\ge 0$ and using $\partial_\sigma V_{\cdot,\beta}\equiv 0$ yields the pointwise inequalities
\begin{equation}\label{7.12}
w_\phi \ge V_{\phi,\beta},\qquad w_\psi \ge V_{\psi,\beta}\quad\text{in }\tilde{\Omega}_0.
\end{equation}
Combining \eqref{7.12} with Lemma \ref{lem7.6} (which gives $V_{\phi,\beta}+V_{\psi,\beta}\ge w_\phi+w_\psi$ in $\Omega_\beta$) forces
$$
w_\phi+w_\psi=V_{\phi,\beta}+V_{\psi,\beta}\quad\text{in }\Omega_\beta,
$$
as claimed.
\end{proof}
With the sum equality established, we now prove the individual component equalities:
\begin{lemma}\label{lem7.8}
Assume A\ref{A1}--A\ref{A5}. Then, we have
$$
w_\phi = V_{\phi, \beta}, \quad w_\psi = V_{\psi, \beta} \quad \text{in } \tilde{\Omega}_\beta,
$$
where $\tilde{\Omega}_\beta$ is defined in \eqref{ob}.

\end{lemma}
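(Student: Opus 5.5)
The plan is to upgrade the equality of sums from Lemma \ref{lem7.7} to equality of the individual components. I will use the characteristic structure of the limiting system satisfied by $(w_\phi,w_\psi)$, namely
\[
D_-w_\phi=2^{-p}(w_\phi+w_\psi)^p,\qquad D_+w_\psi=2^{-p}(w_\phi+w_\psi)^p .
\]
Since $w_\phi+w_\psi=S_\beta:=V_{\phi,\beta}+V_{\psi,\beta}$ in $\Omega_\beta$, the right-hand sides are identical to those of the explicit similarity solution \eqref{eq:5.3} there. Setting $z_\phi:=w_\phi-V_{\phi,\beta}$ and $z_\psi:=w_\psi-V_{\psi,\beta}$, I obtain the homogeneous transport equations
\[
D_-z_\phi=0,\qquad D_+z_\psi=0,\qquad z_\phi+z_\psi=0 \quad\text{in }\Omega_\beta .
\]
By \eqref{7.12}, both differences are nonnegative, $z_\phi\ge 0$ and $z_\psi\ge 0$, in $\tilde\Omega_0$. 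Together with $z_\phi+z_\psi=0$, this forces $z_\phi=z_\psi=0$ pointwise in $\Omega_\beta$. Strictly speaking, the transport equations are not even needed for this step; the sign information alone suffices.

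The real content is extending the equality from the wedge $\Omega_\beta$ to the larger region $\tilde\Omega_\beta$. For this I use the transport equations.

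\begin{itemize}
\item $z_\phi$ is constant along the lines $y+s=\mathrm{const}$.
\item $z_\psi$ is constant along the lines $y-s=\mathrm{const}$.
\item Every point of $\tilde\Omega_\beta$ that lies in the domain of the solution $(w_\phi,w_\psi)$ can be joined to $\Omega_\beta$ along such a characteristic, traced backward or forward within $\tilde\Omega_0$.
\end{itemize}

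Because $0<\beta<1$, the lines of slope $\pm1$ are steeper than the boundary ray $s=\beta y$ and than $s=\beta^{-1}y$. A geometric check should then show that each characteristic through a point of $\tilde\Omega_\beta$ enters $\Omega_\beta$, on which both $z$'s vanish. Hence $z_\phi\equiv 0$ and $z_\psi\equiv 0$ on all of $\tilde\Omega_\beta$.

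I expect this geometric verification to be the main obstacle. Before relying on the transport argument, I must confirm that $w_\phi$ and $w_\psi$ are actually defined and $\mathcal C^1$ along the whole characteristic segment, i.e.\ that the segment stays inside $\tilde\Omega_0$. I also need that the comparison \eqref{7.12} holds throughout $\tilde\Omega_\beta$, not just on $\Omega_\beta$.

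If some characteristic exits the domain before reaching $\Omega_\beta$, I will fall back on a second argument, in the spirit of Lemma \ref{lem7.6}. Both $w_\phi+w_\psi$ and $S_\beta$ satisfy the same second-order equation $(\partial_s^2-\partial_y^2)S=2^{1-p}pS^{p-1}\partial_sS$. I then propagate the equality of the sums by the maximum-principle argument, and afterwards separate the components again using $z_\phi,z_\psi\ge0$.
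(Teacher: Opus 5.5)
Your first step is correct, and cleaner than anything the paper states. By \eqref{7.12} and Lemma~\ref{lem7.7}, $z_\phi,z_\psi\ge 0$ and $z_\phi+z_\psi=0$ on $\Omega_\beta$, so both vanish there.

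The gap is the extension to $\tilde\Omega_\beta$. The homogeneous equations $D_-z_\phi=0$, $D_+z_\psi=0$ hold only on $\Omega_\beta$. In general $D_-z_\phi=D_+z_\psi=2^{-p}\bigl[(w_\phi+w_\psi)^p-(V_{\phi,\beta}+V_{\psi,\beta})^p\bigr]$, and knowing that this vanishes off $\Omega_\beta$ is equivalent to what you are trying to prove. So $z_\phi$ is constant only on the portion of a line $y+s=\mathrm{const}$ that lies inside the wedge. Following a characteristic from $P\in\tilde\Omega_\beta\setminus\Omega_\beta$ into $\Omega_\beta$ therefore yields nothing as written, and allowing the tracing to go ``backward or forward'' makes it worse.

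The missing ingredient is one-sided information. By \eqref{7.12}, $w_\phi+w_\psi\ge V_{\phi,\beta}+V_{\psi,\beta}$ on $\tilde\Omega_0$, hence $D_-z_\phi\ge 0$ and $D_+z_\psi\ge 0$. So each $z$ is nondecreasing \emph{forward} in $s$ along its characteristic, and only forward tracing gives $0\le z(P)\le z(Q)=0$.

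\emph{The case of $z_\phi$.} This works: $\tilde\Omega_\beta\subset\{y+s<0\}$, and the forward $D_-$ ray from $P$ on $y+s=c$ stays in $\tilde\Omega_\beta$. It enters $\Omega_\beta$ once $y$ drops below $c\beta/(1+\beta)$. This is exactly the paper's argument, which integrates along the forward $\phi$-characteristic from $(y,s)$ to $(y_1,s_1)\in\Omega_\beta$ and contradicts Lemma~\ref{lem7.7}; off $\Omega_\beta$ the integrands there are ordered, not equal.

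\emph{The case of $z_\psi$.} Here it cannot work. Contrary to your remark, the lower edge $s=\beta^{-1}y$ is steeper than slope $1$, so $D_+$ characteristics \emph{exit} $\Omega_\beta$ going forward. Consequently, from no point of $\tilde\Omega_\beta\setminus\Omega_\beta$ does the forward $D_+$ ray reach the wedge; for $y\ge 0$ this is trivial since $\Omega_\beta\subset\{y<0\}$. Backward tracing only returns $z_\psi\ge 0$.

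The correct conclusion is this: $z_\phi\equiv 0$ on the open set $\tilde\Omega_\beta$ forces $0=D_-z_\phi=2^{-p}\bigl[(w_\phi+w_\psi)^p-(V_{\phi,\beta}+V_{\psi,\beta})^p\bigr]$ there. Hence the sums coincide on $\tilde\Omega_\beta$ and $z_\psi=-z_\phi=0$. This, rather than Lemma~\ref{lem7.7}, is also what justifies the paper's final line.

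Your maximum-principle fallback does not rescue the step. The operator $\partial_s^2-\partial_y^2$ is hyperbolic, and you have no boundary data on $\tilde\Omega_\beta\setminus\Omega_\beta$ from which to propagate equality of the sums.
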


\begin{proof}
Let $(y,s)\in \tilde{\Omega}_\beta\setminus \Omega_\beta$ and choose $(y_1,s_1),(y_2,s)\in\Omega_\beta$ such that the two characteristic segments from $(y,s)$ to $(y_1,s_1)$ have the slopes $\pm\beta$:
$$
\frac{s_1-s}{y_1-y_2}=\frac{1}{\beta}\quad\text{and}\quad \frac{s_1-s}{y_1-y}=-\,\frac{1}{\beta}.
$$
The functions $w_\phi,w_\psi$, $V_{\phi,\beta}$ and $V_{\psi,\beta}$ solve the  system
$$
\partial_s w_\phi-\beta\partial_y w_\phi=2^{-p}(w_\phi+w_\psi)^p,\qquad
\partial_s w_\psi+\beta\partial_y w_\psi=2^{-p}(w_\phi+w_\psi)^p,
$$
(and the same holds with $w$ being replaced by $V_\beta$). Thus, integrating along the characteristics gives the sum representation
$$
\begin{aligned}
(w_\phi+w_\psi)(y_1,s_1)
&=w_\phi(y,s)+w_\psi(y_2,s)
\\&\quad+\int_{0}^{s_1-s}\!2^{-p}\,(w_\phi+w_\psi)^p\!\bigl(y_1-\beta(s_1-s-r),\,s+r\bigr)\,dr
\\&\quad+\int_{0}^{s_1-s}\!2^{-p}\,(w_\phi+w_\psi)^p\!\bigl(y_1+\beta(s_1-s-r),\,s+r\bigr)\,dr,
\end{aligned}
$$
and an analogous identity stands with $w$ replaced by $V_\beta$.\\
Now, if $w_\phi(y,s)\neq V_{\phi,\beta}(y,s)$, then by \eqref{7.12} we have $w_\phi(y,s)>V_{\phi,\beta}(y,s)$.\\ Using also $w_\psi\ge V_{\psi,\beta}$ and the fact (from Lemma \ref{lem7.7}) that
$$
(w_\phi+w_\psi)^p=(V_{\phi,\beta}+V_{\psi,\beta})^p\quad\text{in }\Omega_\beta,
$$
the two characteristic formulas yield
$$
(w_\phi+w_\psi)(y_1,s_1)>(V_{\phi,\beta}+V_{\psi,\beta})(y_1,s_1),
$$
contradicting Lemma \ref{lem7.7}. Hence,
\begin{equation}\label{7.13}
w_\phi=V_{\phi,\beta}\quad\text{in }\tilde{\Omega}_\beta.
\end{equation}
Finally, from Lemma \ref{lem7.7}, we obtain the following identity
$$
w_\phi+w_\psi=V_{\phi,\beta}+V_{\psi,\beta}\quad\text{in }\tilde{\Omega}_\beta,
$$
that we combine with \eqref{7.13} yielding thus  $w_\psi=V_{\psi,\beta}$ in $\tilde{\Omega}_\beta$ as well.
\end{proof}
Now, we complete the proof of Theorem \ref{th:3.6} by combining our previous results.

\begin{proof}[Proof of Theorem \ref{th:3.6}]
Following the same methodology used for $\beta$, we can extract a subsequence that realizes the $\alpha$-slope (\textit{i.e.}, $
T_{0,\lambda}(y)\ \xrightarrow[\lambda\to 0^+]{}\ \alpha\,y,\qquad y>0.
$), and obtain the limiting functions
$$
w_\phi = V_{\phi, \alpha} \quad \text{and} \quad w_\psi = V_{\psi, \alpha} \quad \text{in } \tilde{\Omega}_\alpha,
$$
where, as before, $\tilde{\Omega}_\alpha := \{(y,s): s<\alpha y\}$.\\
If $\alpha \neq \beta$, then on the nonempty intersection
$$
\emptyset \neq \tilde{\Omega}_\alpha \cap \tilde{\Omega}_\beta
=\{(y,s): s<\min(\alpha y,\beta y)\},
$$
we would simultaneously have
$$
w_\phi = V_{\phi,\alpha}=C_{\phi,\alpha}(\alpha y-s)^{-q},
\quad\text{and}\quad
w_\phi = V_{\phi,\beta}=C_{\phi,\beta}(\beta y-s)^{-q},
$$
(and similarly for $w_\psi$). Since the functions $(\alpha y-s)^{-q}$ and $(\beta y-s)^{-q}$ are distinct on any open set when $\alpha\neq\beta$, equality on the open set $\tilde{\Omega}_\alpha\cap\tilde{\Omega}_\beta$ is impossible unless $\alpha=\beta$.\\
Therefore,
$$
\alpha=\beta,
$$
and, in particular,
$$
\inf_{\lambda_n} \frac{T_0(\lambda_n y)}{\lambda_n y}
= \alpha = \beta
= \sup_{\lambda_n} \frac{T_0(\lambda_n y)}{\lambda_n y}.
$$
This equality of infimum and supremum implies the linearity of $T_0$:
$$
T_0(y) = \alpha y, \quad \text{for all} \ \ y \in \mathbb{R}.
$$
Finally, combining the two characteristic identities for $(w_\phi+w_\psi)$ and $(V_{\phi,\beta}+V_{\psi,\beta})$  in Lemma~\ref{lem7.8} with  Lemma~\ref{lem7.7} and  \eqref{7.12}–\eqref{7.13}, and using the scaling limit $T_{0,\lambda}(y)\xrightarrow[\lambda\downarrow 0]{}\beta y$, we obtain $w_\phi=V_{\phi,\beta}$ and $w_\psi=V_{\psi,\beta}$ in $\tilde{\Omega}_\beta$. Hence, $T_0(y)=\beta y$ for all $y$.
This completes the proof of Theorem \ref{th:3.6}.
\end{proof}
\section{Proof of the Continuous Differentiability of the Blow-up Curve}
\label{section8}

This section is devoted to the proof of Theorem \ref{thm3.4}. We start by demonstrating that $T$ is differentiable on $B_{R^*}$ which is defined by \eqref{eq:ball}. For that purpose, we introduce, for the variables $(x,t)$, the following directional derivative in the direction of $\theta$:
$$
D_\theta := \cos\theta\,\partial_x + \sin\theta\,\partial_t,
$$
and for the rescaled variables $(y,s)$ we use the same notation, 
$$
D_\theta := \cos\theta\,\partial_y + \sin\theta\,\partial_s.
$$
If $\phi_\lambda(y,s)=\lambda^{q}\phi(x_0+\lambda y,T(x_0)+\lambda s)$, then
\begin{equation}\label{eq:scaling-Dtheta}
D_\theta \phi_\lambda(y,s)
= \lambda^{q+1}\, \bigl(D_\theta \phi\bigr)\bigl(x_0+\lambda y,\,T(x_0)+\lambda s\bigr),
\end{equation}
making the signs of $D_\theta\phi_\lambda$ and  $D_\theta\phi$ coincide under the blow-up scaling.

We now proceed by contradiction. Suppose, for that purpose, that there exists $x_0 \in B_{R^*}$ such that $T$ is not differentiable at $x_0$. Then, by the arguments in Section \ref{sec6.2} (mainly the fact that there exist two different limiting slopes from the non differentiability), there exist sequences $\{\lambda_n^{(1)}\}_{n\ge 1},$ $\{\lambda_n^{(2)}\}_{n\ge 1}
$ and constants $\alpha_1 < \alpha_2$ such that
$$
\phi_{\lambda_n}^{(1)} \to V_{\phi,\alpha_1}, \quad \text{and} \quad \phi_{\lambda_n}^{(2)} \to V_{\phi,\alpha_2},
$$
as $\lambda_n^{(1)}, \lambda_n^{(2)} \to 0$, locally uniformly in $\Omega_0$, with corresponding blow-up curves
$$
T_0^{(1)}(y) = \alpha_1 y, \quad \text{and} \quad T_0^{(2)}(y) = \alpha_2 y, \quad \text{for } y \in \mathbb{R}.
$$
Let $\theta_{\alpha_j} = \arctan \alpha_j$ for $j=1,2$. Without loss of generality, assume $0 \leq \theta_{\alpha_1} < \theta_{\alpha_2} < \pi$ and choose $0 < \varepsilon < \pi/2$ such that $\theta_{\alpha_1} + \varepsilon < \theta_{\alpha_2} - \varepsilon$.\\ For simplicity, we take $0 \leq \theta_{\alpha_1} < \theta_{\alpha_2} < \pi/2$.\\
A direct computation gives
\begin{equation}\label{eq:Vphi-Dtheta-sign}
\begin{aligned}
D_\theta V_{\phi,\alpha}(y,s)
&= q\,C_{\phi,\alpha}\,\bigl(\alpha y-s\bigr)^{-q-1}\,\bigl(\sin\theta-\alpha\cos\theta\bigr)
\\ &= q\,C_{\phi,\alpha}\sqrt{1+\alpha^2}\,\bigl(\alpha y-s\bigr)^{-q-1}\,\sin(\theta-\theta_\alpha).
\end{aligned}
\end{equation}
Hence $D_\theta V_{\phi,\alpha}>0$ whenever $\theta\in(\theta_\alpha,\theta_\alpha+\pi)$ and $s<\alpha y$.\\
Moreover, we have
\begin{equation}\label{eq:pi-antipodal}
D_{\theta+\pi}V_{\phi,\alpha}=-D_\theta V_{\phi,\alpha}.
\end{equation}
Define the angular sectors for $j=1,2$:
$$
S_j^\varepsilon = \{\theta_j : \theta_{\alpha_j} + \varepsilon < \theta_j < \theta_{\alpha_j} + \pi - \varepsilon\}.
$$
By \eqref{eq:Vphi-Dtheta-sign}, $D_{\theta_j}V_{\phi,\alpha_j}>0$ for every $\theta_j\in S_j^\varepsilon$.\\ Since the positivity margin in \eqref{eq:Vphi-Dtheta-sign} is uniform once we stay a fixed distance $\varepsilon$ away from the boundary angles and restrict to a small ball below both lines $s=\alpha_j y$, there exist $\varepsilon'>0$ and $\rho>0$ such that
$$
D_{\theta_1}V_{\phi,\alpha_1} > 2\varepsilon' \quad \text{and} \quad D_{\theta_2}V_{\phi,\alpha_2} > 2\varepsilon',
$$
in $\Omega_{1,0} \cap \Omega_{2,0} \cap B_\rho(0,0)$, where
$$
\Omega_{j,0} = \{(y,s) \in \mathbb{R}^2 : s < \alpha_j y\}, \quad j=1,2,
$$
and
$$
B_\rho(y',s') = \{(y,s) \in \mathbb{R}^2 : \sqrt{(y-y')^2 + (s-s')^2} < \rho\}.
$$
Furthermore, by the $\mathcal{C}^1_{\mathrm{loc}}-$convergence in the scaling variables, there exists $n_0 \in \mathbb{N}$ such that
$$
\Omega_{\lambda_{n_0}}^{(1)} \cap \Omega_{\lambda_{n_0}}^{(2)} \cap \Omega_{1,0} \cap \Omega_{2,0} \cap B_\rho(0,0) \neq \emptyset,
$$
and
$$
|D_{\theta_1}\phi_{\lambda_{n_0}}^{(1)} - D_{\theta_1}V_{\phi,\alpha_1}| \leq \varepsilon', \quad |D_{\theta_2}\phi_{\lambda_{n_0}}^{(2)} - D_{\theta_2}V_{\phi,\alpha_2}| \leq \varepsilon'.
$$
Here
$$
\Omega_\lambda = \{(y,s) \in \mathbb{R}^2 : (x_0 + \lambda y, T(x_0) + \lambda s) \in \Omega\}.
$$
Therefore,  the $\mathcal{C}^1-$convergence of $\phi_{\lambda_n^{(j)}}$ to $V_{\phi,\alpha_j}$, on any compact subset of $\Omega_{j,0}$, follows from Section~\ref{sec6.2}. Indeed, the nonempty intersection of the domains holds for $n_0$ large since all sets are open and the lines $s=\alpha_j y$ pass through the origin.\\
Using \eqref{eq:scaling-Dtheta}, we deduce that 
\begin{equation}\label{6.1}
D_\theta\phi > 0, \quad \text{for } \theta \in S_\varepsilon^{(1)} \cup S_\varepsilon^{(2)},
\end{equation}
inside the subset 
$$
\Omega_{x_0,n_0}^* = \Omega \cap B_{\min\{\lambda_{n_0}^{(1)}, \lambda_{n_0}^{(2)}\}\rho}(x_0,T(x_0)) \cap \Omega_1^* \cap \Omega_2^*,
$$
where $\Omega_j^* = \{(x,t) \in \mathbb{R}^2 : t - T(x_0) < \alpha_j(x - x_0)\}$ for $j=1,2$.\\
In particular, for $\theta^* \in (\theta_{\alpha_1} + \varepsilon, \theta_{\alpha_2} - \varepsilon) \subset S_\varepsilon^{(1)}$, we have
\begin{equation}\label{6.2}
D_{\theta^*}\phi > 0 \quad \text{in } \Omega_{x_0,n_0}^*.
\end{equation}
Moreover, since $\theta^* + \pi \in (\theta_{\alpha_1} + \pi + \varepsilon, \theta_{\alpha_2} + \pi - \varepsilon) \subset S_\varepsilon^{(2)}$,
\begin{equation}\label{6.3}
D_{\theta^* + \pi}\phi > 0 \quad \text{in } \Omega_{x_0,n_0}^*.
\end{equation}
However, \eqref{6.2} and \eqref{6.3} contradict the identity $D_{\theta^*}\phi = -D_{\theta^* + \pi}\phi$ in $\Omega_{x_0,n_0}^*$, which follows from the definition of $D_\theta$.\\ Therefore, $T$ must be differentiable at $x_0 \in B_{R^*}$.

We now establish the continuity of $T'$ in $B_{R^*}$ by contradiction.\\ Assume there exists $x_0 \in B_{R^*}$ where $T'$ is discontinuous. Let $\alpha_{x_0} = T'(x_0)$ and define the local domain:
$$
\Omega_{x_0,n}^{**} = \Omega \cap B_{\lambda_n\rho}(x_0,T(x_0)) \cap \Omega_{x_0}^{**},
$$
where 
$$
\Omega_{x_0}^{**} = \{(x,t) \in \mathbb{R}^2 : t - T(x_0) < \alpha_{x_0}(x - x_0)\}.
$$
By the discontinuity assumption, there exists $x_1 \in B_{R^*}$ (with $\alpha_{x_1} = T'(x_1)$) and $\varepsilon' > 0$ with $n_0, n_1 \in \mathbb{N}$ such that:
\begin{equation}\label{6.4}
\begin{split}
& 0<\varepsilon'<\frac{\pi}{2},\\
& \Omega_{x_0,n_0}^{**}\cap \Omega_{x_1,n_1}^{**}\neq \emptyset,\\
& \big|\theta_{\alpha_{x_1}}-\theta_{\alpha_{x_0}}\big|>2\varepsilon'.
\end{split}
\end{equation}
We choose $n_0,n_1$ large so that
$$
\Omega_{x_0,n_0}^{**}\cap \Omega_{x_1,n_1}^{**}\neq\emptyset,
$$
and $\mathcal{C}^1_{\text{loc}}$–closeness holds on the intersection:
$$
\bigl|D_{\theta_{\alpha_{x_0}}}\phi_{\lambda_{n_0}}^{(0)}-D_{\theta_{\alpha_{x_0}}}V_{\phi,\alpha_{x_0}}\bigr|\le \varepsilon',
\quad
\bigl|D_{\theta_{\alpha_{x_1}}}\phi_{\lambda_{n_1}}^{(1)}-D_{\theta_{\alpha_{x_1}}}V_{\phi,\alpha_{x_1}}\bigr|\le \varepsilon'.
$$
Here $\theta_{\alpha_{x_j}} = \arctan \alpha_{x_j}$ for $j=0,1$. Define the angular sectors for each point:
$$
S_{\varepsilon',x_j} = \{\theta_j : \theta_{\alpha_{x_j}} + \varepsilon' < \theta_j < \theta_{\alpha_{x_j}} + \pi - \varepsilon'\}, \quad j=0,1.
$$
Arguing similarly as the differentiability proof  (using the similarity limit with slope $\alpha_{x_j}$ and the $\mathcal{C}^1_{\mathrm{loc}}$ convergence in scaling coordinates around $(x_j,T(x_j))$), we obtain for each $j=0,1$:
$$
D_{\theta_j}\phi > 0 \quad \text{in} \quad \Omega_{x_j,n_j}^{**} \quad \text{for all } \theta_j \in S_{\varepsilon',x_j}.
$$
Assume without loss of generality that $0 \leq \theta_{\alpha_{x_0}} < \theta_{\alpha_{x_1}} < \pi/2$.\\ From \eqref{6.4}, we have
$$
\theta_{\alpha_{x_0}} + \varepsilon' < \theta_{\alpha_{x_1}} - \varepsilon'.
$$
Choosing any $\tilde{\theta}$ in the interval $(\theta_{\alpha_{x_0}} + \varepsilon', \theta_{\alpha_{x_1}} - \varepsilon')$ yields the following:
\begin{itemize}
\item[(i)] If  $\tilde{\theta} \in S_{\varepsilon',x_0}$ then $D_{\tilde{\theta}}\phi > 0$ in $\Omega_{x_0,n_0}^{**}$,
\item[(ii)] If   $\tilde{\theta} + \pi \in S_{\varepsilon',x_1}$ then $ D_{\tilde{\theta}+\pi}\phi > 0$ in $\Omega_{x_1,n_1}^{**}$.
\end{itemize}
On the nonempty intersection $\Omega_{x_0,n_0}^{**}\cap \Omega_{x_1,n_1}^{**}$, both inequalities hold simultaneously, which contradicts the fundamental identity,
$$
D_{\tilde{\theta}+\pi}\phi = -D_{\tilde{\theta}}\phi \quad \text{in} \quad \Omega.
$$
Hence, this implies that $T'$ must be continuous at every $x_0 \in B_{R^*}$.

This completes the proof of $\mathcal{C}^1-$regularity for the blow-up curve $T$, and Theorem \ref{thm3.4} is thus proven.
\section*{Acknowledgments}
 This work was initiated in 2023 when Takiko Sasaki visited the University Sorbonne Paris Nord, as an Invited Professor of the ``\'Ecole Universitaire de Recherche'' (EUR) ``Paris Nord Graduate School in Mathematics and Computer Science'' (PNGS-M$\And$CS), with support from the French State Program "Investissement d’Avenir", managed by the "Angence Nationale de la Recherche" under the grant ANR-18-EURE-0024. She wishes to thank the EUR PNGS-M$\And$CS together the Laboratoire Analyse, Géométrie et Applications (LAGA) for their hospitality.\\
A part of this work was carried out during the visit of Firas Kaabi to LAGA in 2024. He would like to thank LAGA for its hospitality and support during this visit.\\
Hatem Zaag wishes to thank Pierre Raphaël and
the ”SWAT” ERC project for their support.

\section*{Declarations}
\subsection*{Ethical Statement}
This study was conducted in accordance with ethical guidelines and regulations. The research did not involve any human participants, human data, or animals. 
\subsection*{Competing interests} The authors declare no competing interests.
\section*{Consent for Publication}
\subsection*{Funding} Not Applicable.
\subsection*{Data Availability Statement}
All data underlying the results are available as part of the article and no additional source data are required.

\end{document}